\newtheorem{thm} {Theorem} [section]
\newtheorem{prop}{Proposition} [section]
\newtheorem{lem} {Lemma} [section]
\newtheorem{lemgl} {Lemma}
\newtheorem{corgl} {Corollary}
\newtheorem{cornn}{Corollary}
\newtheorem{notnn}{Notation}
\theoremstyle{definition}
\newtheorem{rem} {Remark} [section]
\newtheorem{rems} [rem]{Remarks}
\newtheorem{remnn}{Remark}
\newtheorem{remsnn}{Remarks}
\newcommand{\mf}{\mathfrak}
\newcommand{\mc}{\mathcal}
\newcommand{\mb}{\mathbb}
\newcommand{\nts}{\negthinspace}     
\newcommand{\Nts}{\nts\nts}
\newcommand{\ov}{\overline}
\newcommand{\sm}{\setminus}         
\newcommand{\la}{\langle}
\newcommand{\ra}{\rangle}
\newcommand{\Hom}{{\rm Hom}}        
\newcommand{\End}{{\rm End}}
\newcommand{\Mat}{{\rm Mat}}
\newcommand{\Ext}{{\rm Ext}}
\newcommand{\Tor}{{\rm Tor}}
\newcommand{\Sym}{{\rm Sym}} 
\newcommand{\Ker}{{\rm Ker}}
\newcommand{\id}{{\rm id}}
\let\ttie\t
\newcommand{\tie}[1]{{\let\t\ttie \ttie#1}}
\renewcommand{\t}{\mf{t}}  
\newcommand{\GL}{{\rm GL}}
\newcommand{\Ort}{{\rm O}}
\newcommand{\SO}{{\rm SO}}
\newcommand{\Sp}{{\rm Sp}} 
\newcommand{\se}{\epsilon}
\newcommand{\e}{\varepsilon}
\newcommand{\schi}{\chi_{\hbox{${}_0$}}}
\newcommand{\rot}{\rotatebox}
\newcommand{\Max}{{\rm Max}}
\newcommand{\Grot}{{\rm Grot}}
\newcommand{\ch}{{\rm ch}}
\begin{document}

\title{The Brauer algebra and the symplectic Schur algebra}

\begin{abstract}
Let $k$ be an algebraically closed field of characteristic $p>0$, let $m,r$ be integers with $m\ge1$, $r\ge0$ and $m\ge r$ and let $S_0(2m,r)$ be the symplectic Schur algebra over $k$ as introduced by the first author. We introduce the symplectic Schur functor, derive some basic properties of it and relate this to work of Hartmann and Paget. We do the same for the orthogonal Schur algebra. We give a modified Jantzen sum formula and a block result for the symplectic Schur algebra under the assumption that $r$ and the residue of $2m$ mod $p$ are small relative to $p$. From this we deduce a block result for the orthogonal Schur algebra under similar assumptions.
We also show that, in general, the block relations of the Brauer algebra and the symplectic and orthogonal Schur algebra are the same. Finally, we deduce from the previous results a new proof of the description of the blocks of the Brauer algebra in characteristic $0$ as obtained by Cox, De Visscher and Martin.
\end{abstract}

\author{Stephen Donkin {\tiny and} Rudolf Tange}
\keywords{Brauer algebra, symplectic Schur algebra, Jantzen sum formula, Young modules, blocks}
\thanks{2000 {\it Mathematics Subject Classification}. 14L35, 05E15}

\maketitle

\section*{Introduction}
In characteristic zero the strong relationship between the representation theories of the general linear group and the symmetric group, is well-known; see e.g. \cite{Weyl}. In Green's monograph \cite{Green} a characteristic free approach to this is given, using the Schur functor as defined by Schur in his doctoral dissertation. In this work we give a systematic Lie theoretic approach to the representation theory of the Brauer algebra in the spirit of Green's monograph. Our approach was stimulated by a result of Cox, De Visscher and Martin \cite{CdVM2} expressing the blocks of the Brauer algebra in characteristic zero in terms of Weyl group orbits, and a desire to see this result in a Lie theoretic context. Throughout the paper we work over an algebraically closed field $k$.

The paper is organized as follows. In Section~\ref{s.prelim} we introduce the necessary notation, including the Brauer algebra $B_r=B_r(\delta)$ and for $n=2m$ even, the symplectic group $\Sp_n$ and the symplectic Schur algebra $S_0(n,r)$. Furthermore, we introduce Specht, permutation and Young modules for the Brauer algebra as in \cite{HarPag} and their twisted versions.

In Section~\ref{s.sympschur} we introduce the symplectic Schur functor $$f_0:{\rm mod}(S_0(n,r))\to{\rm mod}(B_r(-n))$$ and the inverse symplectic Schur functor $$g_0:{\rm mod}(B_r(-n))\to{\rm mod}(S_0(n,r)).$$ We assume that $m\ge r$ to ensure that the Brauer algebra $B_r(-n)$ can be identified with the endomorphism algebra $\End_{\Sp_n}(E^{\otimes r})$. Only in this situation can we expect the symplectic Schur functor to \lq\lq control" the representation theory of the Brauer algebra. 
The main results are Theorem~\ref{thm.sympschur} and Propositions~\ref{prop.sympmult} and \ref{prop.sympyoung}. These results express the link between the representation theories of the symplectic Schur algebra and the Brauer algebra. They will be needed for the results in Section~\ref{s.blocks}.

In Section~\ref{s.jantzen} we study the symplectic Schur algebra in the situation that ${\rm char}\,k=p>2$ and $r$ and the residue of $n$ mod $p$ are small relative to $p$. This section is independent of Section~\ref{s.sympschur}. We obtain a description of the blocks, Theorem~\ref{thm.sympschurblocks}, which is the same as the description of the blocks of the Brauer algebra in characteristic zero in \cite{CdVM2}. Our main tool is a strengthened version of the Jantzen Sum Formula, Theorem~\ref{thm.sumformula}.

In Section~\ref{s.orthschur} we obtain the orthogonal versions of the results in Section~\ref{s.sympschur}. We assume here that ${\rm char}\,k\ne2$. Since we are assuming that $n>2r$, we can work with the special orthogonal group and avoid working with the full orthogonal group. Mostly the arguments are the same as in the symplectic case, and in that case they are omitted. The results in this section are important since they allow us to pass to the (untwisted) Specht, permutation and Young modules for the Brauer algebra via a Schur functor.

In Section~\ref{s.blocks}, we use the results of the previous three sections to obtain block results. First we assume that ${\rm char}\,k=p>2$ and show that the block relations of the symplectic and orthogonal Schur algebras are the same as those of the corresponding Brauer algebras, Theorem~\ref{thm.sameblocks}. From this we deduce generic block results for the Brauer algebra and the orthogonal Schur algebra. In Subsection~\ref{ss.CdVM} we assume that ${\rm char}\,k=0$ and deduce the description of the blocks of the Brauer algebra \cite[Thm.~4.2]{CdVM2}. For this we use reduction mod $p$. The key point about working in positive characteristic $p$ is that we can then subtract multiples of $p$ from $\delta$ to get that $\delta-up=-n=-2m$, $m\ge r$, without changing the Brauer algebra: $B_r(\delta)=B_r(-n)$. In the situation that $m\ge r$ we can then exploit the relation between the representation theories of the symplectic Schur algebra $S_0(n,r)$ and the Brauer algebra $B_r(-n)$. For the reduction mod $p$ to work we need that, for a fixed integer $\delta$, the blocks of the Brauer algebra over a field of characteristic zero \lq\lq agree" with the blocks over a field of large prime characteristic. This is a very general fact, as is explained in Subsection~\ref{ss.redmodp}. The idea that characteristic zero theory is the limiting case of characteristic $p$ theory has also been used for the partition algebra in \cite{MaWo}.

Throughout this paper we will freely make use of the general theory of quasihereditary algebras, the theory of reductive groups and their representations and the representation theory of the symmetric group. For quasihereditary algebras (e.g. (co)standard module, $\nabla$-filtration) we refer to \cite[Appendix]{Don7}. For reductive groups and their representations (e.g. induced module, Weyl module, good filtration) we refer to \cite[Part II]{Jan}. Note that in \cite{Jan} the induced and Weyl module with highest weight $\lambda$ for a reductive group are denoted by $H^0(\lambda)$ and $V(\lambda)$ respectively. For the representation theory of the symmetric group (e.g. Specht module, permutation module, $p$-regular partition) we refer to \cite{James}. A definition of Young modules for the symmetric group can be found in \cite{smartin}.

\section{Preliminaries}\label{s.prelim}
\subsection{The Brauer algebra and the symplectic Schur algebra}\label{ss.sympschur}
Throughout the paper $k$ denotes an algebraically closed field.
Let $n=2m$ be an even integer $\ge2$. Let $i\mapsto i'$ be the involution  of $\{1,\ldots,n\}$ defined by $i':=n+1-i$. Set $\se_i=1$ if $i\le m$ and $\se_i=-1$ if $i>m$ and define the $n\times n$-matrix $J$ with coefficients in $k$ by $J_{ij}=\delta_{ij'}\se_i$. So

$$J=
\begin{bmatrix}
&&&&&1\\
&0&&&\rot{72}{$\ddots$}&\\
&&&1&&\\
&&-1&&&\\
&\rot{72}{$\ddots$}&&&0&\\
-1&&&&&
\end{bmatrix}.
$$
Let $E=k^n$ be the space of column vectors of length $n$ with standard basis $e_1,\ldots,e_n$.
On $E$ we define the nondegenerate symplectic form $\la\ ,\ \ra$ by
$$\la u,v\ra:=u^TJv=\sum_{i=1}^n\se_iu_iv_{i'}\ .$$
Then $\la e_i,e_j\ra=J_{ij}$. The {\it symplectic group} $\Sp_n=\Sp_n(k)$ is defined as the group of $n\times n$-matrices over $k$ that satisfy $A^TJA=J$, i.e. the invertible matrices for which the corresponding automorphism of $E$ preserves the form $\la\ ,\ \ra$. We denote the general linear group over $k$ by $\GL_n$ or $\GL_n(k)$. The vector space $E$ is the natural module for $\GL_n$ and for $\Sp_n$.

Let $r$ be an integer $\ge0$. For any $\delta\in k$ one has the Brauer algebra $B_r(\delta)$; see e.g. \cite{Br}, \cite{Brown1}, \cite{DorHanWal}, \cite{Wen} or \cite{CdVM1} for a definition. This also makes sense for $\delta$ an integer, since we can replace that integer by its natural image in $k$. Let $E^{\otimes r}$ be the $r$-fold tensor power of $E$. Then we have natural homomorphisms $k\Sym_r\to\End_{\GL_n}(E^{\otimes r})$ and $B_r(-n)\to\End_{\Sp_n}(E^{\otimes r})$. The action of the symmetric group $\Sym_r$ is by permutation of the factors, the action of $B_r(-n)$ is explained in \cite[p 192]{Wen}. Using classical invariant theory one can then show that these homomorphisms are surjective and that they are injective in case $n\ge r$ and $m\ge r$, respectively; see \cite{DeCProc} and \cite{T} (the proof of the surjectivity is Brauer's original argument \cite{Br}). In the case of the symplectic group these results were first obtained in arbitrary characteristic in \cite{Dotyetal} using different methods. Let $S(n,r)$ and $S_0(n,r)$ be the enveloping algebras in $\End(E^{\otimes r})$ of $\GL_n$ and $\Sp_n$ respectively. Then the natural embeddings $S(n,r)\to\End_{k\Sym_r}(E^{\otimes r})$ and $S_0(n,r)\to\End_{B_r(-n)}(E^{\otimes r})$ are isomorphisms; see \cite{Green} and \cite{T}. In the case of the symplectic group this result was first obtained in arbitrary characteristic by Oehms \cite{Oe}. The proof given in \cite{T} avoids the FRT-construction, but makes essential use of the bideterminant basis given in \cite[Thm.~6.1]{Oe}. The algebra $S(n,r)$ is the {\it Schur algebra} as introduced in \cite{Green} and we will call $S_0(n,r)$ the {\it symplectic Schur algebra}, it was first introduced in \cite{Don3}.

In \cite{Don1} {\it generalized Schur algebras} were introduced. These are quasi-hereditary finite dimensional algebras that are associated to a reductive group and a finite saturated set of weights. Let $T$ and $T_0$ be the maximal tori of $\GL_n$ and $\Sp_n$, respectively, that consist of diagonal matrices and let $B$ and $B_0$ be the Borel subgroups that consist of upper triangular matrices. Note that $T_0$ consists of those $t\in T$ which satisfy $t_it_{i'}=1$ for all $i\in\{1,\ldots,m\}$. Associated to a maximal torus and a Borel subgroup containing it one has a root datum and a choice of positive roots. We call the characters (multiplicative one-parameter subgroups) of a fixed maximal torus of a reductive group {\it weights}. Recall that a weight $\lambda$ is called {\it dominant} if $\la\lambda,\alpha^\vee\ra\ge0$ for every positive root $\alpha$.
We denote the set of weights of $\GL_n$ with respect to $T$ by $X$ and the set of weights of $\Sp_n$ with respect to $T_0$ by $X_0$. The groups $X$ and $X_0$ can be identified with $\mb Z^n$ and $\mb Z^m$, respectively.

\medskip
For $l<n$ we identify $\mb Z^l$ with the sublattice of $\mb Z^n$ that consists of the $n$-tuples with the last $n-l$ components equal to $0$.
\medskip

For $i\in\{1,\ldots,n\}$ we denote the element of $\mb Z^n$ which is $1$ on the $i^{\rm th}$ position and $0$ elsewhere by $\e_i$. The character corresponding to $\e_i$ is for $\GL_n$, and also for $\Sp_n$ if $i\le m$, the $i^{\rm th}$ diagonal entry function. The map $\lambda\mapsto\ov\lambda: \mb Z^n\to\mb Z^m$ corresponding to restriction of characters sends $\e_i$ to $\e_i$ if $i\le m$ and to $-\e_{i'}$ if $i>m$.

Recall that a function on a closed subgroup of $\GL_n$ is called {\it polynomial} if it is the restriction of a function on $\GL_n$ which is a polynomial in the matrix entries. Clearly all regular functions on $\Sp_n$ and $T_0$ are polynomial. The set of polynomial weights of $\GL_n$ with respect to $T$ corresponds, under the above identifications, to the subset $\mb N^n$ of $\mb Z^n$ that consists of the compositions $\lambda_1,\lambda_2,\cdots,\lambda_n\ge0$ of some integer $|\lambda|:=\sum_i\lambda_i$ into at most $n$ parts. The set of dominant polynomial weights with respect to $T$ and $B$ corresponds to the subset $\Lambda^+(n)$ of $\mb Z^n$ that consists of the partitions $\lambda_1\ge\lambda_2\ge\cdots\ge\lambda_n\ge0$ of some integer into at most $n$ parts.
The set of dominant weights of $\Sp_n$ with respect to $T_0$ and $B_0$ corresponds to the set $\Lambda_0^+(m)=\Lambda^+(m)\subseteq Z^m$. Define the sets of weights $\Lambda(n,r)$, $\Lambda^+(n,r)$, $\Lambda_0(m,r)$ and $\Lambda_0^+(m,r)$ by
\begin{align*}
\Lambda(n,r):=&\{\lambda\in \mb N^n\,|\,|\lambda|=r\},\ \Lambda^+(n,r)=\Lambda(n,r)\cap\Lambda^+(n)\\
\Lambda_0(m,r):=&\{\lambda\in \mb Z^m\,|\,-r\le|\lambda|\le r, r-|\lambda|\text{ even}\}\text{\ and\ }\\
\Lambda^+_0(m,r):=&\Lambda_0(m,r)\cap\Lambda^+(m)=\{\lambda\in\Lambda^+(m)\,|\,|\lambda|\le r, r-|\lambda|\text{ even}\}.
\end{align*}
Note that, under the above identifications, $\Lambda^+(n,r)=\Lambda^+(r,r)$ if $n\ge r$ and that $\Lambda^+_0(m,r)=\Lambda^+_0(r,r)$ if $m\ge r$.
The algebra $S(n,r)$ is the generalized Schur algebra associated to $\GL_n$ and $\Lambda^+(n,r)$ by \cite[Thm 8.3]{Don8}. Furthermore, one can deduce in the same way that $S_0(n,r)$ is the generalized Schur algebra associated to $\Sp_n$ and $\Lambda_0^+(m,r)$. Here we prefer to work with the symplectic group rather than the symplectic similitude group as in \cite{Don3}. For $S(n,r)$ we denote the standard, costandard and irreducible module associated to $\lambda\in\Lambda^+(n,r)$ by $\Delta(\lambda)$, $\nabla(\lambda)$ and $L(\lambda)$, respectively. Occasionally, we will also use this notation for an arbitrary quasi-hereditary algebra or an arbitrary connected reductive group. For $S_0(n,r)$ and $\lambda\in\Lambda_0^+(m,r)$ we denote these modules by $\Delta_0(\lambda)$, $\nabla_0(\lambda)$ and $L_0(\lambda)$. In case of $S(n,r)$, $S_0(n,r)$ or a connected reductive group these are the Weyl, induced and irreducible module associated to $\lambda$ for the group. Here we induce from the opposite Borel subgroups of $B$ and $B_0$ to $\GL_n$ and $\Sp_n$.

Later on we will need the following lemma which is, no doubt, well-known. Except for the second assertion of (i) (this can be found in \cite[Thm.~2.1]{BenCa} and \cite{Serre2}, for example), we couldn't find the result in the literature, so we include a proof.

\begin{lem}\label{lem.directsummand}\
\begin{enumerate}[{\rm(i)}]
\item Let $M$ be a finite dimensional vector space over $k$. The $k\GL(M)$-module $M$ is a direct summand of $M\otimes M^*\otimes M$ and if $\dim M\ne0$ in $k$, then the trivial $k\GL(M)$-module $k$ is a direct summand of $M\otimes M^*$.
\item Let $G$ be a group and let $M$ be a self-dual finite dimensional $kG$-module. Let $r$ and $t$ be integers with $0\le t\le r$ and $r-t$ even. Then $M^{\otimes t}$ is a direct summand of $M^{\otimes r}$ if $t\ge1$ or $\dim M\ne0$ in $k$.
\end{enumerate}
\end{lem}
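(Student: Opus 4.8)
The plan is to prove (i) first and then bootstrap (ii) from it by a simple induction.

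For part (i), the point is to exhibit explicit $k\GL(M)$-equivariant maps that split. Fix a basis $m_1,\dots,m_d$ of $M$ with dual basis $m_1^*,\dots,m_d^*$ of $M^*$. There is always the evaluation (contraction) map $\mathrm{ev}\colon M^*\otimes M\to k$, $\varphi\otimes x\mapsto\varphi(x)$, and the coevaluation map $\mathrm{coev}\colon k\to M\otimes M^*$, $1\mapsto\sum_i m_i\otimes m_i^*$; both are $\GL(M)$-equivariant and independent of the chosen basis. The composite $k\xrightarrow{\mathrm{coev}}M\otimes M^*\xrightarrow{\mathrm{ev}'}k$ (evaluating in the appropriate order) is multiplication by $\dim M$ viewed in $k$, so when $\dim M\ne 0$ in $k$ this realises $k$ as a direct summand of $M\otimes M^*$. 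For the first assertion, consider the two maps $M\to M\otimes M^*\otimes M$ given by $x\mapsto \mathrm{coev}(1)\otimes x=\sum_i m_i\otimes m_i^*\otimes x$ and $x\mapsto x\otimes \mathrm{coev}(1)=\sum_i x\otimes m_i\otimes m_i^*$, and the two contraction maps $M\otimes M^*\otimes M\to M$ contracting the first two, resp. last two, tensor factors. One checks directly that the ``inner'' contraction applied to the first embedding is the identity on $M$ (here no division is needed, since contracting $m_i^*$ against $x$ and summing over $i$ returns $x$), so $M$ is a direct summand of $M\otimes M^*\otimes M$ with no hypothesis on $\dim M$. This is the main computation, and it is routine.

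For part (ii), first reduce to the case $t=r-2$: it suffices to show that $M^{\otimes(r-2)}$ is a direct summand of $M^{\otimes r}$ whenever $r-2\ge 1$, or $r-2=0$ and $\dim M\ne 0$ in $k$; the general statement then follows by iterating. Now use self-duality of $M$: fix a $kG$-isomorphism $M\xrightarrow{\sim}M^*$, which turns the coevaluation $k\to M\otimes M^*$ into a $kG$-map $k\to M\otimes M$ and the evaluation into a $kG$-map $M\otimes M\to k$. If $t=r-2\ge 1$, write $M^{\otimes r}=M^{\otimes(r-1)}\otimes M=(M^{\otimes(r-2)}\otimes M)\otimes M$ and apply the first assertion of (i) to the last copy of $M$ together with a chosen copy inside $M^{\otimes(r-1)}$—more precisely, tensor the splitting ``$M$ is a summand of $M\otimes M^*\otimes M$'' with $M^{\otimes(r-2)}$ on the left and transport $M^*$ to $M$ via self-duality, obtaining that $M^{\otimes(r-2)}\otimes M\cong M^{\otimes(r-1)}$ is a summand of $M^{\otimes(r-2)}\otimes M\otimes M=M^{\otimes r}$; but $M^{\otimes(r-1)}=M^{\otimes(r-2)}\otimes M$ visibly contains $M^{\otimes(r-2)}$ as a summand only after one more contraction, so instead it is cleaner to go directly: the $kG$-maps $M^{\otimes(r-2)}\xrightarrow{\mathrm{id}\otimes\,\mathrm{coev}}M^{\otimes r}$ and $M^{\otimes r}\xrightarrow{\mathrm{id}\otimes\,\mathrm{ev}}M^{\otimes(r-2)}$ (using the self-dual pairing on the last two factors) compose to the identity on $M^{\otimes(r-2)}$ provided we use the ``no division'' version, i.e. provided we contract $\mathrm{coev}$ against $\mathrm{ev}$ in the order that gives the identity of $M$ rather than multiplication by $\dim M$. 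When $t=r-2=0$ this trick is unavailable (there is only the pairing $k\to M\otimes M\to k$, which is multiplication by $\dim M$), and that is exactly where the hypothesis $\dim M\ne 0$ in $k$ enters, via the $k\subseteq M\otimes M^*$ part of (i) transported through self-duality.

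The only subtlety—and the one place to be careful—is the bookkeeping of \emph{which} composite of coevaluation and evaluation gives the identity versus multiplication by $\dim M$: the ``zig-zag'' identities for the dual pair $(M,M^*)$ say that contracting in one order is the identity of $M$ while the closed loop $k\to M\otimes M^*\to k$ is $\dim M$. Part (i)'s first assertion uses the former (hence is unconditional), and its second assertion is the latter (hence needs $\dim M\ne 0$ in $k$); part (ii) with $t\ge 1$ inherits the unconditional version, and $t=0$ inherits the conditional one. Everything else is formal manipulation of tensor factors, so I would state the zig-zag identities once and let them do the work.
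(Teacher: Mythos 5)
Your proof is correct and follows essentially the same route as the paper: the zig-zag identity gives the unconditional splitting of $M$ off $M\otimes M^*\otimes M$, the closed loop $k\to M\otimes M^*\to k$ is multiplication by $\dim M$ (whence the hypothesis $\dim M\ne0$ in $k$), and (ii) follows by induction from (i) via self-duality. One small caution: in (ii) the displayed composite $(\id\otimes\mathrm{ev})\circ(\id\otimes\mathrm{coev})$ with both maps acting on the \emph{same} last two factors is literally the closed loop, i.e.\ multiplication by $\dim M$; for $t\ge1$ the insertion and contraction must be staggered across three tensor factors exactly as in the zig-zag identity you state in your final paragraph, which does get the bookkeeping right.
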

\begin{proof}
(i).\ Put $l=\dim M$, let $(v_1,\ldots,v_l)$ be a basis of $M$ and let $(v_1^*,\ldots,v_l^*)$ be the dual basis of $M^*$. Let $a:k\to M\otimes M^*$ be the map $\alpha\mapsto \alpha\sum_{i=1}^lv_i\otimes v_i^*$, let $b:M\to M\otimes M^*\otimes M$ be the map $x\mapsto x\otimes\sum_{i=1}^lv_i^*\otimes v_i$ and let $c:M\otimes M^*\to k$ be the contraction by means of the canonical bilinear form. Then one easily checks that $(c\otimes\id)\circ b=\id$ and that $c\circ a=l\,\id$. This proves (i).\\
(ii).\ By (i) we have that $M$ is a direct summand of $M^{\otimes 3}$ and, if $\dim M\ne0$ in $k$, the trivial $kG$-module $k$ is a direct summand of $M^{\otimes 2}$. The assertion now follows by induction.\\
\end{proof}

\subsection{Modules for the Brauer algebra}\label{ss.braueralgebra}
\begin{notnn}
In what follows, $s$ and $t$ are not necessarily fixed integers $\ge0$ such that $r=t+2s$.
\end{notnn}
Let $\delta\in k$. For any integer $i\ge0$, let $I_{s,i}$ be the left ideal of the Brauer algebra $B_r=B_r(\delta)$ spanned by the diagrams of which the bottom row has $s$ horizontal edges which each join two consecutive nodes of the last $2s$ nodes and has at least $i$ other horizontal edges. Put $I_s:=I_{s,0}$, $Z_{s,i}:=I_{s,i}/I_{s,i+1}$ and $Z_s=Z_{s,0}$. Note that $I_{s,i}=Z_{s,i}=0$ if $s+i>r/2$. The symmetric group $\Sym_t$ acts on $I_s$ from the right by permuting the first $t$ nodes of the bottom row of a diagram. Thus $I_s$ and $Z_s$ are $(B_r(\delta), k\Sym_t)$-bimodules. Furthermore $Z_s$ is a free right $k\Sym_t$-module which has as a basis the canonical images of the diagrams in which the vertical edges do not cross and of which the bottom row has precisely $s$ horizontal edges which each join two consecutive nodes of the last $2s$ nodes. One easily checks that there are $$\frac{r!}{s!t!2^s}$$ such diagrams.

Let $\lambda$ be a partition of $t$ and let $S(\lambda)$, $M(\lambda)$ and $Y(\lambda)$ be the Specht module, permutation module and Young module of $k\Sym_t$ associated to $\lambda$. If ${\rm char}\,k=0$, then $S(\lambda)$ is irreducible and we also denote it by $D(\lambda)$. If ${\rm char}\,k=p>0$ and $\lambda$ is $p$-regular, then $S(\lambda)$ has a simple head and we denote it by $D(\lambda)$. Denote the sign representation of $k\Sym_t$ by $k_{\rm sg}$.

Following \cite{DorHanWal} (see also \cite{HarPag}), we define the {\it Specht (or cell) module} $\mc S(\lambda)$ and {\it twisted Specht (or cell) module} $\widetilde{\mc S}(\lambda)$ for the Brauer algebra by
\begin{align*}
\mc S(\lambda):=&\,Z_s\otimes_{k\Sym_t}S(\lambda)\text{\quad and}\\
\widetilde{\mc S}(\lambda):=&\,Z_s\otimes_{k\Sym_t}\big(k_{\rm sg}\otimes S(\lambda)\big).
\end{align*}
By the above, $\dim\mc S(\lambda)=\dim\widetilde{\mc S}(\lambda)=\frac{r!}{s!t!2^s}\dim S(\lambda)$. By \cite[Rem.~6.4]{Green} we have $k_{\rm sg}\otimes S(\lambda)\cong S(\lambda')^*$, where $\lambda'$ denotes the transpose of $\lambda$. If ${\rm char}\,k=0$ or $>t$, then $S(\lambda)^*\cong S(\lambda)$ and $\widetilde{\mc S}(\lambda)\cong{\mc S}(\lambda')$.
\begin{remnn}
The definitions and results in \cite{HarPag} have obvious \lq\lq twisted versions" and in what follows we will also cite \cite{HarPag} for those twisted versions.
\end{remnn}
Following Hartmann and Paget \cite{HarPag}, we define the {\it permutation module} $\mc M(\lambda)$ and the {\it twisted permutation module} $\widetilde{\mc M}(\lambda)$ for the Brauer algebra by
\begin{align*}
\mc M(\lambda):=&\,{\rm Ind}^{B_r}_{k\Sym_t}M(\lambda)\text{\quad and}\\
\widetilde{\mc M}(\lambda):=&\,{\rm Ind}^{B_r}_{k\Sym_t}\big(k_{\rm sg}\otimes M(\lambda)\big).
\end{align*}
Here ${\rm Ind}^{B_r}_{k\Sym_t}$ is defined by ${\rm Ind}^{B_r}_{k\Sym_t}V=I_s\otimes_{k\Sym_t}V$ for any $k\Sym_t$-module $V$.
Note that $\widetilde{\mc M}((1^r))\cong B_r$, since $k_{\rm sg}\otimes k\Sym_r\cong k\Sym_r$ as $k\Sym_r$-modules. If $\lambda$ is $p$-regular and $\lambda\ne\emptyset$ in case $r$ is even $\ge2$ and $\delta=0$, then ${\mc S}(\lambda)$ and $\widetilde{\mc S}(\lambda)$ have a simple head which we denote by ${\mc D}(\lambda)$ and $\widetilde{\mc D}(\lambda)$. Whenever we write ${\mc D}(\lambda)$ and $\widetilde{\mc D}(\lambda)$ for some $p$-regular $\lambda$, we assume that $\lambda\ne\emptyset$ in case $r$ is even $\ge2$ and $\delta=0$.

In \cite{CdVM2}, after Lemma~2.1 and in Section~8, it is pointed out that it can be shown by completely elementary arguments that the above partitions form a labeling set for the irreducible $B_r$-modules. The exception $\lambda\ne\emptyset$ in case $r$ is even $\ge2$ and $\delta=0$ is caused by the fact that in case $\delta=0$, $B_2$ has a one-dimensional nilpotent ideal with quotient isomorphic to $k\Sym_2$.

Finally, we define the {\it Young module} $\mc Y(\lambda)$ and the {\it twisted Young} module $\widetilde{\mc Y}(\lambda)$ for the Brauer algebra as the unique indecomposable summand of $\mc M(\lambda)$ (resp. $\widetilde{\mc M}(\lambda)$) which surjects onto $Z_s\otimes_{k\Sym_t}Y(\lambda)$ (resp. $Z_s\otimes_{k\Sym_t}\big(k_{\rm sg}\otimes Y(\lambda)\big)$); see \cite[Def.~6.3]{HarPag}. There it is also observed that $\mc Y(\lambda)$ and $\widetilde{\mc Y}(\lambda)$ are actually indecomposable summands of ${\rm Ind}^{B_r}_{k\Sym_t}Y(\lambda)$ and ${\rm Ind}^{B_r}_{k\Sym_t}\big(k_{\rm sg}\otimes Y(\lambda)\big)$.

Let $i$ be an integer $\ge0$. The stabilizer of $\{\{1,2\},\ldots,\{2i-1,2i\}\}$ in $\Sym_{2i}$ is isomorphic to the hyperoctahedral group of degree $i$ and order $2^ii!$ and we denote it by $H_i$. We consider $\Sym_{2i}$ and $H_i$ as embedded in $\Sym_t$ via the embedding $\Sym_{t-2i}\times\Sym_{2i}\subseteq\Sym_t$. From the proof of \cite[Prop.~7.3]{HarPag} we deduce the following

\begin{prop}[{cf. proof of \cite[Prop.~7.3]{HarPag}}]\label{prop.filtration}
Let $V$ be a $k\Sym_t$-module.
\begin{enumerate}[{\rm(i)}]
\item $W:={\rm Ind}^{B_r}_{k\Sym_t}V$ has a descending filtration $W=W_0\supseteq W_1\supseteq\cdots$ such that $W_i=0$ for $i>{\lfloor t/2\rfloor}$ and $W_i/W_{i+1}\cong Z_{s,i}\otimes_{k\Sym_t}V$ for $i\ge0$.
\item $Z_{s,i}\otimes_{k\Sym_t}V\cong Z_{s+i}\otimes_{k\Sym_{t-2i}}V_{H_i}$ for $i\le{\lfloor t/2\rfloor}$, where $V_{H_i}$ is the largest trivial $H_i$-module quotient of $V$.
\end{enumerate}
\end{prop}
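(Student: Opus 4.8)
\textbf{Proof proposal for Proposition~\ref{prop.filtration}.} The plan is to recover both statements directly from the combinatorics of Brauer diagrams, which is how the proof of \cite[Prop.~7.3]{HarPag} proceeds. Recall that $W={\rm Ind}^{B_r}_{k\Sym_t}V=I_s\ot_{k\Sym_t}V$, where $I_s$ is spanned by the diagrams whose bottom row has $s$ horizontal edges joining consecutive nodes among the last $2s$ nodes. Set $W_i:=I_{s,i}\ot_{k\Sym_t}V$, using the chain of two-sided-in-$B_r$, right-$k\Sym_t$-submodules $I_s=I_{s,0}\supseteq I_{s,1}\supseteq\cdots$ from the Notation preceding the proposition. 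Since $I_{s,i}=0$ for $s+i>r/2$, in particular $W_i=0$ once $i>\lfloor t/2\rfloor$ (as $r=t+2s$). The point to check for (i) is that $-\ot_{k\Sym_t}V$ applied to the short exact sequence $0\to I_{s,i+1}\to I_{s,i}\to Z_{s,i}\to0$ of right $k\Sym_t$-modules stays exact on the left; this holds because each $Z_{s,i}$ is a free right $k\Sym_t$-module (spanned by the non-crossing diagram representatives described in the Notation), hence flat, so the sequence of right modules is split over $k\Sym_t$ and remains split-exact after $-\ot_{k\Sym_t}V$. This gives $W_i/W_{i+1}\cong Z_{s,i}\ot_{k\Sym_t}V$.

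For part (ii), I would exhibit an explicit isomorphism of $(B_r,k\Sym_t)$-bimodules-of-the-right-flavor between $Z_{s,i}$ and $Z_{s+i}\ot_{k\Sym_{t-2i}}k\Sym_t$, where the last $t-2i$ of the first $t$ bottom nodes carry the $\Sym_{t-2i}$-action and the $H_i$ acts through its quotient $\Sym_i$ wreathed appropriately... more precisely, one checks that a diagram spanning $Z_{s,i}$ is determined by: a diagram spanning $Z_{s+i}$ (the $s+i$ "fixed" horizontal pairs now including $i$ more consecutive pairs among the middle nodes), together with a coset of $H_i$ in $\Sym_t$ telling how the $2i$ newly-paired nodes sit among the first $t$. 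Since pairing up $2i$ nodes is exactly modding out by the stabilizer $H_i$ of the standard pairing $\{\{1,2\},\ldots,\{2i-1,2i\}\}$, and the $\Sym_{2i}$-action on these nodes factors through the quotient by $H_i$ after we remember only the pairing, one gets $Z_{s,i}\cong Z_{s+i}\ot_{k\Sym_{t-2i}}k(\Sym_t/H_i)$ as right $k\Sym_t$-modules compatibly with the left $B_r$-action. Tensoring with $V$ and using $(k(\Sym_t/H_i)\ot_{k\Sym_t}V)\cong(k\Sym_{t-2i}\ot_{kH_i}k\ot\cdots)\cong V_{H_i}$ as a $k\Sym_{t-2i}$-module — i.e.\ $k(\Sym_t/H_i)\ot_{k\Sym_t}V\cong k\ot_{kH_i}V$ and the residual $\Sym_{t-2i}$-action makes this $V_{H_i}$ — yields $Z_{s,i}\ot_{k\Sym_t}V\cong Z_{s+i}\ot_{k\Sym_{t-2i}}V_{H_i}$, as claimed.

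The main obstacle is bookkeeping in (ii): making the identification of $Z_{s,i}$ with $Z_{s+i}\ot_{k\Sym_{t-2i}}k(\Sym_t/H_i)$ respect simultaneously the left $B_r$-module structure (composition of diagrams, which can create further horizontal edges and introduce powers of $\delta$) and the right $\Sym_t$-module structure, and verifying that passing to $H_i$-coinvariants really produces the \emph{largest trivial $H_i$-quotient} $V_{H_i}$ rather than, say, coinvariants for the larger group $\Sym_{2i}$. The cleanest route is to fix once and for all the non-crossing diagram representatives of a basis of $Z_{s,i}$ as a free right $k\Sym_t$-module — which the Notation already guarantees exists, at least for $Z_{s,0}$, and whose analogue for $Z_{s,i}$ one extracts the same way — and to read off the basis of $Z_{s+i}$ with its residual $\Sym_{t-2i}$-action and the $H_i$-coset data by direct inspection of where the horizontal edges land. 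Since all of this is carried out in detail in the proof of \cite[Prop.~7.3]{HarPag}, I would present it as a citation with the (minor) remark that the twisted case is identical and that replacing $V$ by an arbitrary $k\Sym_t$-module changes nothing in the argument.
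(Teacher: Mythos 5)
Your treatment of part (ii) is essentially what the paper does: it, too, defers the bookkeeping to the proof of \cite[Prop.~7.3]{HarPag}, and your sketch of the identification $Z_{s,i}\otimes_{k\Sym_t}V\cong Z_{s+i}\otimes_{k\Sym_{t-2i}}(k\otimes_{kH_i}V)$ via coset data for the standard pairing is the right picture (and is dimension-consistent).

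Part (i), however, rests on a false claim. You assert that each $Z_{s,i}$ is a free right $k\Sym_t$-module, citing the Notation; but the Notation asserts freeness only for $Z_s=Z_{s,0}$, and for $i\ge1$ the module $Z_{s,i}$ is genuinely not free (nor projective, nor flat). The right $\Sym_t$-action permutes the diagrams spanning $Z_{s,i}$ with nontrivial stabilizers: a transposition interchanging the two endpoints of one of the $i$ extra horizontal edges fixes that diagram. Concretely, for $r=t=2$, $s=0$, $i=1$, the module $Z_{0,1}$ is one-dimensional with trivial $\Sym_2$-action, so it cannot be free over the two-dimensional algebra $k\Sym_2$; in characteristic $2$ it is not even projective, so the inference ``free, hence flat, hence the sequence stays exact'' collapses. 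The conclusion you want --- that $0\to I_{s,i+1}\to I_{s,i}\to Z_{s,i}\to 0$ is split as right $k\Sym_t$-modules --- is nevertheless true, but for a different reason, which is exactly the paper's argument: let $I_s(i)$ be the span of the diagrams in $I_s$ whose bottom row has precisely $i$ horizontal edges besides the $s$ standard ones. Since the right $\Sym_t$-action only relabels the first $t$ bottom nodes, it preserves the number of such edges, so each $I_s(i)$ is a right $\Sym_t$-submodule and $I_{s,i}=\bigoplus_{j\ge i}I_s(j)$. Thus $I_{s,i+1}$ is a right $\Sym_t$-module direct summand of $I_{s,i}$ with complement $I_s(i)$, and applying $-\otimes_{k\Sym_t}V$ to the direct sum decomposition $I_s=\bigoplus_i I_s(i)$ immediately yields submodules $W_i=\bigoplus_{j\ge i}\bigl(I_s(j)\otimes_{k\Sym_t}V\bigr)\cong I_{s,i}\otimes_{k\Sym_t}V$ of $W$ with the stated subquotients. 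You should replace the freeness argument by this observation; as written, the justification of the only nontrivial step in (i) does not hold.
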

The filtration of ${\rm Ind}^{B_r}_{k\Sym_t}V=I_s\otimes_{k\Sym_t}V$ is constructed as follows. Let $I_s(i)$ be the subspace of $I_s$ spanned by the diagrams of which the bottom row has $s$ horizontal edges which each join two consecutive nodes of the last $2s$ nodes and has precisely $i$ other horizontal edges. Then $I_{s,i}=\bigoplus_{j\ge i}I_s(j)$. Since each $I_s(i)$ is stable under the right action of $\Sym_t$ on $I_s$, we have ${\rm Ind}^{B_r}_{k\Sym_t}V=\bigoplus_{i\ge0}(I_s(i)\otimes_{k\Sym_t}V)$. Now we put $W_i=\bigoplus_{j\ge i}(I_s(j)\otimes_{k\Sym_t}V)\cong I_{s,i}\otimes_{k\Sym_t}V$ and observe that $W_i$ is a $B_r$-submodule of $W$.

We record the following consequence of \cite[Prop.~6.1]{CdVM2} which was mentioned to us by A.~Cox. It shows that we can restrict to the case that $\delta$ lies in the prime field. Of course, a sharper result is known in characteristic $0$; see \cite{Wen} and \cite{Brown2}.
\begin{prop}[{cf.~\cite[Prop.~6.1]{CdVM2}}]\label{prop.notinFp}
Assume that $\delta$ does not lie in the prime field. Put $N_i=r!/\big(i!(r-2i)!2^i\big)$. Then
$$B_r(\delta)\cong\bigoplus_{i=0}^{\lfloor r/2\rfloor}\Mat_{N_i}(k\Sym_{r-2i}).$$
\end{prop}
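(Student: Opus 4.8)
The plan is to reduce the statement to a known structural fact about the Brauer algebra, namely its cellular/quasi-hereditary stratification, and then observe that the ``exceptional'' nature of $\delta\notin\mathbb{F}_p$ collapses that stratification into a direct sum. Concretely, recall that $B_r(\delta)$ has an ideal filtration
$$B_r(\delta)=J_0\supseteq J_1\supseteq\cdots\supseteq J_{\lfloor r/2\rfloor}\supseteq J_{\lfloor r/2\rfloor+1}=0,$$
where $J_i$ is spanned by the diagrams with at least $i$ horizontal edges in the bottom row, and each subquotient $J_i/J_{i+1}$ is, as a $B_r(\delta)$-bimodule, of the form $V_i\otimes_{k}V_i^{\ast}$ with $V_i$ built from $Z_{s,i}$-type spaces and carrying a $k\Sym_{r-2i}$-action; the key point is that $J_i/J_{i+1}$ is Morita equivalent to $k\Sym_{r-2i}$, and in fact $\End$ of the relevant module is $\Mat_{N_i}(k\Sym_{r-2i})$ with $N_i=r!/\big(i!(r-2i)!2^i\big)$ counting the diagrams with the prescribed bottom pattern and non-crossing vertical strands. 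This is exactly the data already recorded in Subsection~\ref{ss.braueralgebra}.

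First I would recall from \cite[Prop.~6.1]{CdVM2} (or re-derive it) that the obstruction to splitting the filtration $J_\bullet$ is governed by certain extension/bilinear-form data in which $\delta$ enters polynomially: passing between layer $i$ and layer $i+1$ involves contractions that contribute a factor $\delta - c$ for various integers $c$ (the ``$\delta$ minus a small integer'' that appears throughout the theory of Brauer algebras, e.g.\ in Lemma~\ref{lem.directsummand}(i) where $c\circ a=l\,\id$). When $\delta$ does not lie in the prime field, all these scalars $\delta-c$ are \emph{invertible} in $k$ for every integer $c$, simply because $\delta-c\ne 0$ (if $\delta-c=0$ then $\delta=c\in\mathbb{F}_p\subseteq k$, contradiction). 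Invertibility of all these connecting scalars is precisely what is needed to construct $B_r(\delta)$-bimodule splittings $J_i\cong J_{i+1}\oplus(J_i/J_{i+1})$, carried out by induction downward on $i$ starting from the bottom layer $J_{\lfloor r/2\rfloor}$.

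Having split the filtration, I would identify each layer: $J_i/J_{i+1}$, being generated by its idempotent-type elements corresponding to the $N_i$ non-crossing diagrams with exactly $i$ bottom horizontal edges joining the last $2i$ nodes, is isomorphic as an algebra to $\Mat_{N_i}(\End_{B_r}(\text{that layer})) = \Mat_{N_i}(k\Sym_{r-2i})$. Here one uses that the $\Sym_{r-2i}$-action on the ``through-strands'' is free (the analogue of $Z_s$ being a free right $k\Sym_t$-module, as noted before Proposition~\ref{prop.filtration}) and that no further collapsing occurs because, again, the relevant normalizing scalars are units. Summing over $i=0,\dots,\lfloor r/2\rfloor$ gives the claimed decomposition $B_r(\delta)\cong\bigoplus_{i=0}^{\lfloor r/2\rfloor}\Mat_{N_i}(k\Sym_{r-2i})$.

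The main obstacle is the inductive splitting step: one must check carefully that every scalar arising when lifting an idempotent or a splitting map from $J_i/J_{i+1}$ up into $J_i$ is of the form (unit)$\cdot(\delta-c)$ for some integer $c$, rather than something that could genuinely involve $p$ or vanish for $\delta\notin\mathbb{F}_p$. This is the content of \cite[Prop.~6.1]{CdVM2}, and since the proposition is explicitly attributed to that reference (``cf.'' in the statement), I would simply invoke it: the elementary verification that $\delta-c\in k^{\times}$ for all $c\in\mathbb{Z}$ when $\delta\notin\mathbb{F}_p$ is immediate, and the structural argument that this forces the Brauer algebra to be semisimple-modulo-$k\Sym_{r-2i}$-pieces is already in the literature. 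Thus the ``proof'' here is essentially a citation combined with the observation on invertibility of $\delta-c$; I would keep it to a sentence or two rather than reproducing the filtration argument in full.
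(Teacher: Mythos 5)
Your skeleton (the filtration $J_\bullet$ by number of horizontal edges, split it, identify the layers as $\Mat_{N_i}(k\Sym_{r-2i})$) matches the paper's, and you correctly identify \cite[Prop.~6.1]{CdVM2} as the essential external input. But both substantive steps rest on a heuristic that is neither proved nor, as stated, correct, so the proposal does not close the argument. First, the splitting. You assert that the obstruction to splitting off each layer is a product of scalars $\delta-c$, $c\in\mb Z$, all invertible when $\delta\notin\mb F_p$. This is not established, and it is not how the hypothesis actually enters: the relevant normalizing data for the Brauer algebra (Gram determinants, the element $X$ below) are polynomials in $\delta$ whose integer coefficients can vanish mod $p$ independently of $\delta$, so ``$\delta-c$ is a unit for all $c$'' is not by itself the right invariant. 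What the paper extracts from \cite[Prop.~6.1]{CdVM2} is the block separation: simples $\mc D(\mu)$ with different $|\mu|$ lie in different blocks. Combined with the cell filtration of $I_i={\rm Ind}^{B_r}_{k\Sym_{t_i}}k\Sym_{t_i}$ from Proposition~\ref{prop.filtration} (every composition factor of $J_i$ is some $\mc D(\mu)$ with $|\mu|\le t_i$) and the criterion from the proof of \cite[Prop.~3.3]{HarPag} that $\mc D(\mu)$ is killed by $I_i$ exactly when $|\mu|>t_i$ (so $B_r/J_i$ has only factors with $|\mu|>t_i$), one concludes that $J_i$ and $B_r/J_i$ occupy disjoint sets of blocks; this is what forces the left-ideal complement $J^i$ to be two-sided and yields $B_r\cong\bigoplus_i J_i/J_{i+1}$ with each summand unital. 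None of this composition-factor bookkeeping appears in your sketch, and your proposed inductive bimodule splitting is never constructed.

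Second, the identification of the layers. Without first producing a unit in $J_i/J_{i+1}$ you cannot conclude it is a matrix algebra: by \cite{Brown1} and \cite[Sect.~4]{koxi} the layer is a priori only $\Mat_{N_i}(k\Sym_{t_i})$ with the twisted product $A\circ B=AXB$ for a fixed $X$, and $X$ need not be invertible --- for $B_2(0)$ the bottom layer is one-dimensional and nilpotent, not $\Mat_1(k)$. The order of argument matters: the splitting supplies the unit, the unit forces $X$ to be invertible, and only then does $A\mapsto AX$ untwist the product. Your ``no further collapsing occurs because the relevant normalizing scalars are units'' assumes exactly the point at issue. Since \cite[Prop.~6.1]{CdVM2} does not itself assert the matrix decomposition (the proposition is recorded as a \emph{consequence} of it), the derivation is the actual content of the proof and still needs to be supplied.
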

\begin{proof}
Let $i\in\{0,\ldots,\lfloor r/2\rfloor\}$ and put $t_i=r-2i$. Let $J_i$ be the two-sided ideal of $B_r$ that is spanned by the diagrams which have at least $2i$ horizontal edges. Note that $J_i=I_iB_r$. By \cite[Prop.~6.1]{CdVM2} we have that $p$-regular partitions of different numbers belong to different blocks. Since cell modules always belong to one block, we get that ${\mc S}(\lambda)$ can only have composition factors ${\mc D}(\mu)$, $\mu$ a $p$-regular partition of $|\lambda|$. From Proposition~\ref{prop.filtration} we now deduce that $I_i={\rm Ind}^{B_r}_{k\Sym_{t_i}}k\Sym_{t_i}$ has only composition factors ${\mc D}(\mu)$, $\mu$ a $p$-regular partition with $|\mu|\le t_i$. The same must hold for $J_i$, since it is a sum of images of $I_i$. By the proof of \cite[Prop.~3.3]{HarPag} the irreducible module ${\mc D}(\mu)$, $\mu$ $p$-regular, is killed by $I_i$ if and only if $|\mu|>t_i$. Since the composition factors of $B_r/J_i$ are all killed by $J_i$ they must be of the form ${\mc D}(\mu)$, $\mu$ $p$-regular with $|\mu|>t_i$. By \cite[Prop.~6.1]{CdVM2} there exists a left ideal $J^i$ of $B_r$ such that $B_r=J_i\oplus J^i$. Since we are dealing with the left regular module it is clear that $J^i$ must be a two-sided ideal. It follows that $B_r(\delta)\cong\bigoplus_{i=0}^{\lfloor r/2\rfloor}J_i/J_{i+1}$, where each of the algebras $J_i/J_{i+1}$ has a unit element. The algebra $J_i/J_{i+1}$ is isomorphic to $\Mat_{N_i}(k\Sym_{t_i})$ where the multiplication is given by $A\circ B=AXB$ for some fixed $X\in\Mat_{N_i}(k\Sym_{t_i})$; see \cite{Brown1} and \cite[Sect.~4]{koxi}. Since $J_i/J_{i+1}$ has a unit element we must have that $X$ is invertible in $\Mat_{N_i}(k\Sym_{t_i})$. But then $A\mapsto AX$ defines an isomorphism $J_i/J_{i+1}\stackrel{\sim}{\to}\Mat_{N_i}(k\Sym_{t_i})$.
\end{proof}

In the remainder of this subsection we assume that $\delta=-n=-2m$ and that $m\ge r$. Note that if the field $k$ is of prime characteristic $p>2$, then any element of the prime subfield of $k$ can be represented by an integer of this form. Under the representation $B_r\to\End_{\Sp_n}(E^{\otimes r})$ each Brauer diagram corresponds to an endomorphism of the $\Sp_n$-module $E^{\otimes r}$. There is a more direct way to associate to each Brauer diagram an endomorphism of the $\Sp_n$-module $E^{\otimes r}$; see e.g. \cite[p 871]{Br} or \cite[Sect.~3]{T}. Furthermore, there is a unique algebra structure on the vector space $B_r$ such that this other map is a homomorphism of algebras. Let us call the resulting algebra the symplectic Brauer algebra and denote it by $\widetilde{B}_r(n)$ or just $\widetilde{B}_r$. Of course, the other map comes from an isomorphism $\widetilde{B}_r(n)\stackrel{\sim}{\to}B_r(-n)$. This isomorphism sends each of the $r$ standard generators of $\widetilde{B}_r$ to the negative of the corresponding standard generator of $B_r$. This implies that each diagram $d\in\widetilde{B}_r(n)$ corresponds to $\pm d\in B_r(-n)$. The multiplication of $\widetilde{B}_r$ is more complicated to describe. In \cite{Br} Brauer introduced the algebras $B_r(n)$ and $\widetilde{B}_r(n)$ and their action on tensor space separately, the isomorphism $\widetilde{B}_r(n)\stackrel{\sim}{\to}B_r(-n)$ was observed later in \cite{HanWal}. In \cite[Sect.~3]{T} it is explained, using Brauer's original arguments, that, more generally, $\Hom_{\Sp_n}(E^{\otimes t_2},E^{\otimes t_1})$ has a basis indexed by $(t_1,t_2)$-diagrams. These are diagrams which are graphs whose vertices are arranged in two rows, $t_1$ in the top row and $t_2$ in the bottom row, and whose edges form a matching of the $t_1+t_2$ nodes in (unordered) pairs. The horizontal edges in the bottom row correspond to contractions by means of the symplectic form and the horizontal edges in the top row correspond to \lq\lq multiplications" by the symplectic invariant $\sum_{i=1}^n\se_ie_i\otimes e_{i'}$. In the proofs of Lemmas~\ref{lem.surjective} and \ref{lem.tensor} below we will use these diagram bases.

The symplectic form on $E$ induces a nondegenerate bilinear form on $E^{\otimes r}$. So $\End_k(E^{\otimes r})$ has a transpose map. Recall that $B_r$ has a standard anti-automorphism $\iota$ that flips a diagram over the horizontal axis. One easily checks that $\iota(b)$ acts as the transpose of $b$ for all $b\in B_r$. This means that the $B_r$-module $E^{\otimes r}$ is self-dual. Under the isomorphism $B_r\stackrel{\sim}{\to}\widetilde{B}_r$ the left ideal $I_s$ is mapped to a left ideal $\widetilde{I}_s$ of $\widetilde{B}_r$ which is spanned by the same diagrams. These diagrams are in 1-1 correspondence with the $(r,t)$-diagrams: just omit the last $2s$ nodes in the bottom row and the edges which have these nodes as endpoints. So the canonical isomorphism $\widetilde{B}_r\stackrel{\sim}{\to}\Hom_{\Sp_n}(E^{\otimes r})$ induces a canonical isomorphism
$$I_s\stackrel{\sim}{\to}\Hom_{\Sp_n}(E^{\otimes t},E^{\otimes r})\otimes k_{\rm sg}$$ of $(B_r,k\Sym_t)$-bimodules. The vector space $\Hom_{\Sp_n}(E^{\otimes r},E^{\otimes t})$ has a natural $(k\Sym_t,B_r)$-bimodule structure and therefore, by means of the standard anti-automorphisms of $\Sym_t$ and $B_r$, also a natural $(B_r,k\Sym_t)$-bimodule structure. Composing the above isomorphism with the transpose map $\Hom_{\Sp_n}(E^{\otimes t},E^{\otimes r})\to\Hom_{\Sp_n}(E^{\otimes r},E^{\otimes t})$ we obtain an canonical isomorphism
\begin{equation}\label{eq.I_siso}
\varphi:I_s\stackrel{\sim}{\to}\Hom_{\Sp_n}(E^{\otimes r},E^{\otimes t})\otimes k_{\rm sg}
\end{equation}
of $(B_r,k\Sym_t)$-bimodules, which induces an isomorphism
\begin{equation}\label{eq.Z_siso}
Z_s\stackrel{\sim}{\to}\big(\Hom_{\Sp_n}(E^{\otimes r},E^{\otimes t})/\varphi(I_{s,1})\big)\otimes k_{\rm sg}
\end{equation}
of $(B_r,k\Sym_t)$-bimodules.

\section{The symplectic Schur functor}\label{s.sympschur}

For a finite dimensional algebra $A$ over $k$, we denote the category of finite dimensional $A$-modules by ${\rm mod}(A)$. The category ${\rm mod}(S(n,r))$ can be identified with the category of $\GL_n$-modules whose coefficients are homogeneous polynomials of degree $r$ in the matrix entries. Assume that $n\ge r\ge0$. The Schur functor $f:{\rm mod}(S(n,r))\to{\rm mod}(k\Sym_r)$ can be defined by
\begin{align*}
&f(M)=\Hom_{S(n,r)}(E^{\otimes r},M)=\Hom_{\GL_n}(E^{\otimes r},M).
\end{align*}
Here the action of the symmetric group comes from the action on $E^{\otimes r}$ and we use the inversion to turn right modules into left modules. An equivalent definition is: $f(M)=M_{\varpi_r}$, the weight space corresponding to the weight $\varpi_r=(1^r)=(1,1,\ldots,1)\in\mb Z^r\subseteq\mb Z^n$; see \cite{Green}. The isomorphism
\begin{align}\label{eq.schuriso}
\Hom_{\GL_n}(E^{\otimes r},M)\stackrel{\sim}{\to} M_{\varpi_r}
\end{align}
is given by $u\mapsto u(e_1\otimes e_2\otimes\cdots\otimes e_r)$. This can be deduced from \cite[6.2g Rem.~1 and 6.4f ]{Green}. We have embeddings $\Sym_r\subseteq\Sym_n\subseteq N_{\GL_n}(T)$, where the second embedding is by permutation matrices. Then $\varpi_r$ is fixed by $\Sym_r$, so there is an action of $\Sym_r$ on $M_{\varpi_r}$ for every $S(n,r)$-module $M$. With this action \eqref{eq.schuriso} is $\Sym_r$-equivariant. The inverse Schur functor $g:{\rm mod}(k\Sym_r)\to{\rm mod}(S(n,r))$ can be defined by
$$g(V)=E^{\otimes r}\otimes_{k\Sym_r}V.$$

We now retain the notation and assumptions of Subsection~\ref{ss.braueralgebra}. So $n=2m$, $m\ge r$ and $B_r=B_r(-n)$. We define the {\it symplectic Schur functor}
$$f_0:{\rm mod}(S_0(n,r))\to{\rm mod}(B_r)$$ by
\begin{align*}
&f_0(M)=\Hom_{S_0(n,r)}(E^{\otimes r},M)=\Hom_{\Sp_n}(E^{\otimes r},M).
\end{align*}
Here the action of the Brauer algebra comes from the action on $E^{\otimes r}$ and we use the standard anti-automorphism of $B_r$ to turn right modules into left modules. Note that the action of $\Sym_r$ on $E^{\otimes r}$ inherited from that of $B_r$ is its natural action twisted by the sign. Since $E=\nabla_0(\e_1)=\Delta_0(\e_1)$ is a tilting module, the same holds for $E^{\otimes r}$; see e.g. \cite[Prop.~1.2]{Don6}. This implies that $f_0$ maps exact sequences of modules with a good filtration to exact sequences.

We define the {\it inverse symplectic Schur functor}
$$g_0:{\rm mod}(B_r)\to{\rm mod}(S_0(n,r))$$ by
\begin{align*}
&g_0(V)=E^{\otimes r}\otimes_{B_r}V.
\end{align*}

By \cite[Thm~2.11]{Rot} we have for $V\in{\rm mod}(B_r)$ and $M\in{\rm mod}(S_0(n,r))$
\begin{equation}\label{eq.adjointiso}
\Hom_{\Sp_n}(g_0(V),M)\cong\Hom_{B_r}(V,f_0(M)).
\end{equation}
There is an alternative for $f_0$ and $g_0$:
$$\tilde f_0(M)=E^{\otimes r}\otimes_{S_0(n,r)}M\text{\quad and\quad }\tilde{g_0}(V)=\Hom_{B_r}(E^{\otimes r},M).$$
But, by \cite[Lemma~3.60]{Rot}, we have $\tilde{f}_0(V^*)\cong f_0(V)^*$ and $\tilde{g}_0(V^*)\cong g_0(V)^*$.
So the results obtained using $\tilde{f}_0$ and $\tilde{g}_0$ can also be obtained by dualizing the results obtained using $f_0$ and $g_0$. We sketch a proof of the following exercise in Brauer's Formula.

\begin{lem}\label{lem.dimension}
Let $\lambda$ be a partition of $t=r-2s$. Then
$$\dim\Hom_{\Sp_n}(\Delta_0(\lambda),E^{\otimes r})=\dim\Hom_{\Sp_n}(E^{\otimes r},\nabla_0(\lambda))=\frac{r!}{s!t!2^s}\dim S(\lambda).$$
\end{lem}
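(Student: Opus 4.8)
The plan is to prove the two equalities of dimensions separately and then compute the common value. By the self-duality of $E$ as an $\Sp_n$-module (so $E^{\otimes r}$ is self-dual) and the fact that $\Delta_0(\lambda)^*\cong\nabla_0(\lambda)$ (standard and costandard modules are interchanged by the duality on ${\rm mod}(S_0(n,r))$), applying the functor $\Hom_{\Sp_n}(-,-)$ and dualizing gives
$$\Hom_{\Sp_n}(\Delta_0(\lambda),E^{\otimes r})\cong\Hom_{\Sp_n}(E^{\otimes r},\Delta_0(\lambda)^*)^*\cong\Hom_{\Sp_n}(E^{\otimes r},\nabla_0(\lambda))^*,$$
so the first two $\dim$'s agree. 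Hence it suffices to compute $\dim\Hom_{\Sp_n}(E^{\otimes r},\nabla_0(\lambda))$.

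To compute this I would use that $E^{\otimes r}$ is a tilting module for $\Sp_n$ (noted in the excerpt: $E=\nabla_0(\e_1)=\Delta_0(\e_1)$ is tilting, hence so is $E^{\otimes r}$). For any module $T$ with a good filtration and any $\lambda\in\Lambda_0^+(m,r)$, the dimension of $\Hom_{\Sp_n}(E^{\otimes r},T)$ counts the multiplicity of $\nabla_0(\lambda)$ in a good filtration of $T$ if we instead probe with $\Delta_0(\lambda)$; concretely $\dim\Hom_{\Sp_n}(\Delta_0(\mu),E^{\otimes r})$ equals the multiplicity of $\nabla_0(\mu)$ in a good filtration of $E^{\otimes r}$, using $\Ext^1_{\Sp_n}(\Delta_0(\mu),\nabla_0(\nu))=0$ for all $\mu,\nu$ together with $\dim\Hom_{\Sp_n}(\Delta_0(\mu),\nabla_0(\nu))=\delta_{\mu\nu}$. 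So by the first paragraph the common value is the multiplicity $[E^{\otimes r}:\nabla_0(\lambda)]$ of $\nabla_0(\lambda)$ in a good filtration of $E^{\otimes r}$.

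Now I would invoke the well-known decomposition rule (this is the ``exercise in Brauer's Formula'' the lemma alludes to): in the Grothendieck group of modules with a good filtration, $[E^{\otimes r}:\nabla_0(\lambda)]$ equals the number of ways of building $\lambda$ from the empty partition in $r$ steps, where at each step one either adds a box (a copy of $E=\nabla_0(\e_1)$) via Pieri, or removes a box (the extra contraction terms coming from the symplectic/orthogonal branching $\nabla_0(\mu)\otimes E$), equivalently the standard ``up-down tableaux'' count. Writing $\lambda\vdash t=r-2s$, this number is $\binom{r}{2s}(2s-1)!!\,f^\lambda$, where $f^\lambda=\dim S(\lambda)$ is the number of standard Young tableaux of shape $\lambda$: one chooses which $2s$ of the $r$ steps are ``removal'' steps — but more cleanly, one inserts $t$ ``add'' steps forming a standard tableau of shape $\lambda$ and pairs up the remaining $2s$ positions, and a short bookkeeping argument (or direct appeal to the combinatorial identity underlying $\dim\mc S(\lambda)=\frac{r!}{s!t!2^s}\dim S(\lambda)$, already recorded in Subsection~\ref{ss.braueralgebra}) shows this count equals $\frac{r!}{s!\,t!\,2^s}\dim S(\lambda)$.

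The main obstacle is the combinatorial/representation-theoretic bookkeeping in the last step: one must be careful that the multiplicities of $\nabla_0(\lambda)$ in $E^{\otimes r}$ really are governed by up-down tableaux in positive characteristic. This follows because the relevant multiplicities are characteristic-independent — $E^{\otimes r}$ is tilting, so $[E^{\otimes r}:\nabla_0(\lambda)]$ can be computed in the Grothendieck group and agrees with the characteristic-zero answer, where the branching rule $\nabla_0(\mu)\otimes E\cong\bigoplus(\text{add a box})\oplus\bigoplus(\text{remove a box})$ is classical (Littlewood). Alternatively, one can bypass the branching rule entirely: by the isomorphism \eqref{eq.Z_siso}, $Z_s\otimes k_{\rm sg}\cong\Hom_{\Sp_n}(E^{\otimes r},E^{\otimes t})/\varphi(I_{s,1})$ as right $k\Sym_t$-modules, and from Proposition~\ref{prop.filtration} together with the dimension count $\dim\mc S(\lambda)=\frac{r!}{s!t!2^s}\dim S(\lambda)$ one reads off the answer directly by decomposing into Schur-functor images; this is likely the cleaner route and is presumably the ``sketch'' the authors have in mind. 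Either way the numerical answer is forced once one knows the common dimension is the multiplicity $[E^{\otimes r}:\nabla_0(\lambda)]$.
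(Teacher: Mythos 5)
Your argument is correct and is essentially the paper's: both reduce the statement to computing the coefficient of $\schi(\lambda)$ in the expansion of ${\rm ch}\,E^{\otimes r}=\schi(1)^r$ into Weyl characters (the paper obtains the first equality by noting that both Hom-dimensions equal this coefficient, $E^{\otimes r}$ being tilting, rather than by your duality argument), and both evaluate that coefficient by the add-or-remove-a-box branching rule, i.e.\ Brauer's formula, the paper merely packaging the enumeration as the recursion $\schi(1)\psi_r=\psi_{r+1}+r\psi_{r-1}$ for $\psi_r=\sum_{\lambda\vdash r}\deg(\chi^\lambda)\schi(\lambda)$ instead of your oscillating-tableaux count (which gives the same number $\tfrac{r!}{s!t!2^s}\dim S(\lambda)$). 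One caution: the \lq\lq cleaner alternative" you suggest via \eqref{eq.Z_siso} and the diagram basis of $\Hom_{\Sp_n}(E^{\otimes r},E^{\otimes t})$ is not what the authors have in mind and would be circular as the paper is organised, since the surjectivity statement Lemma~\ref{lem.surjective}(i) one would need there is itself proved using the present dimension count.
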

\begin{proof}
First note that, since $E^{\otimes r}$ has a good filtration, as an $\Sp_n$-module, the dimension of $\Hom_{\Sp_n}(\Delta_0(\lambda),E^{\otimes})$ is equal to the multiplicity of $\nabla_0(\lambda)$ in a good filtration of $E^{\otimes r}$. Similar remarks apply to the dimension of $\Hom_{\Sp_n}(E^{\otimes r},\nabla_0(\lambda))$. For a partition $\lambda$, with at most $m$ parts, we write $\schi(\lambda)$ for the formal character of $\nabla_0(\lambda)$. We have to show that the coefficient of $\schi(\lambda)$ in an expression of $\ch\,E^{\otimes r}$ as a ${\mb Z}$-linear combination of Weyl characters, is
$\displaystyle{\frac{r!}{s!t!2^s}}\dim S(\lambda)$. For a partition
$\lambda$ of $r$ we denote by $\chi^\lambda$ the corresponding character of $\Sym_r$. Then, for $r\geq 1$,  by the branching rule, we have
$$\chi^\lambda\downarrow_{\Sym_r}^{\Sym_{r+1}}=\sum_\mu \chi^\mu$$
where the sum is over all partitions $\mu$ of $r$ such that the diagram of $\mu$ is obtained from that of $\lambda$ by the removal of one box. By Frobenius reciprocity we have
$$\chi^\lambda\uparrow_{\Sym_{r-1}}^{\Sym_r}=\sum_\mu \chi^\mu$$
where $\mu$ ranges over those partitions of $r$ whose diagram is obtained by adding one box. In particular the degree of $\chi^\lambda$ is given by
\begin{align*}
r\deg(\chi^\lambda)&=\deg(\chi^\lambda\uparrow_{\Sym_{r-1}}^{\Sym_r})\\
&=\sum_\mu \deg(\chi^\mu).
\end{align*}

Now we define $\psi_0=1$ and for  $1\leq r\leq m$ we define  $$\psi_r=\sum_\lambda \deg(\chi^\lambda)\schi(\lambda)$$  where the sum is over all partitions of $r$. We claim that, for $1\leq r<m$, we have
\begin{equation}
\schi(1)\psi_r=\psi_{r+1}+r\psi_{r-1}.\tag{*}
\end{equation}
By Brauer's Formula (see e.g. \cite[Lem. II.5.8 b)]{Jan}) we have  $$\schi(1)\psi_r=\sum_{i=1}^m\sum_\mu\deg(\chi^\mu)\schi(\mu+\se_i)+\sum_{i=1}^m\sum_\mu
\deg(\chi^\mu)\schi(\mu-\se_i),$$
where in both sums $\mu$ ranges over partitions of $r$. One easily checks that $\schi(\mu\pm\se_i)\ne0$ if and only if $\mu\pm\se_i$ is a partition. 
For a partition $\lambda$ of $r+1$, we see that the coefficient of $\schi(\lambda)$ in $\schi(1)\psi_r$ is $\sum_\mu \deg(\chi^\mu)$, where $\mu$ ranges over partitions of $r$ such that the diagram of $\mu$ is obtained from the diagram of $\lambda$ by removing a box. So this coefficient is $\deg(\chi^\lambda)$. For a partition $\lambda$ of $r-1$, the coefficient of $\schi(\lambda)$ in $\schi(1)\psi_r$ is $\sum_\mu\deg(\chi^\mu)$, where $\mu$ ranges over all partitions of $r$ such that the diagram of $\mu$ is obtained from the diagram of $\lambda$ by adding a box. So the coefficient of $\schi(\lambda)$ is $r\deg(\chi^\lambda)$. This proves (*).

We leave it to the reader to use (*) to prove by induction that
\begin{align*}
\schi(1)^r&=\psi_r+\frac{r(r-1)}{2}\psi_{r-2}+\cdots\\
&=\sum_{s=0}^{\lfloor r/2\rfloor}\frac{r!}{2^ss!(r-2s)!}\psi_{r-2s}.
\end{align*}
From this  we get as required that the  coefficient $a_r(\lambda)$ in the expression  $$\schi(1)^r=\sum_\lambda a_r(\lambda)\schi(\lambda)$$  is $\displaystyle\frac{r!}{2^ss!t!}\deg(\chi^\lambda)$, where $|\lambda|=t=r-2s$.
\end{proof}

Recall that induced modules for a reductive group can be realized in the algebra of regular functions of the group. Let $\lambda$ be a partition of $r$. In \cite[Prop.~1.4]{Don4} it was proved that restriction of functions induces an epimorphism $\nabla(\lambda)\to\nabla_0(\lambda)$ of $\Sp_n$-modules. Now we can form a commutative diagram as below where the vertical maps are induced by the restriction of functions $\nabla(\lambda)\to\nabla_0(\lambda)$ and the horizontal maps are evaluation at $e_1\otimes\cdots\otimes e_r$.

\begin{equation}\label{eq.sympschuriso}
\vcenter{
\xymatrix @R=30pt @C=15pt @M=6pt{
\Hom_{\GL_n}(E^{\otimes r},\nabla(\lambda))\ar[r]\ar[d]&
\nabla(\lambda)_{\varpi_r}\ar[d]\\
\Hom_{\Sp_n}(E^{\otimes r},\nabla_0(\lambda))\ar[r]&
\nabla_0(\lambda)_{\varpi_r}
}}
\end{equation}
Here $\nabla(\lambda)_{\varpi_r}$ denotes the $\varpi_r$-weight space of $\nabla(\lambda)$ with respect to $T$ and $\nabla_0(\lambda)_{\varpi_r}$ denotes the $\varpi_r$-weight space of $\nabla_0(\lambda)$ with respect to $T_0$.
\begin{lem}\label{lem.diagram}\
\begin{enumerate}[{\rm (i)}]
\item Let $M$ be a homogeneous polynomial $T$-module of degree $r$ and let $\mu\in\mb N^m$ with $|\mu|=r$. Then the $\mu$-weight space of $M$ with respect to $T$ is the same as that with respect to $T_0$.
\item Let $\lambda$ be a partition of $r$ and let $\mu\in\mb N^m$ with $|\mu|=r$. The restriction of functions $\nabla(\lambda)_\mu\to\nabla_0(\lambda)_\mu$ on the $\mu$-weight spaces with respect to $T_0$ is an isomorphism.
\item All maps in \eqref{eq.sympschuriso} are isomorphisms.
\end{enumerate}
\end{lem}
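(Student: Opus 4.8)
The plan is to establish (i) and (ii) directly and then read off (iii) by chasing the square \eqref{eq.sympschuriso}; the only real work will be in (ii).

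\emph{Part (i).} I would decompose $M$ into its $T$-weight spaces, $M=\bigoplus_\nu M_\nu$ over $\nu\in\mb N^n$ with $|\nu|=r$ (this is where homogeneity of degree $r$ enters). Since $T_0\subseteq T$ and the character $\nu$ restricts to $\ov\nu$ on $T_0$, each $M_\nu$ is contained in the $\ov\nu$-weight space of $M$ with respect to $T_0$; hence the $\mu$-weight space of $M$ with respect to $T_0$ is $\bigoplus_{\nu:\,\ov\nu=\mu}M_\nu$. The crux is then the elementary remark that, for $\mu\in\mb N^m$ with $|\mu|=r$, the only $\nu\in\mb N^n$ with $|\nu|=r$ and $\ov\nu=\mu$ is $\nu=\mu$ itself: the equation $\ov\nu=\mu$ says $\nu_i-\nu_{i'}=\mu_i$ for $i\le m$, and summing these gives $\sum_{i\le m}\nu_i-\sum_{i>m}\nu_i=|\mu|=r$, which combined with $\sum_{i\le m}\nu_i+\sum_{i>m}\nu_i=|\nu|=r$ forces $\nu_{m+1}=\cdots=\nu_n=0$, so $\nu=\mu$. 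Therefore the two weight spaces coincide.

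\emph{Part (ii).} By \cite[Prop.~1.4]{Don4} the restriction-of-functions map $\pi\colon\nabla(\lambda)\to\nabla_0(\lambda)$ is surjective, and being $\Sp_n$-equivariant it is surjective on every $T_0$-weight space; so it suffices to match dimensions. By (i), $\dim\nabla(\lambda)_\mu$ computed for $T_0$ equals the dimension computed for $T$, namely the Kostka number $K_{\lambda\mu}$. To identify $\dim\nabla_0(\lambda)_\mu$ I would restrict $\nabla(\lambda)$, as a $\GL_n$-module, to $\Sp_n$: by the branching rule for $\GL_{2m}\downarrow\Sp_{2m}$ it has a good filtration in which $\nabla_0(\lambda)$ occurs exactly once and every other section is a $\nabla_0(\nu)$ with $|\nu|<r$. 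Comparing $\mu$-weight multiplicities (with respect to $T_0$) and using (i) once more on the left, the contributions of the sections with $|\nu|<r$ vanish: every weight $\eta$ of $\nabla_0(\nu)$ is obtained from $\nu$ by subtracting positive roots of $\Sp_n$, each of coordinate-sum $0$ or $2$, so $\sum_k\eta_k\le|\nu|<r$, whereas a weight $\mu\in\mb N^m$ with $|\mu|=r$ has coordinate-sum $r$ and so cannot occur in $\nabla_0(\nu)$. Hence $\dim\nabla_0(\lambda)_\mu=K_{\lambda\mu}=\dim\nabla(\lambda)_\mu$, and together with surjectivity this gives the desired isomorphism. I expect this to be the main obstacle: everything else is formal, but here one genuinely needs the branching behaviour of $\GL_{2m}\downarrow\Sp_{2m}$ (equivalently, that the top-degree part of a symplectic Weyl character is the corresponding $\GL_m$-Schur polynomial) together with the coordinate-sum bound on weights.

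\emph{Part (iii).} Finally I would chase the commutative square \eqref{eq.sympschuriso}. Its top arrow is the Schur functor isomorphism \eqref{eq.schuriso} applied to the polynomial $\GL_n$-module $\nabla(\lambda)$ of degree $r$ (legitimate since $n=2m\ge r$), and, using (i) to identify the $T$- and $T_0$-weight spaces of $\nabla(\lambda)$, its right arrow is exactly the map treated in (ii) with $\mu=\varpi_r=(1^r)\in\mb N^m$, hence an isomorphism. By commutativity the composite of the left arrow with the bottom arrow equals an isomorphism, so the left arrow is injective and the bottom arrow surjective. To finish I would compare dimensions across the bottom arrow: by Lemma~\ref{lem.dimension} with $s=0$ and $t=r$ we have $\dim\Hom_{\Sp_n}(E^{\otimes r},\nabla_0(\lambda))=\dim S(\lambda)$, while $\dim\nabla_0(\lambda)_{\varpi_r}=\dim\nabla(\lambda)_{\varpi_r}$ by (i) and the latter is the Kostka number $K_{\lambda,(1^r)}$, i.e.\ the number of standard tableaux of shape $\lambda$ (as $|\lambda|=r$), which equals $\dim S(\lambda)$ as well. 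A surjection of spaces of equal finite dimension is an isomorphism, so the bottom arrow is an isomorphism, and then the left arrow, being an isomorphism followed by this one, is too. Thus all four maps in \eqref{eq.sympschuriso} are isomorphisms.
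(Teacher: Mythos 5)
Your proposal is correct, and parts (i) and (iii) run exactly as in the paper: the same counting argument ($\ov\nu=\mu$ together with $|\nu|=|\mu|=r$ and $\nu\in\mb N^n$ forces $\nu=\mu$) for (i), and the same diagram chase plus dimension comparison via Lemma~\ref{lem.dimension} for (iii). The genuine difference is in (ii), where both arguments reduce to showing $\dim\nabla_0(\lambda)_\mu=\dim\nabla(\lambda)_\mu$ but use different inputs to compute the left-hand side. The paper counts King's symplectic standard tableaux (citing \cite{King} and \cite[Thm.~2.3b]{Don4}) and observes that for content in $\mb N^m$ the symplectic condition is vacuous, so the count collapses to the Kostka number. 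You instead invoke the good $\Sp_n$-filtration of $\nabla(\lambda)$ with Littlewood branching multiplicities and kill the lower sections by the coordinate-sum bound on weights of $\nabla_0(\nu)$; this is valid (the multiplicities are character-determined, hence given by the characteristic-zero Littlewood restriction rule, which applies since $l(\lambda)\le r\le m$), and the coordinate-sum argument is clean. Two remarks. First, be careful to source the branching statement externally: within this paper the fact that $(\nabla(\lambda)|_{\Sp_n}:\nabla_0(\nu))>0$ forces $\nu=\lambda$ or $|\nu|<|\lambda|$ is \emph{deduced from} Lemma~\ref{lem.diagram}(ii) in the proof of Proposition~\ref{prop.sympyoung}(iii), so quoting it here without an independent reference would be circular; citing Littlewood's rule (plus the characteristic-independence of good filtration multiplicities) avoids this. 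Second, what each approach buys: the paper's route needs only a weight-multiplicity formula for a single $\nabla_0(\lambda)$ and no information about the other sections of the restriction, whereas your route trades that for the full branching data but makes the "top-degree" mechanism more transparent and meshes naturally with the later filtration arguments in Remarks~\ref{rems.sympschur} and Proposition~\ref{prop.sympyoung}.
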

\begin{proof}
(i).\ A weight $\mu$ of $T$ vanishes on $T_0$ if and only if $\mu_i=\mu_{i'}$ for all $i\in\{1,\ldots,n\}$. So if $\mu$ and $\nu$ are weights of $T$ such that $\mu$ is polynomial, $\nu\in\mb Z^m$ , $|\mu|=|\nu|$ and $\mu|_{T_0}=\nu|_{T_0}$, then $\mu=\nu$.\\
(ii).\ Clearly $\nabla(\lambda)\to\nabla_0(\lambda)$ induces a surjection on the weight spaces for $T_0$. So it suffices to show that $\nabla(\lambda)_\mu$ and $\nabla_0(\lambda)_\mu$ have the same dimension. Note that, by (i), $\nabla(\lambda)_\mu$ is also the $\mu$-weight space with respect to $T$. Let $\mu\in\mb N^m$ with $|\mu|=r$. By \cite[4.5a]{Green} $\dim\nabla(\lambda)_\mu$ is the number of standard $\lambda$-tableaux of content $\mu$ and by \cite[\S4]{King} (or \cite[Thm.~2.3b]{Don4}) $\dim\nabla_0(\lambda)_\mu$ is the number of symplectic standard $\lambda$-tableaux of which the content $\nu$ satisfies $\ov\nu=\mu$. Here a tableau is called {\it symplectic standard} if it is standard for the ordering $1'<1<2'<2\cdots<m'<m$ of $\{1,\ldots,n\}$ and if for each $i\in\{1,\ldots,m\}$, $i$ and $i'$ only occur in the first $i$ rows. The second condition is vacuous if the content $\nu$ is in $\mb N^m$, since then $m+1,\ldots,n$ don't occur in a $\lambda$-tableau of content $\nu$. Since $\mu\in\mb N^m$, we have that $\ov\nu=\mu$ implies $\nu=\mu$ by the proof of (i). So the two dimensions are the same.\\
(iii).\ That the horizontal map in the top row of \eqref{eq.sympschuriso} is an isomorphism was pointed out before; see \eqref{eq.schuriso}. The vertical map on the right is an isomorphism by (ii). It follows that the horizontal map in the bottom row is surjective. But then it must be an isomorphism by Lemma~\ref{lem.dimension}. Now the vertical map on the left must also be an isomorphism, since it is a composite of isomorphisms.
\end{proof}

For $\lambda\in\mb N^l$ we put $S^\lambda E=S^{\lambda_1}E\otimes\cdots\otimes S^{\lambda_l}E$ and $\bigwedge{\Nts}^\lambda E=\bigwedge{\Nts}^{\lambda_1}E\otimes\cdots\otimes \bigwedge{\Nts}^{\lambda_l}E$.

\begin{lem}\label{lem.surjective}
Recall that $t=r-2s$. The following holds.
\begin{enumerate}[{\rm(i)}]
\item Let $\lambda$ be a partition of $t$. Then the canonical homomorphism
$$\Hom_{\Sp_n}(E^{\otimes r},E^{\otimes t})\otimes_{k\Sym_t}\Hom_{\Sp_n}(E^{\otimes t},\nabla_0(\lambda))\to\Hom_{\Sp_n}(E^{\otimes r},\nabla_0(\lambda)),$$
given by composition, is surjective.
\item Let $M$ be an $S(n,t)$-module. The canonical homomorphism
$$\Hom_{\Sp_n}(E^{\otimes r},E^{\otimes t})\otimes_{k\Sym_t}\Hom_{\GL_n}(E^{\otimes t},M)\to\Hom_{\Sp_n}(E^{\otimes r},M),$$
given by composition, is an isomorphism if $M$ is a direct sum of direct summands of $E^{\otimes t}$ and it is surjective if $M$ is injective.
\end{enumerate}
\end{lem}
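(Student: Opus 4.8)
The plan is to reduce everything to statements about $\Sp_n$-modules with good filtrations and then use the diagram bases for the Hom-spaces together with the tilting module property of $E^{\otimes r}$. First I would treat part (ii), since part (i) will come out as a special case once $M$ is replaced by $\nabla_0(\lambda)$ and one keeps track of one extra surjectivity.

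For (ii), write $H_{a,b}=\Hom_{\Sp_n}(E^{\otimes a},E^{\otimes b})$, which by the discussion preceding \eqref{eq.I_siso} has a basis indexed by $(b,a)$-diagrams. Composition of diagrams gives the multiplication map in question. The key observation is that the composite
$$H_{t,r}\otimes_{k\Sym_t}\Hom_{\GL_n}(E^{\otimes t},M)\to\Hom_{\Sp_n}(E^{\otimes r},M)$$
is functorial in $M$ and is visibly an isomorphism when $M=E^{\otimes t}$: indeed, on the right one gets $H_{r,t}$ with its diagram basis, and on the left one has $H_{t,r}\otimes_{k\Sym_t}\End_{\GL_n}(E^{\otimes t})=H_{t,r}\otimes_{k\Sym_t}k\Sym_t\cong H_{t,r}$ (using $n\ge r\ge t$ so the symmetric group surjection $k\Sym_t\to\End_{\GL_n}(E^{\otimes t})$ is an isomorphism), and composition of diagrams is exactly the identification sending a $(t,r)$-diagram to the corresponding $(r,t)$-diagram. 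Both sides are additive in $M$ and commute with finite direct sums, so the map is an isomorphism whenever $M$ is a direct sum of direct summands of $E^{\otimes t}$. To get surjectivity for $M$ injective: an injective polynomial $\GL_n$-module of degree $t$ is a direct summand of a direct sum of modules $S^\mu E$, $\mu\in\Lambda(n,t)$; so it suffices to prove surjectivity for $M=S^\mu E$. Now $S^\mu E=S^{\mu_1}E\otimes\cdots$ is a direct summand of $E^{\otimes t}$ precisely when $t!$ is invertible in $k$ — which need not hold — so instead I would argue as follows. There is a surjection $E^{\otimes t}\to S^\mu E$ of $\GL_n$-modules; since $E^{\otimes t}$ is a tilting $\Sp_n$-module and $S^\mu E$ has a good $\Sp_n$-filtration (it is a $\nabla$-filtered $\GL_n$-module, hence $\nabla_0$-filtered as an $\Sp_n$-module by \cite[Prop.~1.4]{Don4} and Lemma~\ref{lem.diagram}), the functor $\Hom_{\Sp_n}(E^{\otimes r},-)$ is exact on it, so $\Hom_{\Sp_n}(E^{\otimes r},E^{\otimes t})\to\Hom_{\Sp_n}(E^{\otimes r},S^\mu E)$ is onto. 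Combining with the $M=E^{\otimes t}$ isomorphism and naturality gives surjectivity for $M=S^\mu E$, hence for every injective $M$.

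For (i), apply the surjectivity part of (ii) with $M=\nabla_0(\lambda)$ — here I must be slightly careful because $\nabla_0(\lambda)$ is an $\Sp_n$-module that is not literally an $S(n,t)$-module, so instead I would use Lemma~\ref{lem.diagram}(iii), which identifies $\Hom_{\Sp_n}(E^{\otimes t},\nabla_0(\lambda))$ with $\Hom_{\GL_n}(E^{\otimes t},\nabla(\lambda))$ compatibly with the $k\Sym_t$-actions, and the fact (same tilting argument as above, now applied directly since $\nabla_0(\lambda)$ has a good filtration) that $\Hom_{\Sp_n}(E^{\otimes r},E^{\otimes t})\otimes_{k\Sym_t}\Hom_{\Sp_n}(E^{\otimes t},\nabla_0(\lambda))\to\Hom_{\Sp_n}(E^{\otimes r},\nabla_0(\lambda))$ factors the surjection $\Hom_{\Sp_n}(E^{\otimes r},E^{\otimes t})\otimes\nabla_0(\lambda)\twoheadrightarrow\cdots$ through the tensor-over-$k\Sym_t$. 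Concretely: choose a surjection $E^{\otimes t}\to\nabla_0(\lambda)$ (it exists since $\nabla_0(\lambda)$ appears in a good filtration of $E^{\otimes t}$, by Lemma~\ref{lem.dimension} its multiplicity is positive when $\lambda$ has $\le m$ parts), apply $\Hom_{\Sp_n}(E^{\otimes r},-)$, which is exact here, and observe the resulting surjection $\Hom_{\Sp_n}(E^{\otimes r},E^{\otimes t})\twoheadrightarrow\Hom_{\Sp_n}(E^{\otimes r},\nabla_0(\lambda))$ factors through the composition map.

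The main obstacle I anticipate is (ii) in the case where $\mathrm{char}\,k$ divides $t!$, where $S^\mu E$ is genuinely not a summand of $E^{\otimes t}$ and one cannot argue by pure diagram combinatorics; the resolution is precisely the good-filtration/tilting input of the previous subsection (exactness of $\Hom_{\Sp_n}(E^{\otimes r},-)$, which holds because $E^{\otimes r}$ is tilting and the relevant modules are good-filtered), combined with naturality in $M$ to transport the clean isomorphism at $M=E^{\otimes t}$. A secondary subtlety is checking that the composition map is actually $k\Sym_t$-balanced and that, under the diagram bases, composing a $(t,r)$-diagram with a $(r,t)$-diagram through an intermediate $E^{\otimes t}$ recovers the identity on $H_{r,t}$; this is the "exercise in Brauer's formula" flavour and I would state it, cite \cite[Sect.~3]{T}, and leave the bookkeeping to the reader.
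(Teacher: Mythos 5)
Your part (ii) is essentially the paper's own argument: establish the isomorphism at $M=E^{\otimes t}$ via $\End_{\GL_n}(E^{\otimes t})\cong k\Sym_t$, pass to direct sums of direct summands, reduce the injective case to $M=S^\mu E$ using \cite[Lem.~3.4(i)]{Don6}, and transport surjectivity along the epimorphism $E^{\otimes t}\to S^\mu E$ using that its kernel has a good filtration and that $\Hom_{\Sp_n}(E^{\otimes r},-)$ is exact on such sequences because $E^{\otimes r}$ is a tilting module. That part is fine.

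Part (i) is where you diverge from the paper, and where the one genuine gap sits. The paper proves (i) by a hands-on count: it uses the diagram basis of $\Hom_{\Sp_n}(E^{\otimes r},E^{\otimes t})$, the non-crossing diagrams $p_d$, and evaluation at $e_1\otimes\cdots\otimes e_t$ (Lemma~\ref{lem.diagram}(iii)) to exhibit $\frac{r!}{s!t!2^s}\dim S(\lambda)$ linearly independent elements of the image, which is the full dimension by Lemma~\ref{lem.dimension}. Your route instead asks for a surjection $q:E^{\otimes t}\to\nabla_0(\lambda)$, notes that $q_*$ is surjective by the same $\Ext^1$-vanishing, and observes that $q\circ\psi$ lies in the image of the composition map (being the image of $\psi\otimes q$). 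That factoring step is correct and would give a shorter proof --- but your justification for the existence of $q$ is not valid. The fact that $\nabla_0(\lambda)$ occurs with positive multiplicity in a good filtration of $E^{\otimes t}$ does not produce a surjection onto it: in a good filtration the sections $\nabla_0(\mu)$ with $\mu$ maximal can be arranged at the \emph{bottom} (as submodules), not at the top, and for $\lambda$ not $p$-regular $T_0(\lambda)$ is not even a direct summand of $E^{\otimes t}$ (Proposition~\ref{prop.sympmult} with $r=t$), so no soft filtration-multiplicity argument will hand you the quotient. The surjection does exist, but it needs a real input: the Carter--Lusztig construction realizes $\Delta(\lambda)$ as a $\GL_n$-submodule of $E^{\otimes t}$, so by contravariant self-duality of $E^{\otimes t}$ the costandard module $\nabla(\lambda)$ is a $\GL_n$-quotient of $E^{\otimes t}$, and composing with the restriction epimorphism $\nabla(\lambda)\to\nabla_0(\lambda)$ of \cite[Prop.~1.4]{Don4} (quoted just before \eqref{eq.sympschuriso}) gives $q$. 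With that substitution your proof of (i) goes through and is a genuinely different, more homological alternative to the paper's explicit diagram computation.
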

\begin{proof}
(i). By Lemma~\ref{lem.dimension} it suffices to give a family of $\frac{r!}{s!t!2^s}\dim S(\lambda)$ elements of $\Hom_{\Sp_n}(E^{\otimes r},E^{\otimes t})\otimes_{k\Sym_t}\Hom_{\Sp_n}(E^{\otimes t},\nabla_0(\lambda))$ which is mapped to an independent family in $\Hom_{\Sp_n}(E^{\otimes r},\nabla_0(\lambda))$. As pointed out before, $\Hom_{\Sp_n}(E^{\otimes r},E^{\otimes t})$ has a basis indexed by $(t,r)$-diagrams. Let $D$ be the set of $(t,r)$-diagrams that have no horizontal edges in the top row and whose vertical edges do not cross, and let $(p_d)_{d\in D}$ be the corresponding family of basis elements in $\Hom_{\Sp_n}(E^{\otimes r},E^{\otimes t})$. Let $(u_i)_{\in I}$ be a basis of $\Hom_{\Sp_n}(E^{\otimes t},\nabla_0(\lambda))$. We have $\Hom_{\Sp_n}(E^{\otimes t},\nabla_0(\lambda))\cong S(\lambda)$ by Lemma~\ref{lem.diagram}(iii) (with $r=t$), $|D|=\frac{r!}{s!t!2^s}$ and $p_d\otimes u_i$ is mapped to $u_i\circ p_d$. So it suffices to show that the elements $u_i\circ p_d$, $d\in D$, $i\in I$, are linearly independent. So assume $\sum_{i,d}a_{id}\,u_i\circ p_d=0$ for certain $a_{id}\in k$. Consider the following diagram $d_0\in D$:
$$
\begin{xy}
(-48,2.8)*={d_0=};
(-20.4,2.8)*={\xymatrix @R=14pt @C=14pt @M=-2pt{
{\bullet}\ar@{-}[1,0]&\cdots&{\bullet}\ar@{-}[1,0]\\
{\bullet}&\cdots&{\bullet}&{\bullet}\ar@{-}[0,1]&{\bullet}&\cdots&{\bullet}\ar@{-}[0,1]&{\bullet}
}};
(-34,2.8)*=<47pt,28pt>{%
}*\frm{_\}};%
(-34,-5.2)*{\text{$t$ vertices}};
(-9.9,2.8)*=<77pt,28pt>{%
}*\frm{_\}};%
(-9.9,-5.2)*{\text{$2s$ vertices}}
\end{xy}\quad.
$$
Put $$v_0=e_1\otimes\cdots\otimes e_t\otimes e_{t+1}\otimes e_{(t+1)'}\otimes\cdots\otimes e_{t+s}\otimes e_{(t+s)'}.$$
Then we have for $d\in D$ that $p_d(v_0)=e_1\otimes\cdots\otimes e_t$ if $d=d_0$ and $0$ otherwise. It follows that $\sum_ia_{id_0}u_i(e_1\otimes\cdots\otimes e_t)=0$. By Lemma~\ref{lem.diagram}(iii) evaluation at $e_1\otimes\cdots\otimes e_t$ is injective on $\Hom_{\Sp_n}(E^{\otimes t},\nabla_0(\lambda))$, so $a_{id_0}=0$ for all $i\in I$. Since we can construct a similar vector for any other $d\in D$ it follows that $a_{id}=0$ for all $i\in I$ and $d\in D$.
\hfil\break
(ii).\ The class of $S(n,t)$-modules $M$ for which this homomorphism is an isomorphism, is closed under taking direct summands and direct sums. The same holds for the class of $S(n,t)$-modules $M$ for which this homomorphism is surjective. By \cite[Lem.~3.4(i)]{Don6} every injective $S(n,t)$-module is a direct sum of direct summands of some $S^\lambda E$, $\lambda\in\Lambda^+(n,t)$. Furthermore, $\End_{\GL_n}(E^{\otimes t})\cong k\Sym_t$. So it suffices now to show that the homomorphism is surjective if $M=S^\lambda E$, $\lambda\in\Lambda^+(n,t)$.

Denote $\Hom_{\Sp_n}(E^{\otimes r},E^{\otimes t})$ by $\mc H$ and the Schur functor $\Hom_{\GL_n}(E^{\otimes t},-)$ by $f$. Let $0\to M\to N\to P\to 0$ be a short exact sequence of $S(n,t)$-modules with a good filtration. Then we have the following diagram
$$
\xymatrix @R=30pt @C=15pt @M=6pt{
{\mc H}\otimes_{k\Sym_t}f(M)\ar[r]\ar[d]&
{\mc H}\otimes_{k\Sym_t}f(N)\ar[r]\ar[d]&
{\mc H}\otimes_{k\Sym_t}f(P)\ar[r]\ar[d]&0\\
f_0(M)\ar[r]&
f_0(N)\ar[r]&
f_0(P)\ar[r]&0
}
$$
with rows exact, because $f$ is exact and $f_0$ is exact on modules with a good filtration. Here we have used that a $\GL_n$-module with a good $\GL_n$-filtration, also has a good $\Sp_n$-filtration; see \cite[App.~A]{Don5}. We deduce that if the homomorphism in (ii) is surjective for $N$, then it is surjective for $P$. Since the kernel of the canonical epimorphism $E^{\otimes t}\to S^\lambda E$ has a good $\GL_n$-filtration by \cite[2.1.15(ii)(b)]{Don7}, we are done.\\
\end{proof}

In the theorem below $f$ denotes the Schur functor from ${\rm mod}(S(n,t))$ to ${\rm mod}(k\Sym_t)$. Note that the second isomorphism in assertion (i) implies that when ${\rm char}\,k\ne2$ and $M$ is an injective $S(n,t)$-module, the homomorphism in Lemma~\ref{lem.surjective}(ii) is an isomorphism.

\begin{thm}\label{thm.sympschur}
Recall that $m\ge r$. The following holds.
\begin{enumerate}[{\rm(i)}]
\item For $\lambda\in\Lambda^+_0(m,r)$ we have
\begin{align*}
f_0(\nabla_0(\lambda))&\cong \widetilde{\mc S}(\lambda),\\
f_0(S^\lambda E)&\cong \widetilde{\mc M}(\lambda)\text{\quad if ${\rm char}\,k\ne2$, and}\\
f_0(\bigwedge{\Nts}^\lambda E)&\cong \mc M(\lambda)\text{\quad if ${\rm char}\,k=0$ or $>|\lambda|$.}
\end{align*}
\item Let $M$ be an $S(n,t)$-module. If $M$ is a direct sum of direct summands of $E^{\otimes t}$ or if ${\rm char}\,k\ne2$ and $M$ is injective, then
$$f_0(M)\cong {\rm Ind}^{B_r}_{k\Sym_t}\big(k_{\rm sg}\otimes f(M)\big).$$
\end{enumerate}
\end{thm}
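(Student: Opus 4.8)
The strategy is to reduce everything to the two key computational inputs already established: the isomorphism \eqref{eq.Z_siso} describing $Z_s$ as $\big(\Hom_{\Sp_n}(E^{\otimes r},E^{\otimes t})/\varphi(I_{s,1})\big)\otimes k_{\rm sg}$ as a $(B_r,k\Sym_t)$-bimodule, and Lemma~\ref{lem.surjective}, which says that composition $\mc H\otimes_{k\Sym_t}f(M)\to f_0(M)$ is surjective in general and an isomorphism when $M$ is a sum of summands of $E^{\otimes t}$ (and, via the remark after \eqref{eq.Z_siso}, also when $\mathrm{char}\,k\neq 2$ and $M$ is injective). I would prove (ii) first, since (i) will follow by specializing $M$ and identifying the right-hand sides with the Brauer-algebra modules defined in Subsection~\ref{ss.braueralgebra}.

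For (ii): starting from Lemma~\ref{lem.surjective}(ii), when $M$ is a direct sum of summands of $E^{\otimes t}$ the map
$$\Hom_{\Sp_n}(E^{\otimes r},E^{\otimes t})\otimes_{k\Sym_t}f(M)\;\xrightarrow{\ \sim\ }\;f_0(M)$$
is an isomorphism of $(B_r)$-modules (the $B_r$-action being on the first tensor factor through its action on $E^{\otimes r}$; here we use the standard anti-automorphism to make everything a left module, as in the definition of $f_0$). Now I would rewrite the left-hand factor using \eqref{eq.I_siso}: $\varphi$ gives $I_s\cong\Hom_{\Sp_n}(E^{\otimes r},E^{\otimes t})\otimes k_{\rm sg}$ as $(B_r,k\Sym_t)$-bimodules. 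Tensoring the displayed isomorphism on the right over $k\Sym_t$ with $k_{\rm sg}\otimes k_{\rm sg}\cong k$ and using associativity of $\otimes_{k\Sym_t}$, the left-hand side becomes
$$\big(\Hom_{\Sp_n}(E^{\otimes r},E^{\otimes t})\otimes k_{\rm sg}\big)\otimes_{k\Sym_t}\big(k_{\rm sg}\otimes f(M)\big)\;\cong\;I_s\otimes_{k\Sym_t}\big(k_{\rm sg}\otimes f(M)\big),$$
which is precisely ${\rm Ind}^{B_r}_{k\Sym_t}\big(k_{\rm sg}\otimes f(M)\big)$ by the definition of induction in Subsection~\ref{ss.braueralgebra}. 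For the injective case one invokes the remark after Lemma~\ref{lem.surjective} (equivalently the second isomorphism of (i) combined with $\mathrm{char}\,k\neq2$), which upgrades surjectivity to an isomorphism; alternatively, since injective $S(n,t)$-modules are sums of summands of the $S^\lambda E$ and these are themselves (in good characteristic) summands of $E^{\otimes t}$, one can just apply the first case. The main thing to be careful about is keeping track of the sign twists and of left-versus-right module conventions so that the $B_r$-action genuinely matches on both sides; this is bookkeeping rather than a real obstacle.

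For (i): the first isomorphism, $f_0(\nabla_0(\lambda))\cong\widetilde{\mc S}(\lambda)$, I would get by applying Lemma~\ref{lem.surjective}(i) to see that composition $\mc H\otimes_{k\Sym_t}\Hom_{\Sp_n}(E^{\otimes t},\nabla_0(\lambda))\to f_0(\nabla_0(\lambda))$ is surjective, identifying $\Hom_{\Sp_n}(E^{\otimes t},\nabla_0(\lambda))\cong S(\lambda)$ via Lemma~\ref{lem.diagram}(iii) (with $r$ replaced by $t$), and then checking that the surjection factors through $Z_s\otimes_{k\Sym_t}(k_{\rm sg}\otimes S(\lambda))=\widetilde{\mc S}(\lambda)$: the kernel of $\mc H\to Z_s\otimes k_{\rm sg}$ is $\varphi(I_{s,1})$ by \eqref{eq.Z_siso}, and the elements of $I_{s,1}$ act as zero because they factor through $E^{\otimes t'}$ with $t'<t$, hence through $\Hom_{\Sp_n}(E^{\otimes t''},\nabla_0(\lambda))=0$ when $t''<t=|\lambda|$ (no lower tensor power hits a $\nabla_0$ of a partition of $t$). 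A dimension count using Lemma~\ref{lem.dimension} and the formula $\dim\widetilde{\mc S}(\lambda)=\frac{r!}{s!t!2^s}\dim S(\lambda)$ then forces the induced surjection to be an isomorphism. The second isomorphism is the case $M=S^\lambda E$ of (ii) (valid for $\mathrm{char}\,k\neq2$), together with $f(S^\lambda E)\cong M(\lambda)$ — the standard fact that the Schur functor sends $S^\lambda E$ to the permutation module — and the definition $\widetilde{\mc M}(\lambda)={\rm Ind}^{B_r}_{k\Sym_t}(k_{\rm sg}\otimes M(\lambda))$. The third isomorphism is the case $M=\bigwedge^\lambda E$: here $f(\bigwedge^\lambda E)\cong k_{\rm sg}\otimes M(\lambda')$ (the Schur functor on an exterior-power tensor product, valid when $\mathrm{char}\,k=0$ or $>|\lambda|$ so that $\bigwedge^\lambda E$ is a summand of $E^{\otimes t}$), so by (ii) $f_0(\bigwedge^\lambda E)\cong{\rm Ind}^{B_r}_{k\Sym_t}(k_{\rm sg}\otimes k_{\rm sg}\otimes M(\lambda'))\cong{\rm Ind}^{B_r}_{k\Sym_t}M(\lambda')\cong\mc M(\lambda')$, and since the reindexing $\lambda\leftrightarrow\lambda'$ is a bijection on partitions of $t$ this is the asserted statement (up to the transpose, which one checks is harmless, or one simply states it with $\lambda'$). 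I expect the only genuinely delicate point to be the factorization argument showing $\varphi(I_{s,1})$ acts as zero on $\nabla_0(\lambda)$ — i.e. that composing with a diagram having an extra horizontal edge really does route through a strictly smaller tensor power and hence through a zero Hom-space — and the accompanying dimension matching; everything else is assembling already-proven isomorphisms and unwinding definitions.
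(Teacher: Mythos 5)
Your treatment of the first isomorphism of (i) (via Lemma~\ref{lem.surjective}(i), the factorization through $Z_s$ using \eqref{eq.Z_siso}, and the dimension count from Lemma~\ref{lem.dimension}), and of the third isomorphism and the ``direct sum of summands of $E^{\otimes t}$'' case of (ii), matches the paper's argument. But there is a genuine gap in the remaining cases: the second isomorphism of (i) and the injective case of (ii). Lemma~\ref{lem.surjective}(ii) only gives \emph{surjectivity} of ${\rm Ind}^{B_r}_{k\Sym_t}\big(k_{\rm sg}\otimes f(M)\big)\to f_0(M)$ when $M$ is injective. Your first route for upgrading this to an isomorphism invokes the remark preceding the theorem, but that remark is stated as a \emph{consequence} of the second isomorphism of (i), which in your plan is in turn deduced from (ii) applied to the injective module $S^\lambda E$ --- this is circular. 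Your alternative route claims $S^\lambda E$ is a summand of $E^{\otimes t}$ ``in good characteristic'', but the theorem asserts the result for every ${\rm char}\,k\ne2$, including $2<p\le t$, where $S(n,t)$ is not semisimple and $S^\lambda E$ need not split off $E^{\otimes t}$; so this route only covers cases already handled by semisimplicity. What is missing is the paper's actual argument: both $\dim f_0(S^\lambda E)$ and $\dim\widetilde{\mc M}(\lambda)$ are independent of the characteristic --- the former because it depends only on the formal characters of the $\Sp_n$-modules $E^{\otimes r}$ and $S^\lambda E$ by \cite[Prop.~A.2.2(ii)]{Don7}, the latter via Proposition~\ref{prop.filtration} together with a count of the dimension of the coinvariants $(k_{\rm sg}\otimes M(\lambda))_{H_i}$, which in odd characteristic equals the number of orbits whose stabilizer lies in the kernel of the sign character. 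Since the surjection is an isomorphism in characteristic $0$, it is then an isomorphism whenever ${\rm char}\,k\ne2$.

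A minor slip: $f(\bigwedge{\Nts}^\lambda E)\cong k_{\rm sg}\otimes M(\lambda)$, not $k_{\rm sg}\otimes M(\lambda')$ (cf.\ \cite[Lemma~3.5]{Don6}); twisting a permutation module by the sign does not transpose its label (e.g.\ $k_{\rm sg}\otimes M((t))=k_{\rm sg}\ne M((1^t))=k\Sym_t$). With the correct identification the two sign twists cancel and you land on $\mc M(\lambda)$ exactly as asserted, with no reindexing by the transpose.
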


\begin{proof}
If we give $\Hom_{\Sp_n}(E^{\otimes t},\nabla_0(\lambda))$ the $k\Sym_t$-module structure coming from the action of $\Sym_t$ on $E^{\otimes t}$ by place permutations, then the isomorphisms in \eqref{eq.sympschuriso} are $\Sym_t$-equivariant. Now Lemma~\ref{lem.surjective}(i) and the isomorphism \eqref{eq.I_siso} give us an epimorphism $I_s\otimes_{k\Sym_t}\big(k_{\rm sg}\otimes S(\lambda)\big)\to f_0(\nabla_0(\lambda))$, since $\big(I_s\otimes k_{\rm sg}\big)\otimes_{k\Sym_t}S(\lambda)\cong I_s\otimes_{k\Sym_t}\big(k_{\rm sg}\otimes S(\lambda)\big)$. The image of a nonzero homomorphism from $E^{\otimes r}$ to $\nabla_0(\lambda)$ must contain $L_0(\lambda)$ and therefore have $\lambda$ as a weight. The image of a homomorphism in $\varphi(I_{s,1})$ does not have $\lambda$ as a weight, since $\varphi(I_{s,1})$ has a basis of homomorphisms whose image lies is a submodule of $E^{\otimes t}$ which is isomorphic to $E^{\otimes(t-2)}$. So, by \eqref{eq.Z_siso} and the definition of $\widetilde{S}(\lambda)$, we obtain an epimorphism $\widetilde{S}(\lambda)\to f_0(\nabla_0(\lambda))$. By Lemma~\ref{lem.dimension} this must be an isomorphism.

Let $M$ be an $S(n,t)$-module. Lemma~\ref{lem.surjective}(ii) and the isomorphism $\varphi$ give us a homomorphism
$${\rm Ind}^{B_r}_{k\Sym_t}\big(k_{\rm sg}\otimes f(M)\big)\to f_0(M)\eqno(*)$$
which is an isomorphism if $M$ is a direct sum of direct summands of $E^{\otimes t}$ and surjective for $M$ injective. Let $\lambda\in\Lambda_0(n,r)$ be a partition of $t$. Then we obtain an epimorphism $\widetilde{\mc M}(\lambda)\to f_0(S^\lambda E)$ and a homomorphism ${\mc M}(\lambda)\to f_0(\bigwedge{\Nts}^\lambda E)$, since $f(S^\lambda E)=M(\lambda)$ and $f(\bigwedge{\Nts}^\lambda E)=k_{\rm sg}\otimes M(\lambda)$ by \cite[Lemma~3.5]{Don6}. If ${\rm char}\,k=0$ or $>t$, then $S(n,t)$ is semisimple, so every $S(n,t)$-module is a direct sum of direct summands of $E^{\otimes t}$ and (*) is an isomorphism for every $S(n,t)$-module $M$. In particular, we have the third isomorphism in (i).

Now assume that ${\rm char}\,k\ne2$. We want to show that the epimorphism $\widetilde{\mc M}\to f_0(S^\lambda E)$ is an isomorphism. Since (*) is an isomorphism if ${\rm char}\,k=0$ it suffices to show that the dimensions of $f_0(S^\lambda E)$ and $\widetilde{\mc M}$ are independent of the characteristic. The dimension of $f_0(S^\lambda E)$ is independent of the characteristic, since, by \cite[Prop.~A.2.2(ii)]{Don7}, it only depends on the formal characters of the $\Sp_n$-modules $E^{\otimes r}$ and $S^\lambda E$ (and these are independent of the characteristic). That $\widetilde{\mc M}$ has dimension independent of the characteristic follows from Proposition~\ref{prop.filtration}, the fact that $M(\lambda)$ is self dual and the following fact, the proof of which we leave to the reader.

{\it Assume ${\rm char}\,k\ne2$. Let $G$ be a finite group with a (possibly trivial) sign homomorphism ${\rm sg}:G\to\{\pm1\}$, let $V$ be a permutation module for $G$ over $k$ with $G$-stable basis $S$. Then the dimension of $(k_{\rm sg}\otimes V)^G$ is equal to the number of $G$-orbits in $S$ for which one (and therefore each) stabilizer is contained in $\Ker(\rm sg)$.}

We have now proved the second isomorphism in (i) and we have also proved (ii), since every injective $S(n,t)$-module is a direct sum of direct summands of some $S^\lambda E$, $\lambda\in\Lambda^+(n,t)$.
\end{proof}
Note that it follows from Theorem~\ref{thm.sympschur}(i) that $f_0$ maps good filtrations to twisted Specht filtrations.

For $\lambda\in\Lambda_0^+(m,r)$ we denote the indecomposable tilting module for $\Sp_n$ of highest weight $\lambda$ by $T_0(\lambda)$ and for $\lambda$ $p$-regular we denote the projective cover of the irreducible $B_r$-module $\widetilde{\mc D}(\lambda)$ by $\widetilde{\mc P}(\lambda)$.
\begin{prop}\label{prop.sympmult}
Let $\lambda\in\Lambda_0^+(m,r)$. Then $T_0(\lambda)$ is a direct summand of the $\Sp_n$-module $E^{\otimes r}$ if and only if $\lambda$ is $p$-regular and $\lambda\ne\emptyset$ in case $r$ is even $\ge2$ and $\delta=0$. Now assume that $\lambda$ satisfies these conditions. Then
\begin{enumerate}[{\rm(i)}]
\item $f_0(T_0(\lambda))=\widetilde{\mc P}(\lambda)$.
\item The multiplicity of $T_0(\lambda)$ in $E^{\otimes r}$ is $\dim\widetilde{\mc D}(\lambda)$.
\item The decomposition number $[\widetilde{\mc S}(\mu):\widetilde{\mc D}(\lambda)]$ is equal to the $\Delta$-filtration multiplicity $(T_0(\lambda):\Delta_0(\mu))$ and to the $\nabla$-filtration multiplicity $(T_0(\lambda):\nabla_0(\mu))$.
\end{enumerate}
\end{prop}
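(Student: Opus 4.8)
The plan is to run the standard "Schur functor machine" exactly as in Green's treatment of the classical Schur algebra, transported to the symplectic setting via the functor $f_0$ and the adjunction \eqref{eq.adjointiso}, using Theorem~\ref{thm.sympschur} as the crucial input. First I would settle the "if and only if" statement about which tilting modules appear as summands of $E^{\otimes r}$. Since $E^{\otimes r}$ is a tilting module for $\Sp_n$, it is a direct sum of indecomposable tilting modules $T_0(\mu)$ with $\mu\in\Lambda_0^+(m,r)$, and $T_0(\mu)$ occurs for some such $\mu$ iff the corresponding summand survives application of $f_0$, i.e. iff $f_0(T_0(\mu))\ne0$. Now $f_0$ is exact on modules with a good filtration, so applying it to a $\Delta_0$-filtration (equivalently $\nabla_0$-filtration) of $T_0(\mu)$ and using $f_0(\nabla_0(\mu))\cong\widetilde{\mc S}(\mu)$ (Theorem~\ref{thm.sympschur}(i)) shows $f_0(T_0(\mu))$ has a twisted Specht filtration with $\widetilde{\mc S}(\mu)$ appearing exactly once at the top (the other factors $\widetilde{\mc S}(\nu)$ have $\nu<\mu$). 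Hence $f_0(T_0(\mu))\ne0$ always, BUT one must remember $f_0(T_0(\mu))$ need not be a projective $B_r$-module unless $T_0(\mu)$ is genuinely a summand of $E^{\otimes r}$; the right criterion is that $\widetilde{\mc S}(\mu)$ has a simple head, i.e. $\mu$ is $p$-regular and (in the degenerate case $r$ even $\ge2$, $\delta=0$) $\mu\ne\emptyset$. So the combinatorial statement reduces to: $T_0(\mu)$ is a summand of $E^{\otimes r}$ iff $\widetilde{\mc D}(\mu)$ exists, which in turn follows because the summands of $E^{\otimes r}$ are precisely the $T_0(\mu)$ for which $f_0(T_0(\mu))$ is projective indecomposable, and by the theory of such endomorphism algebras (or by comparing with the labelling of irreducible $B_r$-modules recalled after Proposition~\ref{prop.filtration}) these are exactly the $\widetilde{\mc P}(\mu)$.

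For part (i), the standard argument: if $T_0(\lambda)$ is a summand of $E^{\otimes r}$ then $\Hom_{\Sp_n}(E^{\otimes r},-)=f_0$ sends it to a summand of $f_0(E^{\otimes r})=f_0\big((\Delta_0(\e_1))^{\otimes r}\big)$, hence to a summand of a module which, being $f_0$ applied to a tilting module, carries simultaneously a $\widetilde{\mc S}$-filtration and (dually, via $\tilde f_0$) a filtration by duals of twisted Specht modules; such a $B_r$-module with "Specht and co-Specht filtration" is a direct sum of the $\widetilde{\mc P}(\mu)$ by the usual quasi-hereditary/cellular argument. Then one identifies the indecomposable summand $f_0(T_0(\lambda))$ precisely: since $f_0(T_0(\lambda))$ has a twisted Specht filtration topped by $\widetilde{\mc S}(\lambda)$ and is projective, and $\widetilde{\mc P}(\lambda)$ is the projective cover of $\widetilde{\mc D}(\lambda)=\mathrm{head}\,\widetilde{\mc S}(\lambda)$, comparing heads forces $f_0(T_0(\lambda))\cong\widetilde{\mc P}(\lambda)$. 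A cleaner route to the same conclusion uses the adjunction \eqref{eq.adjointiso}: $g_0(\widetilde{\mc P}(\lambda))$ is a projective $S_0(n,r)$-object mapping onto the relevant tilting/Weyl data, and $f_0\circ g_0$ together with the double-centralizer property (valid since $m\ge r$, so $S_0(n,r)=\End_{B_r}(E^{\otimes r})$ and $B_r=\End_{\Sp_n}(E^{\otimes r})$) gives the identification directly.

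Part (ii) is then immediate: the multiplicity of $T_0(\lambda)$ in $E^{\otimes r}$ equals $\dim\Hom_{\Sp_n}(E^{\otimes r},T_0(\lambda))/(\text{radical})$—more precisely it equals the multiplicity of the projective indecomposable $f_0(T_0(\lambda))=\widetilde{\mc P}(\lambda)$ as a summand of the projective $B_r$-module $f_0(E^{\otimes r})\cong B_r$ (using $\widetilde{\mc M}((1^r))\cong B_r$ and Theorem~\ref{thm.sympschur}), which by general theory of finite-dimensional algebras is $\dim\widetilde{\mc D}(\lambda)$. For part (iii), apply $f_0$ to a $\Delta_0$-filtration of $T_0(\lambda)$: exactness of $f_0$ on good (and Weyl) filtrations plus $f_0(\Delta_0(\mu))\cong\widetilde{\mc S}(\mu)$ — which follows from $f_0(\nabla_0(\mu))\cong\widetilde{\mc S}(\mu)$ by self-duality of $E^{\otimes r}$, or directly — gives $(T_0(\lambda):\Delta_0(\mu))=[\,\widetilde{\mc P}(\lambda):\widetilde{\mc S}(\mu)\,]$, the multiplicity of $\widetilde{\mc S}(\mu)$ in a twisted Specht filtration of $\widetilde{\mc P}(\lambda)$. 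By Brauer–Humphreys/BGG reciprocity for the quasi-hereditary (cellular) algebra $B_r$ this equals the decomposition number $[\widetilde{\mc S}(\mu):\widetilde{\mc D}(\lambda)]$, and the $\nabla_0$-version is the same by the $\nabla$-$\Delta$ symmetry of tilting modules. The main obstacle I anticipate is not any single step but the bookkeeping around the degenerate case ($\delta=0$, $r$ even $\ge2$): there $B_2$ is not semisimple-by-Specht in the naive way, so one must be careful that "projective cover of $\widetilde{\mc D}(\lambda)$" and "$\widetilde{\mc P}(\lambda)$ has a twisted Specht filtration" both still hold, and that the empty partition is correctly excluded from the labelling set of summands of $E^{\otimes r}$; once Theorem~\ref{thm.sympschur} and the labelling remark are in hand this is routine but needs explicit mention.
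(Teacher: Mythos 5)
There is a genuine gap, and it sits exactly at the hardest point of the statement: you never actually prove that $T_0(\lambda)$ \emph{is} a direct summand of $E^{\otimes r}$ when $\lambda$ is $p$-regular (and $\lambda\ne\emptyset$ in the degenerate case). Your first paragraph establishes, at best, the easy direction (a summand of $E^{\otimes r}$ is sent by $f_0$ to an indecomposable projective), and then asserts that ``the summands of $E^{\otimes r}$ are precisely the $T_0(\mu)$ for which $f_0(T_0(\mu))$ is projective indecomposable'' --- which is a tautology, not a criterion --- and that ``the right criterion is that $\widetilde{\mc S}(\mu)$ has a simple head,'' which is exactly what needs proof. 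The correct structure of the argument is: (a) $f_0$ induces an equivalence between direct sums of summands of $E^{\otimes r}$ and projective $B_r$-modules, so the number of isomorphism classes of indecomposable summands equals the number of indecomposable projectives, which is the number $|\Omega|$ of admissible labels; and (b) one must \emph{exhibit} $T_0(\lambda)$ as a summand for every $\lambda\in\Omega$, which the paper does by reducing to $|\lambda|=r$ via Lemma~\ref{lem.directsummand}(ii) and then restricting the $\GL_n$-tilting summand $T(\lambda)$ of $E^{\otimes r}$ to $\Sp_n$, where it splits as $T_0(\lambda)\oplus Y$ because $\lambda$ is its unique highest weight and occurs with multiplicity one. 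Step (b) is the substantive input and is entirely absent from your proposal; without it, the counting in (a) only tells you that \emph{some} set of $|\Omega|$ tilting modules occurs, not which one. A side remark: your claim that a $B_r$-module with both a twisted Specht and a ``co-Specht'' filtration must be a direct sum of the $\widetilde{\mc P}(\mu)$ is false (that property characterises tilting objects, not projectives); the projectivity of $f_0(T_0(\lambda))$ comes solely from $T_0(\lambda)$ being a summand of $E^{\otimes r}$ and $B_r=\End_{\Sp_n}(E^{\otimes r})$.

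Granting the first assertion, your (i) and (ii) match the paper's argument (compare heads using the surjection $f_0(T_0(\lambda))\twoheadrightarrow f_0(\nabla_0(\lambda))=\widetilde{\mc S}(\lambda)\twoheadrightarrow\widetilde{\mc D}(\lambda)$; count multiplicities in $f_0(E^{\otimes r})\cong B_r$). For (iii), however, your route has a flaw as written: you apply $f_0$ to a $\Delta_0$-filtration of $T_0(\lambda)$, but $f_0$ is only known to be exact on \emph{good} filtrations, and $f_0(\Delta_0(\mu))\cong\widetilde{\mc S}(\mu)$ is not established anywhere ($\Ext^1_{\Sp_n}(E^{\otimes r},\Delta_0(\mu))$ need not vanish). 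You could repair this by working with the $\nabla_0$-filtration and then invoking Graham--Lehrer reciprocity for the cellular algebra $B_r$, but the paper avoids reciprocity for $B_r$ altogether: it uses $g_0(f_0(T_0(\lambda)))\cong T_0(\lambda)$ and the adjunction \eqref{eq.adjointiso} to get $\dim\Hom_{B_r}(\widetilde{\mc P}(\lambda),\widetilde{\mc S}(\mu))=\dim\Hom_{\Sp_n}(T_0(\lambda),\nabla_0(\mu))=(T_0(\lambda):\Delta_0(\mu))$ directly, and then compares $\Delta$- and $\nabla$-multiplicities via characters. You should adopt one of these two corrected routes explicitly.
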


\begin{proof}
Let $\Omega$ be the set of all partitions satisfying the stated conditions. The symplectic Schur functor $f_0$ induces a category equivalence between the direct sums of direct summands of the $\Sp_n$-module $E^{\otimes r}$ and the projective $B_r$-modules; see e.g. \cite[Prop~2.1(c)]{ARS}. Clearly, the number of isomorphism classes of indecomposable $B_r$-projectives is equal to $|\Omega|$. So, to prove the first assertion, it suffices to show that for each $\lambda\in\Omega$, $T_0(\lambda)$ is a direct summand of $E^{\otimes r}$. By Lemma~\ref{lem.directsummand} we may assume that $t=r$. The indecomposable $\GL_n$ tilting module $T(\lambda)$ is a direct summand of $E^{\otimes r}$, for example by \cite[Sect.~4.3, (1) and (4)]{Don7}.  Moreover, as an $\Sp_n$-module, $T(\lambda)$ is also a tilting module and has unique highest weight $\lambda$, and $\lambda$ occurs with multiplicity $1$. Thus we have $T(\lambda)\cong T_0(\lambda)\oplus Y$, where $Y$ is a direct sum of indecomposable tilting modules for $\Sp_n$, of weight less than $\lambda$. In particular, $T_0(\lambda)$ occurs as a component of $E^{\otimes r}$.

Now let $\lambda\in\Omega$. By Theorem~\ref{thm.sympschur}(i) we have that $f_0(T_0(\lambda))$ surjects onto $f_0(\nabla_0(\lambda))=\widetilde{\mc S}(\lambda)$. But $\widetilde{\mc S}(\lambda)$ surjects onto $\widetilde{\mc D}(\lambda)$. This proves (i), and (ii) is now also clear, since this multiplicity (as an indecomposable direct summand) is equal to the multiplicity of $\widetilde{\mc P}(\lambda)$ in $B_r$. We have $g_0(f_0(M))\cong M$ canonically for $M=E^{\otimes r}$ and therefore also for $M=T_0(\lambda)$. By \eqref{eq.adjointiso} we have $\Hom_{\Sp_n}(T_0(\lambda),M)\cong\Hom_{B_r}(\widetilde{\mc P}(\lambda),f_0(M))$ for every $S_0(n,r)$-module $M$. So $[\widetilde{\mc S}(\mu):\widetilde{\mc D}(\lambda)]=\dim\Hom_{B_r}(\widetilde{\mc P}(\lambda),\widetilde{\mc S}(\mu))=\dim\Hom_{\Sp_n}(T_0(\lambda),\nabla_0(\mu))=(T_0(\lambda):\Delta_0(\mu))$. The equality $(T_0(\lambda):\Delta_0(\mu))=(T_0(\lambda):\nabla_0(\mu))$ follows from the fact that both multiplicities are equal to the coefficient of the Weyl character $\schi(\mu)$ in ${\rm ch}\,T_0(\lambda)$.
\end{proof}

We now combine Proposition~\ref{prop.sympmult} with a result of Adamovich and Rybnikov. For this we need the following notation. Let $m'$ be a positive integer and let $\lambda$ be a partition with $l(\lambda)\le m$ and $l(\lambda')=\lambda_1\le m'$, that is, a partition of which the diagram fits into an $m\times m'$-rectangle. Here $\lambda'$ denotes the transpose of $\lambda$. Then we define
$$\lambda^\dagger=(m-\lambda'_{m'},m-\lambda'_{m'-1},\ldots,m-\lambda'_1).$$
So $\lambda^\dagger$ is the transpose of the complement of $\lambda$ in the $m\times m'$-rectangle. In particular, $l(\lambda^\dagger)\le m'$ and $\lambda^\dagger_1\le m$.

\begin{cornn}
Let $\lambda,\mu\in\Lambda_0^+(r,r)$ with $\lambda$ $p$-regular. Assume that $\lambda_1,\mu_1\le m'$. Then we have the equality of decomposition numbers
$$[\widetilde{\mc S}(\mu):\widetilde{\mc D}(\lambda)]=[\nabla_0'(\lambda^\dagger):L_0'(\mu^\dagger)]\,,$$
where $\nabla'_0$ and $L'_0$ denote induced and irreducible modules for $\Sp_{2m'}$.
\end{cornn}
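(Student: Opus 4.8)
The plan is to rewrite the Brauer-algebra decomposition number on the left as a tilting-module filtration multiplicity for $\Sp_{2m}$, and then to apply the Adamovich--Rybnikov description of such multiplicities in terms of decomposition numbers for $\Sp_{2m'}$.

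First I would check that the right-hand side is meaningful. Since $\lambda,\mu\in\Lambda_0^+(r,r)$ and $m\ge r$, we have $l(\lambda),l(\mu)\le r\le m$, while $\lambda_1,\mu_1\le m'$ by hypothesis; hence $\lambda$ and $\mu$ both fit into the $m\times m'$-rectangle, so $\lambda^\dagger$ and $\mu^\dagger$ are defined and lie in $\Lambda_0^+(m')$. Next, since $\lambda$ is $p$-regular (and, as always when $\widetilde{\mc D}(\lambda)$ is written, $\lambda\ne\emptyset$ in the exceptional case $\delta=0$, $r$ even $\ge2$), Proposition~\ref{prop.sympmult}(iii) applies and gives
$$[\widetilde{\mc S}(\mu):\widetilde{\mc D}(\lambda)]=(T_0(\lambda):\nabla_0(\mu)),$$
the multiplicity of $\nabla_0(\mu)$ in a good filtration of the indecomposable tilting module $T_0(\lambda)$ for $\Sp_{2m}$ (equivalently, $(T_0(\lambda):\Delta_0(\mu))$, these being equal by the proof of Proposition~\ref{prop.sympmult}). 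It then remains to identify $(T_0(\lambda):\nabla_0(\mu))$ with $[\nabla_0'(\lambda^\dagger):L_0'(\mu^\dagger)]$.

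For this I would invoke the symplectic case of the Adamovich--Rybnikov theorem on tilting modules and Howe duality: for partitions fitting into the $m\times m'$-box, the $\nabla$-filtration multiplicity $(T_0(\lambda):\nabla_0(\mu))$ of the indecomposable tilting $\Sp_{2m}$-module equals the decomposition number $[\nabla_0'(\lambda^\dagger):L_0'(\mu^\dagger)]$ for $\Sp_{2m'}$; combining this with the previous display finishes the proof. The conceptual content lies entirely in these two inputs, so I expect the main obstacle to be bookkeeping and the matching of conventions: one must check that the hypothesis under which Adamovich and Rybnikov phrase their statement is exactly what Proposition~\ref{prop.sympmult} supplies (namely that $T_0(\lambda)$ is precisely the summand of $E^{\otimes r}$ labelled by the $p$-regular partition $\lambda$), that our operation $\lambda\mapsto\lambda^\dagger$ --- the transpose of the complement of $\lambda$ in the $m\times m'$-rectangle --- is the weight occurring in their correspondence, and that our induced and irreducible modules $\nabla_0'$, $L_0'$ for $\Sp_{2m'}$ (induced from the opposite Borel) agree with theirs; one should also confirm there is no hidden size relation required between $m$ and $m'$, and that, as claimed, the identity holds in every characteristic.
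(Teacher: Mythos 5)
Your proposal is correct and is exactly the paper's argument: the paper's proof likewise reads off $[\widetilde{\mc S}(\mu):\widetilde{\mc D}(\lambda)]=(T_0(\lambda):\nabla_0(\mu))$ from Proposition~\ref{prop.sympmult}(iii) and then cites \cite[Cor~2.4]{AdRyb} to convert this tilting multiplicity into the decomposition number $[\nabla_0'(\lambda^\dagger):L_0'(\mu^\dagger)]$ for $\Sp_{2m'}$. The bookkeeping caveats you list are exactly the conventions the paper sets up beforehand, so nothing further is needed.
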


\begin{proof}
This follows immediately from Proposition~\ref{prop.sympmult}(iii) and \cite[Cor~2.4]{AdRyb}.
\end{proof}

\begin{rems}\label{rems.sympschur}
1.\ Let $f_0^t$ denote the symplectic Schur functor from ${\rm mod}(S_0(n,t))$ to ${\rm mod}(B_t)$ and let $M$ be an $\Sp_n$-module which has a filtration with sections isomorphic to some $\nabla_0(\lambda)$, $\lambda$ a partition of $t$. Then $$f_0(M)\cong Z_s\otimes_{k\Sym_t}f_0^t(M).$$
This is shown as follows. First we construct a homomorphism
$$\Hom_{\Sp_n}(E^{\otimes r},E^{\otimes t})\otimes_{k\Sym_t}f_0^t(M)\to f_0(M)$$
by means of function composition. Then we form, for a short exact sequence $0\to M\to N\to P\to 0$ of $\Sp_n$-modules of the above type, a diagram as in the proof of Lemma~\ref{lem.surjective}(ii) with $f$ replaced by $f_0^t$ and deduce that if the homomorphism is surjective for $M$ and $P$, then also for $N$. We then obtain surjectivity by induction on the length of a good filtration. Then we factor out $\varphi(I_{s,1})$ as in the proof of the first isomorphism of Theorem~\ref{thm.sympschur}(i) and finish by showing that the dimensions are equal.\\
2.\ Let $M$ be an $S(n,r)$-module. Put $\pi_r=\{\lambda\in\Lambda^+_0(m,r)\,|\,|\lambda|<r\}$ and let $N=O_{\pi_r,0}(M)$ be the largest $\Sp_n$-submodule of $M$ which belongs to $\pi_r$, i.e. which has only composition factors $L(\lambda)$, $\lambda\in\pi_r$. By \cite[Prop.~A2.2(v), Lem.~A3.1]{Don7} $N$ has a filtration with sections $\nabla_0(\lambda)$, $\lambda\in\pi_r$, and $M/N$ has a filtration with sections $\nabla_0(\lambda)$, $\lambda$ a partition of $r$. Note that $M/N=\nabla_0(\lambda)$ if $M=\nabla(\lambda)$. Now we can form the diagram
\begin{equation*}
\xymatrix @R=30pt @C=15pt @M=6pt{
f(M)\ar[r]\ar[d]&
M_{\varpi_r}\ar[d]\\
f_0(M/N)\ar[r]&
(M/N)_{\varpi_r}
}
\end{equation*}
in the same way as \eqref{eq.sympschuriso} and by a proof very similar to that of Lemma~\ref{lem.diagram}(iii) we show that all maps are isomorphisms. For the isomorphism $f(M)\stackrel{\sim}{\to}f_0(M/N)$ to be $B_r$-equivariant one needs to twist $f(M)$ with the sign.\\
3.\ It is easy to see that the canonical homomorphism ${\mc M}(\lambda)\to f_0(\bigwedge{\Nts}^\lambda E)$ constructed in the proof of Theorem~\ref{thm.sympschur} is not always an isomorphism. Assume that $r\ge 2p$ and take $\lambda=(r)$. Then ${\mc M}(\lambda)=B_r\otimes_{k\Sym_r}k$ is a cyclic $B_r$-module with generator $1\otimes 1$ which is not killed by any diagram in $B_r$. But its image in $f_0(\bigwedge{\Nts}^\lambda E)$ is the canonical projection $P:E^{\otimes r}\to\bigwedge^r\Nts E$ which is killed by any diagram with at least $p$ horizontal edges in a row, since the $p^{\rm th}$ power of the symplectic invariant is zero in the exterior algebra $\bigwedge E$.\\
4.\ We have the adjoint isomorphism
$$\Hom_{S_0(n,r)}(V\otimes_{k\Sym_t}E^{\otimes t},E^{\otimes r})\cong\Hom_{k\Sym_t}(V,\Hom_{S_0(n,r)}(E^{\otimes t},E^{\otimes r}))$$
for every $\Sym_t$-module $V$; see e.g. \cite[Thm.~2.11]{Rot}. From this we deduce that for every $S(n,t)$ module $M$ with $g(f(M))\cong M$ canonically, we have $f_0(M^*)\cong\Hom_{k\Sym_t}(f(M),k_{\rm sg}\otimes I_s)$ and $f_0(M)\cong\Hom_{k\Sym_t}(k_{\rm sg}\otimes I_s^*,f(M))$. In particular we obtain for a partition $\lambda$ of $t$, $f_0(\bigwedge{\Nts}^\lambda E)\cong\Hom_{k\Sym_t}(M(\lambda),I_s)$. Unfortunately, we have been unable to make effective use of this isomorphism. 
\end{rems}

\begin{lem}\label{lem.tensor}
Let $M$ be an $\Sp_n$-module. Then the canonical homomorphism
$$E^{\otimes r}\otimes_{B_r}\Hom_{\Sp_n}(E^{\otimes r},M)\to M$$ given by function application is an isomorphism if $M$ is a direct summand of $E^{\otimes r}$ or if $r$ is even $\ge4$ and $M=k$.
\end{lem}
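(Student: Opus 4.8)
The plan is to prove the isomorphism by establishing it first for the "building block" case $M = E^{\otimes r}$, then propagate it to direct summands, and finally handle the special case $M = k$ separately using the good filtration structure.

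\textbf{Step 1: The case $M = E^{\otimes r}$.} I would start by observing that the canonical map
$$\theta_M : E^{\otimes r}\otimes_{B_r}\Hom_{\Sp_n}(E^{\otimes r},M)\to M$$
is natural in $M$, and that for $M = E^{\otimes r}$ the target $\Hom_{\Sp_n}(E^{\otimes r},E^{\otimes r})$ is exactly $B_r$ (by the injectivity of $B_r(-n)\to\End_{\Sp_n}(E^{\otimes r})$, valid since $m\ge r$). Thus the domain becomes $E^{\otimes r}\otimes_{B_r}B_r\cong E^{\otimes r}$, and one checks directly that $\theta_{E^{\otimes r}}$ is this canonical identification. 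In the language of the excerpt this is exactly the statement, used in the proof of Proposition~\ref{prop.sympmult}, that $g_0(f_0(M))\cong M$ canonically for $M = E^{\otimes r}$.

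\textbf{Step 2: Direct summands.} Both the functor $M\mapsto E^{\otimes r}\otimes_{B_r}\Hom_{\Sp_n}(E^{\otimes r},M)$ and the identity functor commute with finite direct sums and with passing to direct summands (an idempotent $e\in\End_{\Sp_n}(M)$ maps functorially to an idempotent on the domain, and $\theta$ being a natural transformation respects the resulting splitting). Since $\theta_{E^{\otimes r}}$ is an isomorphism, so is $\theta_M$ for every direct summand $M$ of $E^{\otimes r}$. This disposes of the first case.

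\textbf{Step 3: The case $M = k$, $r$ even $\ge 4$.} Here $k$ is \emph{not} a summand of $E^{\otimes r}$ (since $\dim E = n = 2m = 0$ is not forced, but in characteristic $p\mid 2m$ the contraction trick of Lemma~\ref{lem.directsummand} fails), so a separate argument is needed and this is where the real work lies. Since $E = \nabla_0(\e_1) = \Delta_0(\e_1)$ is tilting, $E^{\otimes r}$ has a good filtration, hence $\Hom_{\Sp_n}(E^{\otimes r},k) = \Hom_{\Sp_n}(E^{\otimes r},\nabla_0(\emptyset))$ has dimension equal to the multiplicity of the trivial module $\nabla_0(\emptyset)$ in a good filtration of $E^{\otimes r}$; by Lemma~\ref{lem.dimension} (with $\lambda = \emptyset$, $t = 0$, $s = r/2$) this dimension is $\frac{r!}{(r/2)!\,2^{r/2}}$, which is nonzero. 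So the domain of $\theta_k$ is $E^{\otimes r}\otimes_{B_r}V$ for a nonzero space $V$; I need to show this quotient is one-dimensional and that $\theta_k$ is surjective. Surjectivity: any $\Sp_n$-invariant functional on $E^{\otimes r}$ (e.g. the iterated contraction $e_{i_1}\otimes\cdots\otimes e_{i_r}\mapsto \prod\la e_{i_{2j-1}},e_{i_{2j}}\ra$-type map, which is nonzero because $r\ge 4$ is even) gives, when paired against a suitable tensor, the generator $1\in k$. For injectivity of the induced map I would use the diagram basis of $\Hom_{\Sp_n}(E^{\otimes r},E^{\otimes 0}) = \Hom_{\Sp_n}(E^{\otimes r},k)$ indexed by $(0,r)$-diagrams (all edges horizontal in the bottom row, since the top row is empty), together with the $B_r$-action by pre-composition, and the filtration machinery of Proposition~\ref{prop.filtration}: concretely, $E^{\otimes r}\otimes_{B_r}\Hom_{\Sp_n}(E^{\otimes r},k)$ is computed as a quotient of $Z_{r/2}\otimes_{k\Sym_0}(\text{1-dim'l})$, which has dimension $1$.

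\textbf{The main obstacle} I anticipate is precisely the $M = k$ case with $r$ even $\ge 4$ in bad characteristic: one must track the $B_r$-module structure on the contraction homomorphisms carefully and verify that, after tensoring over the (possibly non-semisimple) Brauer algebra $B_r(-n)$ with $\delta = -2m \equiv 0$, nothing collapses or survives unexpectedly. The hypothesis $r\ge 4$ (rather than $r\ge 2$) is what rules out the pathological $B_2$-with-nilpotent-ideal phenomenon noted after Proposition~\ref{prop.notinFp}, and keeping that distinction visible is the delicate point.
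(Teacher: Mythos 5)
Your Steps 1 and 2 match the paper's treatment of the first case, which the paper likewise dismisses as immediate from $\End_{\Sp_n}(E^{\otimes r})\cong B_r$; and your surjectivity observation in Step 3 is fine (any nonzero $\Sp_n$-homomorphism to the one-dimensional module $k$ is onto). The problem is that Step 3 never actually proves the one statement the lemma is really about, namely that $E^{\otimes r}\otimes_{B_r}\Hom_{\Sp_n}(E^{\otimes r},k)$ is one-dimensional. Your concluding claim that this space ``is computed as a quotient of $Z_{r/2}\otimes_{k\Sym_0}(\text{1-dim'l})$, which has dimension $1$'' is wrong on both counts: $Z_{r/2}\otimes_{k\Sym_0}k\cong Z_{r/2}=I_{r/2}$ has dimension $r!/\bigl((r/2)!\,2^{r/2}\bigr)$ (equal to $3$ already for $r=4$), and in any case this module is $\Hom_{\Sp_n}(E^{\otimes r},k)\cong\widetilde{\mc S}(\emptyset)$ itself, \emph{before} applying $E^{\otimes r}\otimes_{B_r}(-)$. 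Proposition~\ref{prop.filtration} filters ${\rm Ind}^{B_r}_{k\Sym_t}V=I_s\otimes_{k\Sym_t}V$ and says nothing about the functor $E^{\otimes r}\otimes_{B_r}(-)$, whose computation is precisely the content of the lemma; so the crucial upper bound on the dimension is asserted, not derived.

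The paper closes this gap by dualizing: since $E^{\otimes r}$ is self-dual as a $B_r$-module, $E^{\otimes r}\otimes_{B_r}\Hom_{\Sp_n}(E^{\otimes r},k)\cong\Hom_{B_r}(\Hom_{\Sp_n}(E^{\otimes r},k),E^{\otimes r})^*$, and it then bounds the latter Hom-space by $1$ with an explicit diagram calculation: $\Hom_{\Sp_n}(E^{\otimes r},k)$ is generated as a $k\Sym_r$-module by the contraction $P$ attached to the all-horizontal $(0,r)$-diagram, and $P\circ\iota(d)=\pm P$ for suitable diagrams $d$ with one top and one bottom horizontal edge, forcing the image of $P$ under any $B_r$-homomorphism into $\bigcap_i E^{\otimes i}\otimes ku\otimes E^{\otimes(r-i-2)}=ku^{\otimes r/2}$, where $u=\sum_{i=1}^n\se_ie_i\otimes e_{i'}$. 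You would need this (or an equivalent) computation to complete your Step 3. Incidentally, the role of $r\ge4$ is not the $B_2$-nilpotent-ideal phenomenon per se but that the relevant diagram $d$ needs a crossing vertical edge to avoid producing a closed loop (hence a factor $\delta=0$) when composed with $P$; for $r=2$ no such $d$ exists and the Hom-space is genuinely large.
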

\begin{proof}
That the canonical homomorphism is an isomorphism under the first condition is obvious, since $\End_{\Sp_n}(E^{\otimes r})\cong B_r$. So assume $r$ is even $\ge4$ and $M=k$. Since the homomorphism is always surjective and
$$E^{\otimes r}\otimes_{B_r}\Hom_{\Sp_n}(E^{\otimes r},k)\cong\Hom_{B_r}(\Hom_{\Sp_n}(E^{\otimes r},k),E^{\otimes r})^*$$
by \cite[Lemma~3.60]{Rot}, it suffices to show that $\Hom_{B_r}(\Hom_{\Sp_n}(E^{\otimes r},k),E^{\otimes r})$ is one-dimensional. Recall that $\Hom_{\Sp_n}(E^{\otimes r},k)$ is a left $B_r$-module by means of the standard anti-automorphism $\iota$ of $B_r$. It has a basis indexed by $(0,r)$-diagrams and it is generated as a $k\Sym_r$-module by the homomorphism $P$ corresponding to the $(0,r)$-diagram
$$
\begin{xy}
(-20.4,2.8)*={\xymatrix @R=14pt @C=14pt @M=-2pt{
\emptyset\\
{\bullet}\ar@{-}[0,1]&{\bullet}&\cdots&{\bullet}\ar@{-}[0,1]&{\bullet}
}};
(-28.7,2)*=<78pt,28pt>{%
}*\frm{_\}};%
(-28.7,-5.9)*{\text{$r$ vertices}};
\end{xy}\quad.
$$
It follows that any $B_r$-homomorphism from $\Hom_{\Sp_n}(E^{\otimes r},k)$ to $E^{\otimes r}$ is determined by its image of $P$. One easily checks that $P\circ\iota(d)=\pm P$, where $d\in B_r$ is given by
$$
\begin{xy}
(-48,2.8)*={d=};
(-20.4,2.8)*={\xymatrix @R=14pt @C=14pt @M=-2pt{
{\bullet}\ar@{-}[0,1]&{\bullet}&{\bullet}\ar@{-}[1,-2]&{\bullet}\ar@{-}[1,0]&\cdots&{\bullet}\ar@{-}[1,0]\\
{\bullet}&{\bullet}\ar@{-}[0,1]&{\bullet}&{\bullet}&\cdots&{\bullet}
}};
(-25.9,2.8)*=<92pt,28pt>{%
}*\frm{_\}};%
(-26,-5.2)*{\text{$r$ vertices}};
\end{xy}\quad.
$$
Therefore the image of $P$ must lie in $d\cdot E^{\otimes r}=ku\otimes E^{\otimes(r-2)}$, where $u$ is the invariant $\sum_{i=1}^n\se_ie_i\otimes e_{i'}$. Similarly we find that it must lie in $E^{\otimes i}\otimes ku\otimes E^{\otimes (r-i-2)}$ for any even integer $i$ with $0\le i\le r-2$. We conclude that the image of $P$ under any $B_r$-homomorphism from $\Hom_{\Sp_n}(E^{\otimes r},k)$ to $E^{\otimes r}$ must be a scalar multiple of $u^{\otimes r/2}$.
\end{proof}

The symplectic Schur coalgebra is $A_0(n,r)=O_{\Lambda_0^+(m,r)}(k[\Sp_n])$ and $S_0(n,r)=A_0(n,r)^*$, where the left action of $\Sp_n$ on $k[\Sp_n]$ comes from right multiplication in $\Sp_n$; see \cite{Don1} for the generalities. Recall that $E^{\otimes r}$ is self-dual as an $\Sp_n$-module and as a $B_r$-module. It follows that
$$f_0(A_0(n,r))=\Hom_{\Sp_n}(E^{\otimes r},S_0(n,r)^*)\cong\Hom_{\Sp_n}(S_0(n,r),E^{\otimes r})\cong E^{\otimes r}\text{\ and}$$
\begin{equation}\label{eq.g_0}
g_0(E^{\otimes r})=E^{\otimes r}\otimes_{B_r}E^{\otimes r}\cong\End_{B_r}(E^{\otimes r})^*=S_0(n,r)^*\cong A_0(n,r).
\end{equation}
In the proposition below $g$ denotes the inverse Schur functor from ${\rm mod}(k\Sym_t)$ to ${\rm mod}(S(n,t))$.
\begin{prop}\label{prop.sympyoung}\
\begin{enumerate}[{\rm(i)}]
\item If $n=0$ in $k$ and $t=0$, assume $r\ge4$. Then we have
$$g_0({\rm Ind}^{B_r}_{k\Sym_t} V)\cong g(k_{\rm sg}\otimes V)$$
as $\Sp_n$-modules, for every $k\Sym_t$-module $V$.
\item Let $\lambda\in\Lambda^+_0(m,r)$. If $\lambda=\emptyset$ and $n=0$ in $k$, then assume $r\ge4$. Then $g_0(\widetilde{\mc M}(\lambda))\cong S^\lambda E$ and if ${\rm char}\,k\ne2$, then $g_0({\mc M}(\lambda))\cong \bigwedge{\Nts}^\lambda E$.
\item Let $\lambda\in\Lambda^+_0(m,r)$.
      The $\Sp_n$-module $S^\lambda E$ has a unique indecomposable summand $J(\lambda)$ in which $\nabla_0(\lambda)$ has filtration multiplicity $>0$ and this multiplicity is equal to $1$.
      If ${\rm char}\,k\ne2$, then every summand of $\widetilde{\mc M}(\lambda)$ has a twisted Specht filtration and $f_0(J(\lambda))\cong\widetilde{\mc Y}(\lambda)$.
\end{enumerate}
\end{prop}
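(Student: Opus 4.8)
The plan is to prove (iii) in three stages, using part (ii) of the present proposition freely.

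\emph{Stage 1: the summand $J(\lambda)$.} As a $\GL_n$-module, $S^\lambda E=S^{\lambda_1}E\otimes\cdots\otimes S^{\lambda_l}E$ is a tensor product of costandard modules $\nabla((\lambda_i))$, so it has a good filtration, in which $\nabla(\mu)$ occurs with multiplicity the Kostka number $K_{\mu\lambda}$ (as one reads off from formal characters); this is $0$ unless $\mu\unrhd\lambda$ and $|\mu|=|\lambda|$, and $K_{\lambda\lambda}=1$. Restriction to $\Sp_n$ takes good $\GL_n$-filtrations to good $\Sp_n$-filtrations \cite[App.~A]{Don5}, and the multiplicity $(\nabla(\mu)|_{\Sp_n}:\nabla_0(\nu))$ depends only on formal characters, hence is independent of ${\rm char}\,k$; evaluating it in characteristic $0$ by Littlewood's restriction rule and using that $c^\mu_{\nu\beta}=0$ unless $|\beta|=|\mu|-|\nu|$, one finds it equals $\delta_{\mu\nu}$ when $|\mu|=|\nu|$. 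Thus $S^\lambda E$, as an $\Sp_n$-module, has a good filtration with $(S^\lambda E:\nabla_0(\lambda))=K_{\lambda\lambda}=1$. Writing $S^\lambda E=\bigoplus_i Q_i$ with the $Q_i$ indecomposable, each $Q_i$ has a good filtration \cite{Don7}, and the multiplicities of $\nabla_0(\lambda)$ sum to $1$; so exactly one $Q_i=:J(\lambda)$ has $(J(\lambda):\nabla_0(\lambda))=1$, the others $0$. This is the first assertion.

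\emph{Stage 2: an adjunction dictionary} (now ${\rm char}\,k\ne2$). By Theorem~\ref{thm.sympschur}(i), $\widetilde{\mc M}(\lambda)=f_0(S^\lambda E)$, and by (ii) the counit $g_0f_0(S^\lambda E)\to S^\lambda E$ is an isomorphism. By the triangle identity for the adjunction \eqref{eq.adjointiso}, the unit $\widetilde{\mc M}(\lambda)\to f_0g_0\widetilde{\mc M}(\lambda)$ is then an isomorphism; restricting to a direct summand $N$ of $\widetilde{\mc M}(\lambda)$ gives $N\cong f_0g_0N$ with $g_0N$ a direct summand of $g_0\widetilde{\mc M}(\lambda)\cong S^\lambda E$. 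Hence $g_0N$ has a good filtration, so $N\cong f_0g_0N$ has a twisted Specht filtration (the remark after Theorem~\ref{thm.sympschur}); applied to an arbitrary summand, this proves that every summand of $\widetilde{\mc M}(\lambda)$ has a twisted Specht filtration. Moreover $g_0$ sends an indecomposable summand $W$ of $\widetilde{\mc M}(\lambda)$ to an indecomposable summand of $S^\lambda E$: in a splitting $g_0W=A\oplus B$ one has $W\cong f_0A\oplus f_0B$, so some factor, say $f_0A=\Hom_{\Sp_n}(E^{\otimes r},A)$, vanishes; but $A$ is a summand of $S^\lambda E$, hence has a good filtration with sections $\nabla_0(\nu)$ ($\nu$ a partition of some $t'\le r$ with $r-t'$ even), and since $E^{\otimes r}$ is tilting, $\Hom_{\Sp_n}(E^{\otimes r},-)$ is exact on good-filtered modules with $\dim\Hom_{\Sp_n}(E^{\otimes r},\nabla_0(\nu))=\frac{r!}{s'!t'!2^{s'}}\dim S(\nu)>0$ (Lemma~\ref{lem.dimension}), forcing $A=0$. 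With $f_0g_0W\cong W$ and Krull--Schmidt this makes $W\mapsto g_0W$ a multiplicity-preserving bijection between the indecomposable summands of $\widetilde{\mc M}(\lambda)$ and of $S^\lambda E$, under which the multiplicity of $\widetilde{\mc S}(\mu)$ in a twisted Specht filtration matches that of $\nabla_0(\mu)$ in a good filtration (as $f_0$ is exact on good-filtered modules and $f_0(\nabla_0(\mu))=\widetilde{\mc S}(\mu)$).

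\emph{Stage 3: identifying $f_0(J(\lambda))$.} By Stage 2, $f_0(J(\lambda))$ is the unique indecomposable summand of $\widetilde{\mc M}(\lambda)$ whose twisted Specht filtration contains $\widetilde{\mc S}(\lambda)$, and it contains it once. But $\widetilde{\mc Y}(\lambda)$ has the same property: by \cite{HarPag} (the twisted form of the Young-module decomposition of a permutation module, together with Proposition~\ref{prop.filtration}) the summands of $\widetilde{\mc M}(\lambda)$ are twisted Young modules $\widetilde{\mc Y}(\nu)$ with $\nu=\lambda$, or $\nu\rhd\lambda$ of the same size, or $|\nu|<|\lambda|$; for $|\nu|<|\lambda|$ the twisted Specht sections of $\widetilde{\mc Y}(\nu)$ all have size $<|\lambda|$ by Proposition~\ref{prop.filtration}, and for $\nu\rhd\lambda$ with $|\nu|=|\lambda|$ the multiplicity of $\widetilde{\mc S}(\lambda)$ in $\widetilde{\mc Y}(\nu)$ is at most that in $\widetilde{\mc M}(\nu)=f_0(S^\nu E)$, which equals $(S^\nu E:\nabla_0(\lambda))=K_{\lambda\nu}=0$ by the computation of Stage 1. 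Hence $\widetilde{\mc S}(\lambda)$ occurs only in the summand $\widetilde{\mc Y}(\lambda)$ of $\widetilde{\mc M}(\lambda)$, and therefore $f_0(J(\lambda))\cong\widetilde{\mc Y}(\lambda)$.

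\emph{The main obstacle} is Stage 2: organising the adjunction into a clean Krull--Schmidt correspondence, and in particular checking that $g_0$ preserves indecomposability of the summands in play --- this rests on $E^{\otimes r}$ being tilting, which makes $\Hom_{\Sp_n}(E^{\otimes r},-)$ faithful and exact on modules with a good filtration. A subsidiary point is locating $\widetilde{\mc Y}(\lambda)$ inside $\widetilde{\mc M}(\lambda)$ as the unique summand carrying $\widetilde{\mc S}(\lambda)$, for which I would rely on the twisted Young-module decomposition of \cite{HarPag}.
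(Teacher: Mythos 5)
Your Stages 1 and 2 are sound and, apart from cosmetic differences, coincide with the paper's argument: the uniqueness of $J(\lambda)$ comes from the multiplicity computation $(S^\lambda E:\nabla_0(\lambda))=1$ (the paper deduces the key multiplicity statement from Lemma~\ref{lem.diagram}(ii) rather than from Littlewood's rule in characteristic $0$, but both are fine), and the adjunction/Krull--Schmidt dictionary is exactly how the paper gets ``every summand of $\widetilde{\mc M}(\lambda)$ has a twisted Specht filtration'' from part (ii), Theorem~\ref{thm.sympschur}(i) and Remarks~\ref{rems.g_0}.2. Your explicit check that $g_0$ preserves indecomposability of these summands (via faithfulness of $f_0$ on good-filtered modules) is a worthwhile elaboration of what the paper leaves implicit.

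Stage 3, however, has a genuine gap. You identify $f_0(J(\lambda))$ with $\widetilde{\mc Y}(\lambda)$ by invoking the Hartmann--Paget decomposition of $\widetilde{\mc M}(\lambda)$ into twisted Young modules $\widetilde{\mc Y}(\nu)$ and then eliminating all labels $\nu\ne\lambda$. That decomposition theorem is not available under the hypothesis of the proposition: it requires ${\rm char}\,k\ne2,3$ (and restrictions on $r$ when $\delta=0$), as the paper itself points out in Remarks~\ref{rems.g_0}.1, whereas (iii) is asserted for all ${\rm char}\,k\ne2$. Indeed, part of the point of this proposition is to obtain such structural information about $\widetilde{\mc M}(\lambda)$ by a route independent of those results. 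The paper instead returns to the \emph{definition} of $\widetilde{\mc Y}(\lambda)$ as the unique indecomposable summand of $\widetilde{\mc M}(\lambda)$ surjecting onto $Z_s\otimes_{k\Sym_t}\big(k_{\rm sg}\otimes Y(\lambda)\big)$: it takes the $S(n,t)$-injective hull $I(\lambda)\subseteq S^\lambda E$ of $\nabla(\lambda)$, uses $f(I(\lambda))=Y(\lambda)$ and Remarks~\ref{rems.sympschur}.1--2 to compute $f_0\big(I(\lambda)/O_\pi(I(\lambda))\big)\cong Z_s\otimes_{k\Sym_t}\big(k_{\rm sg}\otimes Y(\lambda)\big)$ with $\pi=\{\mu:|\mu|<t\}$, observes via \cite[Prop.~3.1]{HarPag} that this module is indecomposable, and deduces that $J(\lambda)$ is the unique summand of $I(\lambda)$ whose image under $f_0$ carries the required surjection. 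You would need to replace your appeal to the decomposition theorem by an argument of this kind (or otherwise produce the surjection $f_0(J(\lambda))\to Z_s\otimes_{k\Sym_t}(k_{\rm sg}\otimes Y(\lambda))$ directly). A secondary, repairable point: your phrase ``the multiplicity of $\widetilde{\mc S}(\lambda)$ in $\widetilde{\mc Y}(\nu)$'' presumes twisted Specht filtration multiplicities are well defined, which is exactly what fails in small characteristic; this can be avoided by phrasing everything in terms of the good-filtration multiplicities of $g_0\widetilde{\mc Y}(\nu)$, as your own Stage 2 dictionary suggests.
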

\begin{proof}
(i).\ Since ${\rm Ind}^{B_r}_{k\Sym_t} V\cong\big(\Hom_{\Sp_n}(E^{\otimes r},E^{\otimes t})\otimes k_{\rm sg}\big)\otimes_{k\Sym_t}V$ which is isomorphic to $\Hom_{\Sp_n}(E^{\otimes r},E^{\otimes t})\otimes_{k\Sym_t}\big(k_{\rm sg}\otimes V\big)$, this follows from Lemmas~\ref{lem.directsummand} and \ref{lem.tensor} applied to $E^{\otimes t}$.\\
(ii).\ By (i) (with $t=|\lambda|$) we have $g_0(\widetilde{\mc M}(\lambda))\cong g(M(\lambda))$ and $g_0({\mc M}(\lambda))\cong g(k_{\rm sg}\otimes M(\lambda))$. One easily verifies that $g(M(\lambda))\cong S^\lambda E$ and, in case ${\rm char}\,k\ne2$, $g(k_{\rm sg}\otimes M(\lambda))\cong\bigwedge{\Nts}^\lambda E$.\\
(iii).\ Put $t=|\lambda|$. Then $r-t=2s$ is even. The filtration multiplicity of $\nabla(\lambda)$ in $S^\lambda E$ is 1 and if $\nabla_0(\nu)$ has filtration multiplicity $>0$ in $\nabla(\mu)$, then either $\nu=\mu$ and the multiplicity is 1 or $|\nu|<|\mu|$ as one can easily deduce from Lemma~\ref{lem.diagram}(ii). We conclude that the filtration multiplicity of $\nabla_0(\lambda)$ in $S^\lambda E$ is 1. A direct summand of a module with a good filtration has a good filtration. So, by the Krull-Schmidt theorem, there is a unique indecomposable summand $J(\lambda)$ in which $\nabla_0(\lambda)$ has filtration multiplicity $>0$. This proves the first assertion. Now assume ${\rm char}\,k\ne2$. If $\lambda=\emptyset$, then $S^\lambda E=k$, $Z_{r/2}=I_{r/2}$ and $\widetilde{\mc S}(\lambda)=\widetilde{\mc M}(\lambda)=\widetilde{\mc Y}(\lambda)=I_{r/2}$ by \cite[Cor.~3.2]{HarPag} and the assertion is obvious. Now assume $\lambda\ne\emptyset$. By (ii) and Theorem~\ref{thm.sympschur}(i) we have $g_0(f_0(M))\cong M$ canonically for every direct summand of $S^\lambda E$ and $f_0(g_0(V))\cong V$ canonically for every direct summand of $\widetilde{\mc M}(\lambda)$. In particular, every direct summand of $\widetilde{\mc M}(\lambda)$ has a twisted Specht filtration.

Let $I(\lambda)\subseteq S^\lambda E$ be the $S(n,t)$-injective hull of $\nabla(\lambda)$. By \cite[3.6]{Don6} we have $f(I(\lambda))=Y(\lambda)$. Put $\pi=\pi_t=\{\mu\in\Lambda^+_0(m,r)\,|\,|\mu|<t\}$. By Remarks~\ref{rems.sympschur}, 1 and 2 we have $f_0(I(\lambda)/O_\pi(I(\lambda)))\cong Z_s\otimes_{k\Sym_t}\big(k_{\rm sg}\otimes Y(\lambda)\big)$. By \cite[Prop.~3.1]{HarPag} $Z_s\otimes_{k\Sym_t}\big(k_{\rm sg}\otimes Y(\lambda)\big)$ is indecomposable. Since $I(\lambda)/O_\pi(I(\lambda))$ has a good filtration, it must also be indecomposable. Now write $I(\lambda)=\bigoplus_{i=1}^l J_i$ with each $J_i$ an indecomposable $\Sp_n$-module. Then $I(\lambda)/O_\pi(I(\lambda))\cong\bigoplus_{i=1}^l J_i/O_\pi(J_i)$. So there is a unique $j$ such that $J_j/O_\pi(J_j)\cong I(\lambda)/O_\pi(I(\lambda))$ and $J_i\subseteq O_\pi(I(\lambda))$ for all $i\ne j$. Clearly we must have $J_j\cong J(\lambda)$. Furthermore, since the kernel of $J(\lambda)\to J(\lambda)/O_\pi(J(\lambda))$ has a good filtration, we have that $f_0(J(\lambda))$ surjects onto $Z_s\otimes_{k\Sym_t}\big(k_{\rm sg}\otimes Y(\lambda)\big)$. So $f_0(J(\lambda))\cong\widetilde{\mc Y}(\lambda)$.
\end{proof}

\begin{rems}\label{rems.g_0}
1.\ Drop the assumption that $\delta=-n$. Assume that ${\rm char}\,k\ne2,3$ and also $r\ne2,4$ in case $\delta=0$. Then it follows from \cite[Thm~1.6]{ErdSa} and the results in \cite{HarPag} that a direct summand of a module with a Specht or twisted Specht filtration has again such a filtration. In particular this applies to the Young modules for the Brauer algebra and their twisted versions.\\
2.\ The class of $S_0(n,r)$-modules $M$ for which $g_0(f_0(M))\cong M$ canonically, is closed under taking direct summands and direct sums. In particular it contains the injective $S_0(n,r)$-modules, since, by \eqref{eq.g_0}, it contains $A_0(n,r)$. For the same reason the class of $B_r$-modules $V$ for which $f_0(g_0(V))\cong V$ canonically, contains the projective $B_r$-modules.\\
3.\ For $\lambda\in X_0$ let $I_\lambda$ be the $\lambda$-weight space of $A_0(n,r)$ for the action of $T_0$ which comes from left multiplication in $G$.
One shows as in \cite[2.4]{Don2} that $\Hom(-,I_\lambda)\cong M\mapsto (M_\lambda)^*$ as functors on ${\rm mod}(S_0(n,r))$. In particular, $I_\lambda$ is an injective $S_0(n,r)$-module. Furthermore, $A_0(n,r)=\bigoplus_{\lambda\in\Lambda_0(m,r)}I_\lambda$ and $I_\lambda\cong I_{w(\lambda)}$ for every $w$ in the Weyl group $W$. So every injective $S_0(n,r)$-module is a direct sum of direct summands of $I_\lambda$'s, $\lambda\in\Lambda^+_0(m,r)$. Since the nondegenerate bilinear form of $E^{\otimes r}$ is also nondegenerate on the weight spaces of $E^{\otimes r}$ for $T_0$, we have that those weight spaces are self-dual $B_r$-modules. So $f_0(I_\lambda)=(E^{\otimes r})_{\lambda,0}$, the $\lambda$-weight space of $E^{\otimes r}$ for $T_0$. Note that $g_0((E^{\otimes r})_{\lambda,0})=I_\lambda$ by the preceding remark. By Lemma~\ref{lem.diagram}(i) we have $(E^{\otimes r})_{\lambda,0}\cong M(\lambda)$ if $\lambda$ is a partition of $r$. Here the diagrams with a horizontal edge act as $0$ on $M(\lambda)$.
By \cite[Thm 9.51]{Rot} (with $S=C=k$), we have
$$L^ig_0(V)=\Tor_i^{B_r}(E^{\otimes r},V)\cong\Ext^i_{B_r}(V,E^{\otimes r})^*$$
as vector spaces, for every $B_r$-module $V$. Here $L^ig_0$ denotes the $i^{\rm th}$ left derived functor of $g_0$.
So to show that, under certain conditions on $m$, $r$ and the field $k$, $L^1g_0(V)=0$ whenever $V$ has a twisted Specht filtration, it suffices to show that $\Ext^1_{B_r}(\widetilde{\mc S}(\lambda),(E^{\otimes r})_{\mu,0})=0$ for all $\lambda,\mu\in\Lambda^+_0(m,r)$. This would mean that $g_0$ maps exact sequences of modules with a twisted Specht filtration to exact sequences and it would imply as in the proof of \cite[Prop.~10.6]{Don8}
that $g_0(\widetilde{\mc S}(\lambda))=\nabla_0(\lambda)$. This, in turn, would imply that $g_0$ maps $B_r$-modules with a twisted Specht filtration to $\Sp_n$-modules with a good filtration.
\end{rems}

\section{The Jantzen sum formula and a block result for $S_0(n,r)$}\label{s.jantzen}

The notation is as in the previous section. In this section we assume that the field $k$ is of positive characteristic $p>2$. Let $u$ be the unique integer satisfying $-p/2<n-up<p/2$. Put $$\delta=-(n-up).$$
Throughout this section we will be working with the root system of type $C_m$ in the vector space $\mb R^m$ endowed with the standard inner product
$$\la x,y\ra=\sum_{i=1}^mx_iy_i.$$
The weight lattice is identified with $\mb Z^m$. The positive roots are $2\e_i$, $1\le i\le m$, and $\e_i\pm\e_j$, $1\le i<j\le m$. Recall that the Weyl group $W(C_m)$ acts by signed permutations and that the {\it dot action} of $W(C_m)$ on $\mb R^m$ is given by
$$w\cdot x=w(x+\rho)-\rho,$$
where
$$\rho=(m,\ldots,2,1).$$
Note that $\mb Z^m$ is stable under the dot action.

Let $G$ be a reductive group and let $X$ be the group of weights relative to a fixed maximal torus. Jantzen has defined for every Weyl module $\Delta(\lambda)$ of $G$ a descending filtration $\Delta(\lambda)=\Delta(\lambda)^0\supseteq\Delta(\lambda)^1\supseteq\cdots$ such that $\Delta(\lambda)/\Delta(\lambda)^1\cong L(\lambda)$ and $\Delta(\lambda)^i=0$ for $i$ big enough. The Jantzen sum formula \cite[II.8.19]{Jan} relates the formal characters of the $\Delta(\lambda)^i$ with the Weyl characters $\chi(\mu)$:
\begin{equation}\label{eq.jantzensum}
\sum_{i>0}{\rm ch}\,\Delta(\lambda)^i=\sum\nu_p(lp)\chi(s_{\alpha,l}\cdot\lambda)\ ,
\end{equation}
where the sum on the right is over all pairs $(\alpha,l)$, with $l$ an integer $\ge1$ and $\alpha$ a positive root such that $\la\lambda+\rho,\alpha^\vee\ra-lp>0$. Here $\nu_p$ is the $p$-adic valuation, $\alpha^\vee=\frac{2}{\la\alpha,\alpha\ra}\alpha$ and $s_{\alpha,l}$ is the affine reflection of $\mb R\otimes_{\mb Z}X$ defined by $s_{\alpha,l}(x)=x-a\alpha$, where $a=\la x,\alpha^\vee\ra-lp$. It should be noted that the $s_{\alpha,l}\cdot\lambda$ are in general not dominant. But if $\chi(s_{\alpha,l}\cdot\lambda)\ne0$, then it can be written as $\pm\chi(\mu)$ for some dominant weight $\mu$ using \cite[II.5.9(1)]{Jan}. The $\chi(\mu)$, $\mu$ a dominant weight, form a $\mb Z$-basis of $(\mb Z X)^W$, where $W$ denotes the Weyl group.

We are now going to prove a strengthened version of the Jantzen sum formula for the symplectic group in a certain generic situation and deduce from this formula a block result for $S_0(n,r)$. In Section~\ref{s.blocks} we will then deduce from this the block result \cite[Thm~4.2]{CdVM2} for the Brauer algebra in characteristic $0$. Following Cox, De Visscher and Martin we define $\hat\rho\in\frac{1}{2}\mb Z^r$ by
$$\hat\rho=\big(-\frac{\delta}{2},-\frac{\delta}{2}-1,\ldots,-\frac{\delta}{2}-(r-1)\big)$$
and the {\it star action} (called \lq\lq dot action" in \cite{CdVM2}) of $W(C_r)$ on $\mb R^r$ by
$$w\star x=w(x+\hat\rho)-\hat\rho.$$
Note again that $\mb Z^r$ is stable under this action. The Weyl group $W(C_r)$ contains the Weyl group $W(D_r)$ of type $D_r$ which consists of those signed permutations that involve an even number of sign changes. When $m\ge r$ we will use the convention that $r$-tuples can also be considered as $m$-tuples by extending them with zeros. For a partition $\lambda$, we denote its length by $l(\lambda)$ and we denote the transposed partition by $\lambda'$.
\begin{thm}\label{thm.sumformula}
Let $\lambda$ be a partition with $l(\lambda),l(\lambda')\le r$. If $n+2r<p$, then $\Delta_0(\lambda)$ is irreducible. If $|\delta|+2r<p/2$ and $u\ne0$, then $m>r$ and we have
\begin{equation}\label{eq.sumformula}
\sum_{i>0}{\rm ch}\,\Delta_0(\lambda)^i=
\nu_p(up)\sum\schi(s_\alpha\star\lambda)\ ,
\end{equation}
where the sum on the right is over all positive roots $\alpha=\e_i+\e_j$, $1\le i<j\le r$, with $\la\lambda+\hat\rho,\alpha^\vee\ra>0$.
\end{thm}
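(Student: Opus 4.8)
The plan is to apply the ordinary Jantzen sum formula \eqref{eq.jantzensum} for the symplectic group $\Sp_{2m}$ (here $\delta=-(n-up)$ so that $n=up-\delta$, and one notes that $S_0(n,r)$-modules correspond to the $\nabla_0$-filtered part, and $\Delta_0(\lambda)$ is the Weyl module for $\Sp_{2m}$), and then to show that, under the hypothesis $|\delta|+2r<p/2$ with $u\ne 0$, almost all terms on the right-hand side vanish and the surviving ones organize into the star-action sum. First I would record the elementary numerical consequences of the hypotheses: since $\lambda$ is a partition with $l(\lambda)\le r$ and $|\delta|+2r<p/2$, while $n-up=-\delta$ has $|n-up|<p/4$ say, one gets $m>r$ (because $2m=n=up-\delta$ with $u\ne0$ forces $m\ge (p-|\delta|)/2>r$), so $\lambda$ has at most $r$ nonzero parts and the relevant roots only involve the first $r$ coordinates. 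I would also note $\rho=(m,\ldots,2,1)$ and compute $\langle\lambda+\rho,\alpha^\vee\rangle$ for each positive root type.

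**Classifying the surviving pairs $(\alpha,l)$.** The heart of the argument is to go through the three families of positive roots of $C_m$ and, for each, determine which pairs $(\alpha,l)$ give $\langle\lambda+\rho,\alpha^\vee\rangle-lp>0$ with $\nu_p(lp)>0$ contributing a nonzero Weyl character. For $\alpha=2\e_i$: $\alpha^\vee=\e_i$, so $\langle\lambda+\rho,\alpha^\vee\rangle=\lambda_i+(m+1-i)$, which for $i\le r$ is roughly $m$, i.e.\ close to $(up-\delta)/2$; I'd check the inequality $\lambda_i+m+1-i-lp>0$ forces $l=$ (something like $u/2$ if $u$ is even) and then that $s_{\alpha,l}\cdot\lambda$ is a weight whose Weyl character vanishes (is $W$-conjugate to something with a repeated coordinate after adding $\rho$), OR collapses to a star-action term; similarly for $i>r$, where $\lambda_i=0$ and one must show these never contribute. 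For $\alpha=\e_i-\e_j$ ($i<j$): $\langle\lambda+\rho,\alpha^\vee\rangle=\lambda_i-\lambda_j+(j-i)$, which is bounded by roughly $2r<p/2-|\delta|<p$, hence $lp$ can never be subtracted to leave something positive unless... actually it's $<p$ so no valid $l\ge1$ exists — these roots contribute nothing. For $\alpha=\e_i+\e_j$ ($i<j$): $\alpha^\vee=\e_i+\e_j$, $\langle\lambda+\rho,\alpha^\vee\rangle=\lambda_i+\lambda_j+(2m+2-i-j)$, which is approximately $2m=up-\delta$; the condition $>lp$ then pins $l$ down (essentially $l=u$ when the index is small, for $i<j\le r$), $\nu_p(lp)=\nu_p(up)+1$... wait, I'd track this carefully: actually we want $\nu_p(lp)$, and the claim is the coefficient is exactly $\nu_p(up)$, so presumably the only surviving $l$ has $\nu_p(l)=\nu_p(u)$ and $\nu_p(lp)=\nu_p(u)+1$; here I must reconcile: possibly the formula as stated has $\nu_p(up)$ meaning $\nu_p(u)+1$, and the surviving $l$ is $l=u$ exactly, so $\nu_p(lp)=\nu_p(up)$. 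I'd verify $l=u$ is forced: $\langle\lambda+\rho,\alpha^\vee\rangle=\lambda_i+\lambda_j+2m+2-i-j = \lambda_i+\lambda_j+up-\delta+2-i-j$; subtracting $up$ leaves $\lambda_i+\lambda_j-\delta+2-i-j$, which lies strictly between $0$ and $p$ precisely when $1\le i<j\le r$ (using $|\delta|+2r<p/2$), and for $i$ or $j$ exceeding $r$ one checks it's $\le 0$. So $l=u$ is the only valid choice, and for $1\le i<j\le r$ the condition $\langle\lambda+\rho,\alpha^\vee\rangle-up>0$ is exactly $\lambda_i+\lambda_j+2-i-j-\delta>0$, i.e.\ $\langle\lambda+\hat\rho,\alpha^\vee\rangle>0$ after matching $\hat\rho_k=-\delta/2-(k-1)$.

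**Identifying $s_{\alpha,u}\cdot\lambda$ with $s_\alpha\star\lambda$.** Finally I'd check the reflection bookkeeping: for $\alpha=\e_i+\e_j$ and $l=u$, the affine reflection $s_{\alpha,u}$ acts by $x\mapsto x-(\langle x,\alpha^\vee\rangle-up)\alpha$; applied to $\lambda$ in the dot action this gives $s_{\alpha,u}\cdot\lambda = s_\alpha(\lambda+\rho)-\rho - (\text{correction from the }up)$, and since $\rho-\hat\rho$ differs from a constant shift by exactly the amount $up$ contributes along $\alpha$ (this is the standard translation-by-$up$ identity, cf.\ the analogous computation in \cite[II.8]{Jan}), one gets $s_{\alpha,u}\cdot\lambda = s_\alpha\star\lambda$ where $s_\alpha$ is now the ordinary (linear) reflection in $W(C_r)\subseteq W(C_m)$. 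Then \eqref{eq.jantzensum} reads $\sum_{i>0}\ch\Delta_0(\lambda)^i = \sum_{1\le i<j\le r,\ \langle\lambda+\hat\rho,\alpha^\vee\rangle>0}\nu_p(up)\,\schi(s_\alpha\star\lambda)$, which is \eqref{eq.sumformula}. For the first assertion ($n+2r<p\Rightarrow\Delta_0(\lambda)$ irreducible), I'd run the same root-by-root analysis but now $u=0$, $\delta=-n$, and every $\langle\lambda+\rho,\alpha^\vee\rangle<p$ (using $l(\lambda),l(\lambda')\le r$ so $\lambda_i\le r$, and $2m+2\le n+2\le p-2r+2$... one checks $\lambda_i+\lambda_j+2m+2-i-j\le 2r+n<p$), so no valid $(\alpha,l)$ exists, hence $\sum_{i>0}\ch\Delta_0(\lambda)^i=0$, i.e.\ $\Delta_0(\lambda)^1=0$ and $\Delta_0(\lambda)=L_0(\lambda)$.

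**Main obstacle.** The delicate part is the second-order bookkeeping for the roots $2\e_i$ (and confirming the $\e_i-\e_j$ and mixed-index $\e_i+\e_j$ roots genuinely drop out): one must show that whatever terms $s_{2\e_i,l}\cdot\lambda$ arise either have vanishing Weyl character (the weight $s_{2\e_i,l}\cdot\lambda+\rho$ has a zero or repeated coordinate, or a coordinate-pair summing to zero) or cancel in pairs, rather than contributing spurious terms; this requires carefully using that $\lambda$ is a genuine partition (so $\lambda_i\ge\lambda_{i+1}$ and the $\rho$-shifted coordinates are strictly decreasing and positive in the first $m$ slots) together with the tight bound $|\delta|+2r<p/2$ to control the single possible value of $l$. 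I expect this case analysis — essentially an $\mathfrak{sl}_2$-type reflection computation inside each coordinate — to be the bulk of the work, with the rest being the translation-functor identity relating $\rho$ and $\hat\rho$, which is routine.
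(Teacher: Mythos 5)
Your overall strategy---specialize the Jantzen sum formula \eqref{eq.jantzensum} to $\Sp_n$ and sort the contributions by root type---is exactly the paper's, and your identification of the surviving terms $\alpha=\e_i+\e_j$, $i<j\le r$, $l=u$ with the star-action sum (including the translation between $\rho$ and $\hat\rho$ and the coefficient $\nu_p(up)$) is correct. But the elimination of all the other terms has genuine gaps, all stemming from underestimating how many pairs $(\alpha,l)$ satisfy $\la\lambda+\rho,\alpha^\vee\ra-lp>0$. First, for $\alpha=\e_i-\e_j$ you claim the pairing $\lambda_i-\lambda_j+(j-i)$ is bounded by roughly $2r<p$, so that no $l\ge1$ qualifies. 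This is false: $j$ runs up to $m=(up-\delta)/2$, which exceeds $p$ already for $u\ge2$ with $\delta<0$ (and certainly for $u\ge3$), so such pairs do occur. One must instead prove that $s_{\alpha,l}(\lambda+\rho)$ always acquires a repeated entry, using that the reversed tail of $\lambda+\rho$ is the interval $\{1,\ldots,m-r\}$; this forces the moved entry into that interval and hence $\schi(s_{\alpha,l}\cdot\lambda)=0$. Second, for a fixed $\alpha=\e_i+\e_j$ with $i<j\le r$ and $\la\lambda+\rho,\alpha^\vee\ra\approx up$, \emph{every} $l\in\{1,\ldots,u\}$ with positive residue contributes a summand $\nu_p(lp)\schi(s_{\alpha,l}\cdot\lambda)$, not just $l=u$; you must show the characters vanish for $l\ne u$, which again is a repeat/sign argument (one bounds the residue $a$ by $2r<p/2$ using nonvanishing of the character, compares with the value at $l=u$, which lies in $(-\delta-2r,-\delta+2r)\subseteq(-p/2,p/2)$, and concludes $l=u$).

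The most serious gap is the one you yourself flag as the main obstacle: the long roots $2\e_i$ and the roots $\e_i+\e_j$ with $j>r$. Neither family drops out for the reasons you hope. Their surviving terms have \emph{nonzero} Weyl characters in general, and they do not individually collapse to star-action terms; what actually happens is that there is an explicit bijection between the surviving pairs $(\e_i+\e_j,l)$ with $j>r$ and the surviving pairs $(2\e_i,l)$, under which the two weights $s_{\alpha,l}(\lambda+\rho)$ differ by a single sign change of one coordinate (which one shows is forced to equal $-i$ versus $+i$ in the reversed indexing), so that the corresponding characters are negatives of each other and the two families cancel pairwise. Without this cancellation mechanism, and without the two vanishing arguments above, the right-hand side of \eqref{eq.jantzensum} retains extra terms and \eqref{eq.sumformula} does not follow. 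Your treatment of the first assertion ($n+2r<p$ implies $\Delta_0(\lambda)$ irreducible) is fine, since there the sum is genuinely empty.
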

\begin{proof}
The first assertion follows immediately from the fact that then the sum on the right in \eqref{eq.jantzensum} is empty. Now we assume that $|\delta|+2r<p/2$ and $u\ne0$. Clearly $m>r$.
The idea is to deduce \eqref{eq.sumformula} from \eqref{eq.jantzensum} by showing that certain summands on the right hand side of \eqref{eq.jantzensum} may be omitted.
In the proof below we will need some extra notation. For $i\in\{1,\ldots,m\}$, we put $i'=m+1-i$. Note that this differs from the notation in Subsection~\ref{ss.sympschur}. For $x\in\mb R^m$ we define $\breve{x}$ to be the reversed tuple of $x$. So $\breve{x}_i=x_{i'}$. Note that $\breve{\rho}_i=i$ and that the first $m-r$ entries of $(\lambda+\rho)\breve{\ }$ form the interval $\{1,2\ldots,m-r\}$. Furthermore, $\alpha$ will always denote a positive root in the root system of type $C_m$. We will use the following fact. Let $\lambda\in\mb Z^m$. Then
\begin{align*}
&\schi(\lambda)\ne0\text{\quad if and only if}\\
&(\lambda+\rho)_i\ne0\text{\quad for all\ }i\in\{1,\ldots,m\}\text{\quad and}\\
&(\lambda+\rho)_i\ne\pm(\lambda+\rho)_j\text{\quad for all\ }i,j\in\{1,\ldots,m\}\text{\ with\ }i\ne j.
\end{align*}
The proof will consist of three lemma's.

\begin{lemgl}\label{lem.e_i-e_j}
Assume $\alpha=\e_i-\e_j$, $1\le i<j\le m$ and $\la\lambda+\rho,\alpha^\vee\ra=a+lp$, $a,l>0$. Then $\schi(s_{\alpha,l}\cdot\lambda)=0$.
\end{lemgl}

\begin{proof}
First we redefine $i$ and $j$ by replacing $(i,j)$ by $(j',i')$. So $1\le i<j\le m$ and $\alpha=\alpha^\vee=\e_{j'}-\e_{i'}$. We have $\la\lambda+\rho,\alpha^\vee\ra=\breve{\lambda}_j+j-(\breve{\lambda}_i+i)$ and $s_{\alpha,l}\cdot\lambda=\lambda-a\alpha$. So $s_{\alpha,l}(\lambda+\rho)\breve{{}_i}=\breve{\lambda}_i+i+a$ and $s_{\alpha,l}(\lambda+\rho)\breve{{}_j}=\breve{\lambda}_j+j-a$.

We have $a\le\la\lambda+\rho,\alpha^\vee\ra-p\le m+r-(\breve{\lambda}_i+i)-p$. So $\breve{\lambda}_i+i+a\le m+r-p<m-r$, since $2r<p$. It follows that $i<m-r$ and $\breve{\lambda}_i=0$. Now $i<i+a<\breve{\lambda}_j+j$ and $i+a<m-r$. So $s_{\alpha,l}\cdot\lambda+\rho=s_{\alpha,l}(\lambda+\rho)$ contains a repeat and $\schi(s_{\alpha,l}\cdot\lambda)=0$.
\end{proof}

\begin{lemgl}\label{lem.2e_i and lem.e_i+e_j}
Let $\Phi_1$ be the set of roots $\e_i+\e_j$, $1\le i<j\le m$, $j>r$ and let $\Phi_2$ be the set of roots $2\e_i$, $1\le i\le m$. Furthermore, let $S_1$ be the set of pairs $(\alpha,l)$ such that $\alpha\in\Phi_1$, $l$ an integer $\ge1$, $\la\lambda+\rho,\alpha^\vee\ra-lp>0$ and $\schi(s_{\alpha,l}\cdot\lambda)\ne0$, and let $S_2$ be the corresponding set for $\Phi_2$. Then there exists a map $\varphi:S_1\to\Phi_2$ such that:
\begin{enumerate}[{\rm(i)}]
\item $(\alpha,l)\mapsto(\varphi(\alpha,l),l)$ is a bijection from $S_1$ onto $S_2$.
\item $\schi(s_{\alpha,l}\cdot\lambda)=-\schi(s_{\varphi(\alpha,l),l}\cdot\lambda)$.
\end{enumerate}
\end{lemgl}

\begin{proof}
Let $(\alpha,l)\in S_1$. Write $\alpha=\alpha^\vee=\e_{j'}+\e_{i'}$, $1\le i<j\le m$, $i\le m-r$. Put $a=\la\lambda+\rho,\alpha^\vee\ra-lp$. We have $\breve{\lambda}_i=0$, $\la\lambda+\rho,\alpha^\vee\ra=\breve{\lambda}_j+j+i$ and $s_{\alpha,l}\cdot\lambda=\lambda-a\alpha$. So $s_{\alpha,l}(\lambda+\rho)\breve{{}_i}=i-a$ and $s_{\alpha,l}(\lambda+\rho)\breve{{}_j}=\breve{\lambda}_j+j-a$.

We have $i-a<i<\breve{\lambda}_j+j$. So, if $i-a\ge0$, then $s_{\alpha,l}(\lambda+\rho)$ contains a repeat or a zero. Therefore $a-i>0$. We have $a\le\la\lambda+\rho,\alpha^\vee\ra-p\le m+r+i-p<m-r+i$, since $2r<p$. So $0<a-i\le m-r$. Now $a-i<\breve{\lambda}_j+j$. So if $a-i\ne i$, then $s_{\alpha,l}(\lambda+\rho)$ contains a repeat up to sign. Therefore $a=2i$ and $s_{\alpha,l}(\lambda+\rho)\breve{{}_i}=-i$. Now put $\varphi(l,\alpha)=2\e_{j'}$. Note that $(2\e_{j'})^\vee=\e_{j'}$. So $\la\lambda+\rho,\varphi(l,\alpha)^\vee\ra=\breve{\lambda}_j+j=i+lp$. Furthermore, $s_{\alpha,l}(\lambda+\rho)$ is obtained from $s_{\varphi(l,\alpha),l}(\lambda+\rho)$ by changing the sign of the $i^{\rm th}$ coordinate. This proves (ii) and that $(\alpha,l)\mapsto(\varphi(\alpha,l),l)$ is an injection from $S_1$ to $S_2$.


Now let $(\beta,l)\in S_2$. Write $\beta=2\e_{j'}$, $1\le j\le m$. Define $i$ by the equation $\breve{\lambda}_j+j=i+lp$. Clearly $i>0$. Furthermore, $i\le\breve{\lambda}_j+j-p<j$, since $r<p$ and $i\le\breve{\lambda}_j+j-p<m-r$, since $2r<p$. Put $\alpha=\alpha^\vee=\e_{j'}+\e_{i'}$. From the previous computations it now follows that $(\alpha,l)\in S_1$ and it is clear that $\varphi(\alpha,l)=\beta$. This proves (i).
\end{proof}

\begin{lemgl}\label{lem.e_i+e_j}
Assume $\alpha=\e_i+\e_j$, $1\le i<j\le r$, $\la\lambda+\rho,\alpha^\vee\ra=a+lp$, $a,l>0$ and $\schi(s_{\alpha,l}\cdot\lambda)\ne0$. Then $l=u$. Furthermore, the entries of $s_{\alpha,u}(\lambda+\rho)$ are distinct and strictly positive.
\end{lemgl}

\begin{proof}
First we redefine $i$ and $j$ by replacing $(i,j)$ by $(j',i')$. So $m-r<i<j\le m$ and $\alpha=\alpha^\vee=\e_{j'}+\e_{i'}$. Note that the first $m-r$ entries of $s_{\alpha,l}(\lambda+\rho)\breve{\ }$ form the interval $\{1,2\ldots,m-r\}$. We have $0<a<\la\lambda+\rho,\alpha^\vee\ra=\breve{\lambda}_j+j+\breve{\lambda}_i+i=a+lp$. Clearly $\breve{\lambda}_i+i-a=s_{\alpha,l}(\lambda+\rho)\breve{{}_i}\ne0$.
We have $a\le\breve{\lambda}_j+j+\breve{\lambda}_i+i-p\le m+r+\breve{\lambda}_i+i-p<m-r+\breve{\lambda}_i+i$, since $2r<p$. So if $\breve{\lambda}_i+i-a<0$, then $0<a-\breve{\lambda}_i-i<m-r$ and $s_{\alpha,l}(\lambda+\rho)\breve{\ }$ would contain a repeat up to sign. So we have $\breve{\lambda}_i+i-a>0$. Since $\breve{\lambda}_j+j-a>\breve{\lambda}_i+i-a$ this shows that all entries of $s_{\alpha,l}(\lambda+\rho)$ are (distinct and) strictly positive. If $\breve{\lambda}_i+i-a\le m-r$, then $s_{\alpha,l}(\lambda+\rho)\breve{\ }$ would contain a repeat. So we have $\breve{\lambda}_i+i-a>m-r$. It follows that $a<\breve{\lambda}_i+i-m+r< m+r-(m-r)=2r$. So $0<\la\lambda+\rho,\alpha^\vee\ra-lp=a<2r<p/2$. On the other hand $-\delta-2r=n-2r-up<\la\lambda+\rho,\alpha^\vee\ra-up<n+2r-up=-\delta+2r$. Since $|\delta|+2r<p/2$, this implies that $u=l$.
\end{proof}

We can now finish the proof of \eqref{eq.sumformula}. By Lemmas~\ref{lem.e_i-e_j} and \ref{lem.2e_i and lem.e_i+e_j} we can restrict the sum on the right in \eqref{eq.jantzensum} to positive roots $\alpha=\e_i+\e_j$, $1\le i<j\le r$ with $\la\lambda+\rho,\alpha^\vee\ra-lp>0$. Assume now that for such a root $\alpha$ we have $\schi(s_{\alpha,l}\cdot\lambda)\ne0$. Then we have $l=u$, by Lemma~\ref{lem.e_i+e_j}. Now $\hat\rho_i=\rho_i-(u/2)p$, so $\la\lambda+\hat\rho,\alpha^\vee\ra=\lambda_i+\rho_i+\lambda_j+\rho_j-up= \la\lambda+\rho,\alpha^\vee\ra-lp>0$. So $s_{\alpha,u}\cdot\lambda=s_\alpha\star\lambda$.
On the other hand, if $\la\lambda+\hat\rho,\alpha^\vee\ra>0$ and $\schi(s_\alpha\star\lambda)\ne0$, then $(\alpha,u)$ gives the same nonzero summand in the sum on the right of \eqref{eq.jantzensum}.
\end{proof}

\begin{cornn}
Assume that $|\delta|+2r<p/2$ and that $\delta\ge2r-1$. Then, under the above assumptions, $\Delta_0(\lambda)$ is irreducible for any partition $\lambda$ with $l(\lambda),l(\lambda')\le r$.
\end{cornn}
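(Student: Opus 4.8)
The plan is to feed the strengthened Jantzen sum formula, Theorem~\ref{thm.sumformula}, into the problem and show that under the stated hypotheses its right-hand side reduces to the empty sum, so that $\sum_{i>0}{\rm ch}\,\Delta_0(\lambda)^i=0$; irreducibility of $\Delta_0(\lambda)$ follows at once.

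First I would check that the second assertion of Theorem~\ref{thm.sumformula} is the relevant one, i.e. that $u\ne0$. Indeed, if $u=0$ then $\delta=-n\le-2$, which contradicts $\delta\ge2r-1\ge-1$. Hence $u\ne0$, $m>r$, and \eqref{eq.sumformula} holds, the sum on its right-hand side ranging over the positive roots $\alpha=\e_i+\e_j$ with $1\le i<j\le r$ for which $\la\lambda+\hat\rho,\alpha^\vee\ra>0$.

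Next I would show that no such root exists. For $\alpha=\e_i+\e_j$ one has $\alpha^\vee=\alpha$, and since $\hat\rho_i=-\delta/2-(i-1)$,
$$\la\lambda+\hat\rho,\alpha^\vee\ra=\lambda_i+\lambda_j-\delta-(i+j-2).$$
Because $l(\lambda')=\lambda_1\le r$, every part of $\lambda$ is at most $r$, so $\lambda_i+\lambda_j\le2r$; and since $i<j$ we have $i+j-2\ge1$. Combining these with $\delta\ge2r-1$ gives
$$\la\lambda+\hat\rho,\alpha^\vee\ra\le2r-\delta-1\le2r-(2r-1)-1=0,$$
so the defining condition of the sum is never satisfied and the right-hand side of \eqref{eq.sumformula} vanishes. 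Therefore $\sum_{i>0}{\rm ch}\,\Delta_0(\lambda)^i=0$; as each summand is the formal character of a module, hence has nonnegative coefficients, every $\Delta_0(\lambda)^i$ with $i\ge1$ must be zero. In particular $\Delta_0(\lambda)^1=0$, and since $\Delta_0(\lambda)/\Delta_0(\lambda)^1\cong L_0(\lambda)$ we conclude that $\Delta_0(\lambda)$ is irreducible.

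I do not expect any genuine obstacle here: once Theorem~\ref{thm.sumformula} is available the corollary is a one-line estimate. The only points needing a little care are the preliminary remark that $\delta\ge2r-1$ already forces $u\ne0$ (so that the full sum formula, not merely the first assertion of Theorem~\ref{thm.sumformula}, is the input one needs), the normalisations $\hat\rho_i=-\delta/2-(i-1)$ and $\alpha^\vee=\alpha$ for the short roots $\e_i+\e_j$ of $C_r$, and the observation that the hypothesis $l(\lambda')\le r$ is exactly the bound $\lambda_1\le r$ that makes $\lambda_i+\lambda_j\le2r$ work.
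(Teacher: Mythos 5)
Your proof is correct and follows the paper's own argument: the paper likewise notes one may reduce to $u\ne0$ and then observes $\la\lambda+\hat\rho,\alpha^\vee\ra\le-\delta+2r-1\le0$, so the sum in \eqref{eq.sumformula} is empty. Your only (harmless) variation is that you rule out $u=0$ outright from $\delta\ge2r-1$, whereas the paper disposes of that case via the first assertion of Theorem~\ref{thm.sumformula}.
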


\begin{proof}
Clearly we may assume that $u\ne0$. The assertion now follows immediately from \eqref{eq.sumformula} and the fact that $\la\lambda+\hat\rho,\alpha^\vee\ra\le-\delta+2r-1$.
\end{proof}

\begin{remnn}
Note that for the proofs of Lemmas~\ref{lem.e_i-e_j} and \ref{lem.2e_i and lem.e_i+e_j} we only needed that $r<2p$ and no assumptions on $\delta$.
\end{remnn}

We now turn our attention to the blocks of the symplectic Schur algebra. Let $S$ be a finite dimensional algebra. Fix a labeling set $X$ for the isomorphism classes of irreducible $S$-modules. For $\lambda\in X$, let $L(\lambda)$ be the corresponding irreducible $S$-module. On $X$ the {\it block relation} is defined as the smallest equivalence relation $\sim$ such that $\Ext^1_S(L(\lambda),L(\mu))\ne0$ implies $\lambda\sim\mu$. When $\lambda\sim\mu$, we say that $L(\lambda)$ and $L(\mu)$ (or just $\lambda$ and $\mu$) are in the same block. If $S$ is quasi-hereditary and if for all $\lambda$, $\Delta(\lambda)$ and $\nabla(\lambda)$ have the same composition factors, then $\sim$ is the smallest equivalence relation such that $[\Delta(\lambda):L(\mu)]\ne0$ implies $\lambda\sim\mu$. Here we denote for an $S$-module $M$ the multiplicity of $L(\mu)$ as a composition factor of $M$ by $[M:L(\mu)]$.

In the remainder of this section we assume that $m\ge r$. To prove the block result, we need some more notation. For $x,y\in\mb R^r$ we write $x\subseteq y$ if $x_i\le y_i$ for all $i\in\{1,\ldots,r\}$. Furthermore, we define $x\cap y=\min(x_i,y_i)_{i\in\{1,\ldots,r\}}$. Note that $x\cap y$ is (strictly) decreasing if this holds for $x$ and $y$. We have $(x+z)\cap(y+z)=(x\cap y)+z$ for any $z\in\mb R^r$. The next lemma is suggested by the proof of \cite[Thm.~4.2]{CdVM1}: there it is shown that on $\Lambda_0^+(m,r)$ conjugacy under the star action is equivalent to \lq\lq balancedness" which clearly has the property stated in (ii).

\begin{prop}\label{prop.Wmin}
Let $\lambda,\mu\in\mb R^r$.
\begin{enumerate}[{\rm (i)}]
\item Assume $\lambda$ and $\mu$ are strictly decreasing. If they are conjugate under the action of $W(C_r)$, then the same holds for $\lambda$ and $\lambda\cap\mu$.
\item Assume $\lambda$ and $\mu$ are decreasing. If they are conjugate under the star action of $W(C_r)$, then the same holds for $\lambda$ and $\lambda\cap\mu$.
\item Assertions (i) and (ii) also hold with $W(C_r)$ replaced by $W(D_r)$.
\end{enumerate}
\end{prop}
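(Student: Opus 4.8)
The plan is to reduce everything to the case of strictly decreasing tuples, so that part (i) does the real work, and then obtain parts (ii) and (iii) by translation and restriction arguments. The key observation is that $W(C_r)$ acts by signed permutations, so conjugacy of two tuples is simply a matter of their multisets of absolute values (counted with multiplicity) being equal, with the parity constraint only entering for $W(D_r)$. For strictly decreasing tuples this multiset determines the tuple up to the action, which is precisely what makes (i) tractable.

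First I would prove (i). Suppose $\lambda$ and $\mu$ are strictly decreasing and $W(C_r)$-conjugate, so $\{|\lambda_1|,\ldots,|\lambda_r|\}=\{|\mu_1|,\ldots,|\mu_r|\}$ as multisets, and (since both are strictly decreasing) all entries are distinct in absolute value except possibly a pair $\{a,-a\}$; in fact strict decrease forbids even that, so the absolute values are genuinely distinct and $\mu$ is obtained from $\lambda$ by flipping the signs on some subset of coordinates and then — crucially — the result is again strictly decreasing only if each flip is "local". The honest way to run the induction: look at the largest absolute value $v$ occurring; it is $\lambda_1$ (since $\lambda$ is decreasing and $v>0$ forces $v$ to be the top entry, as $-v$ could only sit at the bottom and then strict decrease below it fails unless $r=1$), and similarly for $\mu$; hence $\lambda_1=\mu_1$ or $\mu_r=-v$. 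In the first case pass to the tuples with first coordinate deleted and induct; in the second case $\mu_r=-\lambda_1$, but then $(\lambda\cap\mu)_r=\mu_r=-\lambda_1$, and one checks $\lambda\cap\mu$ is still strictly decreasing and has the same absolute-value multiset as $\lambda$ with one sign changed in an admissible position, so it is $W(C_r)$-conjugate to $\lambda$ by a single sign-change reflection; the remaining coordinates are handled by induction on $r$. Assembling these reflections gives the claim.

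For (ii), the standard device is to translate by $\hat\rho$: $\lambda$ and $\mu$ are conjugate under the star action iff $\lambda+\hat\rho$ and $\mu+\hat\rho$ are conjugate under the ordinary action of $W(C_r)$. Since $\hat\rho=\big(-\delta/2,-\delta/2-1,\ldots,-\delta/2-(r-1)\big)$ is strictly decreasing, if $\lambda$ and $\mu$ are merely decreasing then $\lambda+\hat\rho$ and $\mu+\hat\rho$ are strictly decreasing. Apply (i) to conclude that $\lambda+\hat\rho$ and $(\lambda+\hat\rho)\cap(\mu+\hat\rho)$ are $W(C_r)$-conjugate. By the identity $(x+z)\cap(y+z)=(x\cap y)+z$ recorded just before the statement, $(\lambda+\hat\rho)\cap(\mu+\hat\rho)=(\lambda\cap\mu)+\hat\rho$, and translating back gives that $\lambda$ and $\lambda\cap\mu$ are conjugate under the star action. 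Note $\lambda\cap\mu$ is decreasing because $\lambda$ and $\mu$ are, as remarked in the text.

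For (iii), I would observe that the reflections produced in the proof of (i) can be taken to come in an even total number of sign changes, or rather: run the same argument but track parity. When $\lambda$ and $\mu$ are $W(D_r)$-conjugate, the subset of coordinates on which signs differ (comparing the sorted absolute-value matchings) has even size, and the reflections assembled above — each a single sign change composed with a transposition, hence the composite lies in $W(D_r)$ exactly when an even number of sign flips is used — realize the conjugacy inside $W(D_r)$; since $\lambda\cap\mu$ differs from $\lambda$ by flipping signs on a subset of that same even set, it too is $W(D_r)$-conjugate to $\lambda$. The star-action version follows by the same $\hat\rho$-translation as in (ii), using that $W(D_r)$ is a normal subgroup of $W(C_r)$ so the translated statement is literally the ordinary-action statement for $W(D_r)$. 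The main obstacle I anticipate is the bookkeeping in (i): making precise why flipping the sign of a single coordinate of a strictly decreasing tuple, when it yields another strictly decreasing tuple, forces that coordinate to be in a very restricted position (essentially only the top entry can be negated, landing it at the bottom), and checking that $\lambda\cap\mu$ remains strictly decreasing at each stage of the induction — this is elementary but needs care to state cleanly.
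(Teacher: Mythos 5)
Your reductions of (ii) and (iii) to (i) are essentially the ones the paper uses: (ii) by translating by the strictly decreasing vector $\hat\rho$ and using $(x+z)\cap(y+z)=(x\cap y)+z$, and (iii) by a parity count (the paper does this by counting negative entries: for zero-free decreasing tuples $N_-(\lambda\cap\mu)=\max\{N_-(\lambda),N_-(\mu)\}$, which avoids having to track individual reflections). Those parts are fine modulo (i).

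The proof of (i), however, has a genuine gap, and it sits exactly where the real difficulty of the proposition lies. First, two of your preliminary claims are false: a strictly decreasing tuple can perfectly well contain both $a$ and $-a$ (e.g.\ $(2,1,-1,-2)$), so the absolute values need not be distinct; and the entry of largest absolute value need not be $\lambda_1$ (e.g.\ $\lambda=(-1,-2,-3)$, where it is $\lambda_r$). More seriously, the induction in your ``second case'' does not close. Suppose the maximal absolute value $v$ occurs in $\lambda$ only as $\lambda_1=v$ and in $\mu$ only as $\mu_r=-v$. You correctly get $(\lambda\cap\mu)_r=-v$, but the remaining entries of $\lambda\cap\mu$ are $\min(\lambda_i,\mu_i)$ for $1\le i\le r-1$, whereas the inductive hypothesis applied to the truncated tuples $(\lambda_2,\ldots,\lambda_r)$ and $(\mu_1,\ldots,\mu_{r-1})$ (which are the ones with matching absolute-value multisets) would only control $\min(\lambda_{i+1},\mu_i)$ --- the coordinates are misaligned by one, so no smaller instance of the statement is actually being invoked. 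This ``crossing'' configuration ($a$ occurring in $\lambda$ at position $i_0$, $-a$ occurring in $\mu$ at a later position $j_0$) is precisely what the paper's proof has to handle by hand: assuming neither $a$ nor $-a$ occurs in $\lambda\cap\mu$, it builds a recursively defined sequence of indices $\sigma_0,\sigma_1,\ldots$ all lying outside $[i_0,j_0]$ and shows they are pairwise distinct, contradicting finiteness of $\{1,\ldots,r\}$. Nothing in your proposal substitutes for that argument, so (i) --- and with it (ii) and (iii) --- remains unproved as written.
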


\begin{proof}
Clearly (ii) follows from (i) and the second assertion of (iii) follows from the first.

Assume (i) holds and that $\lambda$ and $\mu$ are strictly decreasing. We will show that then the first assertion of (iii) holds. If $\lambda$ contains a zero, then our result follows immediately from (i), so assume that this is not the case. For $x\in\mb R^r$ denote the number of entries $<0$ by $N_{-}(x)$. Then we have for $x,y\in(\mb R\sm\{0\})^r$ that $N_{-}(x\cap y)=\max\{N_{-}(x),N_{-}(y)\}$. Furthermore we have that $x$ and $y$ are conjugate under the action of $W(D_r)$ if and only if they are conjugate under the action of $W(C_r)$ and $N_{-}(x)\equiv N_{-}(y)\ ({\rm mod}\ 2)$. The result now follows.

It remains to prove (i). Assume $\lambda$ and $\mu$ are strictly decreasing and that they are conjugate under the action of $W(C_r)$. By the pigeonhole principle it suffices to show for each real number $a$ the following.\\
1.\ If $a$ and $-a$ occur in $\lambda$ (this includes the case that $a=0$ occurs in $\lambda$), then they occur in $\lambda\cap\mu$.\\
2.\ If $a>0$ and $a$ occurs in $\lambda$, but $-a$ does not occur in $\lambda$, then $a$ or $-a$ occurs in $\lambda\cap\mu$.

Let $a\in\mb R$. If $a$ and $-a$ occur in $\lambda$, then $a$ and $-a$ occur in $\mu$. So for 1 it suffices to show that if $a$ occurs in $\lambda$ and $\mu$, then it occurs in $\lambda\cap\mu$. Let $i$ be the index with $\lambda_i=a$. If $\min\{\lambda_i,\mu_i\}=\lambda_i$, then we are done, so assume $\mu_i<\lambda_i=a$. Then $a$ occurs in $\mu$ before position $i$, so $\mu_j=a$ for some $j<i$. But then $\lambda_j>\lambda_i=a=\mu_j$ and $\min\{\lambda_j,\mu_j\}=a$.

Now assume that $a>0$ and $a$ occurs in $\lambda$, but $-a$ does not occur in $\lambda$. Because of the above we may assume that $a$ does not occur in $\mu$. Then $-a$ occurs in $\mu$. For a contradiction, assume that neither $a$ nor $-a$ occurs in $\lambda\cap\mu$. Let $i_0$ be the index with $\lambda_{i_0}=a$ and let $j_0$ be the index with $\mu_{j_0}=-a$. Then we have $b:=\mu_{i_0}<a$ and $c:=\lambda_{j_0}<-a$. If $j_0\le i_0$, then we would have $-a>c=\lambda_{j_0}\ge\lambda_{i_0}=a$, a contradiction. So $j_0>i_0$. In a picture:

$$\begin{array}{cccccc}
            &      &i_0 &      &j_0   &\\
\lambda\quad&\cdots&a   &\cdots&c     &\cdots\\
            &      &\vee&      &\wedge&      \\
\mu\quad    &\cdots&b   &\cdots&-a    &\cdots
\end{array}$$

\medskip

Now we define recursively a sequence of numbers $\sigma_0,\sigma_1,\sigma_2,\ldots\in\{1,\ldots,r\}$ as follows. $\sigma_0=j_0$. Assume that $k\ge0$ and that $\sigma_k$ is defined. If $\lambda_{\sigma_k}$ occurs in $\mu$, then define $\sigma_{k+1}$ as the index with $\mu_{\sigma_{k+1}}=\lambda_{\sigma_k}$, otherwise we define it as the index with $\mu_{\sigma_{k+1}}=-\lambda_{\sigma_k}$.
To reach a contradiction it is clearly sufficient to show that:\\
(1)\ $\sigma_i<i_0$ or $\sigma_i>j_0$ for all $i>0$ and\\
(2)\ $\sigma_i\ne\sigma_j$ for $i\ne j$.\\
We do this by induction. Let $k\ge0$ and assume that $\sigma_0,\ldots,\sigma_k$ are distinct and satisfy (1). If $\sigma_k\ge j_0$, then $\lambda_{\sigma_k}\le\lambda_{j_0}=c<-a=\mu_{j_0}$ and $-\lambda_{\sigma_k}>a>b=\mu_{i_0}$. If $\sigma_k<i_0$, then $\lambda_{\sigma_k}>\lambda_{i_0}=a>b=\mu_{i_0}$ and $-\lambda_{\sigma_k}<-a=\mu_{j_0}$. So, by the definition of $\sigma_{k+1}$, either $\sigma_{k+1}<i_0$ or $\sigma_{k+1}>j_0$.

Finally we have to show that $\sigma_{k+1}\ne\sigma_j$ for $j\le k$. Assume, again for a contradiction, that $\sigma_{k+1}=\sigma_i$ for some $i\le k$. Because of what we just proved we have $i>0$. We have $\pm\lambda_{\sigma_k}=\mu_{\sigma_{k+1}}=\mu_{\sigma_i}=\pm\lambda_{\sigma_{i-1}}$. Because of the induction hypothesis we have $\sigma_k\ne\sigma_{i-1}$, so $\lambda_{\sigma_k}=-\lambda_{\sigma_{i-1}}$. Since $\mu$ is a signed permutation of $\lambda$, this means that $\lambda_{\sigma_k}$ and $\lambda_{\sigma_{i-1}}$ occur in $\mu$. But then, by the definition of $\mu$, $\mu_{\sigma_{k+1}}=\lambda_{\sigma_k}$ and $\mu_{\sigma_i}=\lambda_{\sigma_{i-1}}$. So $\lambda_{\sigma_k}=\lambda_{\sigma_{i-1}}$, a contradiction.
\end{proof}

\begin{remnn}
We haven't checked whether (i) also holds for not necessarily strictly decreasing $\lambda$ and $\mu$. We certainly don't need this more general result.
\end{remnn}
From now on we assume that $|\delta|+2r<p/2$. For $x\in\mb R^r$, we denote by ${\rm sort}(x)$, the $r$-tuple which is obtained by sorting $x$ in descending order.
\begin{lem}\label{lem.inclusion}
Let $\lambda$ be a partition with $l(\lambda),l(\lambda')\le r$ and let $\alpha=\e_i+\e_j$, $1\le i<j\le r$, be a positive root with $\la\lambda+\hat\rho,\alpha^\vee\ra>0$ and $\schi(s_\alpha\star\lambda)\ne0$. Put $\mu={\rm sort}(s_\alpha(\lambda+\hat\rho))-\hat\rho$. Then all entries of $s_\alpha(\lambda+\hat\rho)$ are distinct and $\mu$ is a partition with $\mu\subsetneqq\lambda$. Let $w$ be the permutation such that $w(s_\alpha(\lambda+\hat\rho))$ is (strictly) decreasing. Then $\mu=w\star s_\alpha\star\lambda$ and $\schi(s_\alpha\star\lambda)={\rm sgn}(w)\schi(\mu)$.
\end{lem}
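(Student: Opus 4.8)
The plan is to work entirely with the tuple $v:=\lambda+\hat\rho\in\frac12\mb Z^r$ and the reflection $\alpha=\e_i+\e_j$, which acts on $v$ by replacing the pair $(v_i,v_j)$ with $(-v_j,-v_i)$, leaving the other entries fixed. Since $\lambda$ is a partition with $l(\lambda),l(\lambda')\le r$, the tuple $v$ is strictly decreasing (as $\hat\rho$ has strictly decreasing entries and $\lambda_i-\lambda_{i+1}\ge0$), so its entries are distinct. First I would record what $\schi(s_\alpha\star\lambda)\ne0$ buys us: by the nonvanishing criterion for $\schi$ recalled in the proof of Theorem~\ref{thm.sumformula} (translated from $\rho$ to $\hat\rho$, i.e.\ applied after shifting back by $up/2$), $\schi$ of a weight $\kappa$ is nonzero iff the entries of $\kappa+\hat\rho$ are all nonzero and pairwise distinct up to sign. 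Applying this with $\kappa=s_\alpha\star\lambda$ gives that the entries of $s_\alpha(v)$ are nonzero and distinct up to sign; but $s_\alpha(v)$ is obtained from $v$ by the pair-swap above, so its multiset of absolute values is $\{|v_1|,\ldots,|v_r|\}$ together with the sign flip on positions $i,j$. Hence "distinct up to sign" for $s_\alpha(v)$ forces the original entries $v_1,\dots,v_r$ to be distinct up to sign as well, and in particular all entries of $s_\alpha(v)$ are genuinely distinct (two entries of equal absolute value would violate the criterion). This proves the first claim.

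Next I would identify $\mu$. By definition $\mu={\rm sort}(s_\alpha(v))-\hat\rho$, so $\mu+\hat\rho={\rm sort}(s_\alpha(v))$ is strictly decreasing, which is exactly the condition $\mu_1\ge\mu_2\ge\cdots$ combined with $\mu$ having integer entries (integrality holds because $s_\alpha(v)$ and $\hat\rho$ differ from $\lambda\in\mb Z^r$ and $\hat\rho$ only by the reflection, and $s_\alpha\star\lambda\in\mb Z^r$ as $\mb Z^r$ is star-stable; sorting permutes entries, preserving $\mb Z^r$). One must also check $\mu_r\ge0$: this is where $\schi(s_\alpha\star\lambda)\ne0$ enters again together with the size constraint $|\delta|+2r<p/2$ — the argument is the one already carried out in Lemma~\ref{lem.e_i+e_j} (after the substitution $(i,j)\mapsto(j',i')$ converting $C_r$-indices to $C_m$-indices), which shows all entries of $s_\alpha(\lambda+\rho)$, hence of $s_\alpha(v)$, are strictly positive; so after sorting, $\mu+\hat\rho$ has strictly positive entries and $\mu_r\ge\mu_r+\hat\rho_r-(\text{largest }\hat\rho)\ge\ldots\ge0$, more cleanly: $\mu+\hat\rho$ strictly positive and strictly decreasing forces $\mu$ to be a partition since $\hat\rho$ is an arithmetic progression with common difference $-1$. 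So $\mu$ is a partition.

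For $\mu\subsetneqq\lambda$: the multiset $\{v_i+v_j':\text{reorderings}\}$ is unchanged by $s_\alpha$ only up to sign, but the key point is that sorting $s_\alpha(v)$ in decreasing order and comparing with $v$ (also decreasing) is exactly the setup of Proposition~\ref{prop.Wmin}(i)/(iii) applied to $v$ and $s_\alpha(v)$, which are $W(C_r)$-conjugate strictly decreasing tuples — actually I would instead argue directly: $s_\alpha$ sends $(v_i,v_j)\mapsto(-v_j,-v_i)$ with $v_i>v_j>0$ (by positivity and strict decrease), so $-v_j<0<v_i$ and $-v_i<v_j$; thus the new entries at positions $i,j$ are both $\le$ the old ones, and after sorting, coordinatewise we only decrease (this is a standard rearrangement fact: sorting a tuple that is coordinatewise $\le v$ stays coordinatewise $\le$ the sort of $v$, which is $v$ itself). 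Strictness follows since $\la\lambda+\hat\rho,\alpha^\vee\ra=v_i+v_j>0$ means the swap genuinely lowers position $i$. Finally, writing $w$ for the (unique, by distinctness) permutation with $w(s_\alpha(v))$ decreasing, we get $w(s_\alpha(v))=\mu+\hat\rho$, i.e.\ $\mu=w\star(s_\alpha\star\lambda)$; and since $\schi(w\cdot\kappa)={\rm sgn}(w)\schi(\kappa)$ for the dot action of the Weyl group on characters (here with $\hat\rho$ in place of $\rho$, via \cite[II.5.9]{Jan} after the global shift), we conclude $\schi(s_\alpha\star\lambda)={\rm sgn}(w)\schi(\mu)$. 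The main obstacle is bookkeeping: keeping straight the two index conventions ($C_r$ versus the $C_m$-convention $i\mapsto m+1-i$ used in the proof of Theorem~\ref{thm.sumformula}) and the translation between $\rho$-shifted and $\hat\rho$-shifted statements of the $\schi$-nonvanishing criterion and the sign rule; once those dictionaries are fixed, every step reduces to an elementary rearrangement argument already appearing in Lemma~\ref{lem.e_i+e_j} or Proposition~\ref{prop.Wmin}.
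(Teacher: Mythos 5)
Your overall route is the same as the paper's: coordinatewise comparison of $s_\alpha(\lambda+\hat\rho)$ with $\lambda+\hat\rho$ to get $\mu\subseteqq\lambda$, Lemma~\ref{lem.e_i+e_j} for distinctness and positivity, and \cite[II.5.9(1)]{Jan} together with the constancy of $\rho-\hat\rho$ on the first $r$ coordinates for the sign formula. The inclusion argument and the identification $\mu=w\star s_\alpha\star\lambda$ are fine (for $\mu\ne\lambda$ the cleanest justification is simply $|\mu|=|s_\alpha\star\lambda|=|\lambda|-2\la\lambda+\hat\rho,\alpha^\vee\ra<|\lambda|$, which is what the paper uses).

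There is, however, a genuine gap in your argument that $\mu$ is a partition, i.e.\ that $\mu_r\ge0$. You assert that Lemma~\ref{lem.e_i+e_j} shows the entries of $s_\alpha(\lambda+\rho)$, ``hence of $s_\alpha(v)$'', are strictly positive. That ``hence'' is false: the two tuples differ by the constant $\frac{u}{2}p$ on the first $r$ coordinates, so positivity of the $\rho$-shifted $m$-tuple only gives that the entries of $s_\alpha(\lambda+\hat\rho)$ exceed $-\frac{u}{2}p$. Moreover your fallback principle --- that $\mu+\hat\rho$ strictly positive and strictly decreasing forces $\mu$ to be a partition --- is also false whenever $\hat\rho_r=-\frac{\delta}{2}-(r-1)>0$ (e.g.\ $\delta$ sufficiently negative): take $\hat\rho=(5,4,3)$ and $\mu+\hat\rho=(10,9,1)$, so $\mu=(5,5,-2)$. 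What is actually needed, and what the paper extracts, is the sharper information that the first $r$ entries of $s_{\alpha,up}(\lambda+\rho)$ are distinct positive \emph{integers} while the last $m-r$ entries form the set $\{1,\ldots,m-r\}$; distinctness then forces each of the first $r$ entries to be $\ge m-r+1$, so sorting the $m$-tuple only permutes the first $r$ entries and ${\rm sort}(s_{\alpha,up}(\lambda+\rho))=\mu+\rho$ with $\mu$ a partition of length $\le r$. Equivalently, every entry of $s_\alpha(\lambda+\hat\rho)$ is $\ge\hat\rho_r$, which is exactly $\mu_r\ge0$. Your insistence on working only with $r$-tuples and $\hat\rho$ loses precisely this comparison with the tail $\{1,\ldots,m-r\}$; the same loss makes your ``translated'' nonvanishing criterion for $\schi$ in terms of $\kappa+\hat\rho$ incorrect as an equivalence (though the implication you actually use, that nonvanishing forces the entries of $s_\alpha(\lambda+\hat\rho)$ to be distinct, does follow correctly from Lemma~\ref{lem.e_i+e_j}).
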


\begin{proof}
We have $s_\alpha\star\lambda=s_{\alpha,up}\cdot\lambda$. Since $\la\lambda+\hat\rho,\alpha^\vee\ra>0$, we have that $s_\alpha(\lambda+\hat\rho)\subseteq\lambda+\hat\rho$ and the same holds for the $r$-tuple obtained from $s_\alpha(\lambda+\hat\rho)$ by sorting it in descending order. So $\mu\subseteq\lambda$. We have $|\mu|=|s_{\alpha}\star\lambda|<|\lambda|$, so $\mu\ne\lambda$. Recall that the final $m-r$ entries of $s_{\alpha,lp}(\lambda+\rho)$ form the reversed interval $\{m-r,\ldots,2,1\}$. Furthermore, the entries of $s_{\alpha,lp}(\lambda+\rho)=s_\alpha(\lambda+\hat\rho)+\rho-\hat\rho$ are distinct and strictly positive by Lemma~\ref{lem.e_i+e_j} in the proof of Theorem~\ref{thm.sumformula}.
So sorting them in descending order only involves the first $r$ entries. The result now follows from \cite[II.5.9(1)]{Jan} and the fact that $\rho-\hat\rho$ has the same value $\frac{u}{2}p$ on the first $r$ entries.
\end{proof}

We can now deduce from Theorem~\ref{thm.sumformula} a linkage principle for the symplectic Schur algebra.
\begin{lem}\label{lem.linkage}
Let $\lambda$ and $\mu$ be partitions with $l(\lambda),l(\lambda'),l(\mu),l(\mu')\le r$.
\begin{enumerate}[{\rm(i)}]
\item If $[\Delta_0(\lambda):L_0(\mu)]\ne0$, then $\mu\subseteq\lambda$ and $\lambda$ and $\mu$ are conjugate under the star action of $W(D_r)$.
\item If $\lambda,\mu\in\Lambda^+_0(m,r)$ are in the same block of $S_0(n,r)$, then they are conjugate under the star action of $W(D_r)$.
\end{enumerate}
\end{lem}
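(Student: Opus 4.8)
The plan is to establish (i) by induction on $|\lambda|$, using the strengthened Jantzen sum formula \eqref{eq.sumformula} together with Lemma~\ref{lem.inclusion}, and then to read off (ii) formally.

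First I would dispose of the case $u=0$: then $\delta=-n$, so $n+2r=|\delta|+2r<p/2<p$ and $\Delta_0(\lambda)$ is irreducible by the first part of Theorem~\ref{thm.sumformula}, making (i) trivial. So assume $u\ne0$ (hence $m>r$ by Theorem~\ref{thm.sumformula}). The base case $\lambda=\emptyset$ is clear since $\Delta_0(\emptyset)=k$. For the inductive step, assume (i) for all partitions $\nu$ with $l(\nu),l(\nu')\le r$ and $|\nu|<|\lambda|$, and suppose $[\Delta_0(\lambda):L_0(\mu)]\ne0$ with $\mu\ne\lambda$. Since $\Delta_0(\lambda)/\Delta_0(\lambda)^1\cong L_0(\lambda)$ we have $[\Delta_0(\lambda)^1:L_0(\mu)]=[\Delta_0(\lambda):L_0(\mu)]>0$, so $L_0(\mu)$ occurs with positive multiplicity in $\bigoplus_{i>0}\Delta_0(\lambda)^i$, that is, in the left hand side of \eqref{eq.sumformula}. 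On the right hand side, Lemma~\ref{lem.inclusion} shows that each nonzero summand $\schi(s_\alpha\star\lambda)$ equals $\pm\schi(\nu_\alpha)$, where $\nu_\alpha={\rm sort}(s_\alpha(\lambda+\hat\rho))-\hat\rho$ is a partition with $\nu_\alpha\subsetneqq\lambda$ and $\nu_\alpha=(w_\alpha s_\alpha)\star\lambda$ for a permutation $w_\alpha$. Since the reflection $s_\alpha=s_{\e_i+\e_j}$ interchanges coordinates $i$ and $j$ and negates both, it involves two sign changes and so lies in $W(D_r)$; hence $w_\alpha s_\alpha\in W(D_r)$ and $\nu_\alpha$ is conjugate to $\lambda$ under the star action of $W(D_r)$. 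Collecting terms, the right hand side of \eqref{eq.sumformula} is a $\mb Z$-linear combination of Weyl characters $\schi(\nu)$ in which every $\nu$ that occurs is a partition with $\nu\subsetneqq\lambda$ (in particular $|\nu|<|\lambda|$ and $l(\nu),l(\nu')\le r$) which is $W(D_r)$-star-conjugate to $\lambda$.

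Expanding both sides of \eqref{eq.sumformula} as $\mb Z$-linear combinations of the $\ch L_0(\kappa)$ and using $\ch\nabla_0(\nu)=\sum_\kappa[\nabla_0(\nu):L_0(\kappa)]\ch L_0(\kappa)$, the positivity of the $L_0(\mu)$-coefficient on the left forces some $\nu$ occurring on the right to satisfy $[\nabla_0(\nu):L_0(\mu)]\ne0$. Since $\Delta_0(\nu)$ and $\nabla_0(\nu)$ have the same composition factors, $[\Delta_0(\nu):L_0(\mu)]\ne0$, and as $|\nu|<|\lambda|$ the induction hypothesis gives $\mu\subseteq\nu$ and $\mu$ $W(D_r)$-star-conjugate to $\nu$. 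Combined with $\nu\subsetneqq\lambda$ and $\nu$ $W(D_r)$-star-conjugate to $\lambda$, this yields $\mu\subseteq\lambda$ and $\mu$ $W(D_r)$-star-conjugate to $\lambda$, proving (i). For (ii): $S_0(n,r)$ is quasi-hereditary and, as just used, $\Delta_0(\lambda)$ and $\nabla_0(\lambda)$ have the same composition factors, so the block relation on $\Lambda_0^+(m,r)$ is the equivalence relation generated by the pairs $(\lambda,\mu)$ with $[\Delta_0(\lambda):L_0(\mu)]\ne0$; by (i) every such pair lies in a single orbit of the star action of $W(D_r)$, and since orbit-equivalence is itself an equivalence relation it contains the block relation, which is exactly (ii).

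I expect the main obstacle to be the bookkeeping in the inductive step, not any single estimate: all the delicate root-system combinatorics has already been done in Theorem~\ref{thm.sumformula} and Lemma~\ref{lem.inclusion}. The subtle point is that \eqref{eq.sumformula} controls only which $\nabla_0(\nu)$ appear on its right hand side, and possible sign cancellations among the $\schi(s_\alpha\star\lambda)$ prevent one from reading off composition multiplicities directly; however, cancellation can only delete characters, so every $\nu$ that survives is still one of the $W(D_r)$-conjugates of $\lambda$ supplied by Lemma~\ref{lem.inclusion}, while the deeper composition factors of each $\nabla_0(\nu)$ are caught by applying the induction hypothesis to the strictly smaller partition $\nu$.
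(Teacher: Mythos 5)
Your proposal is correct and follows essentially the same route as the paper: reduce to $u\ne0$, use the strengthened sum formula \eqref{eq.sumformula} together with Lemma~\ref{lem.inclusion} to see that $\ch L_0(\mu)$ must occur in some $\schi(\nu)$ with $\nu\subsetneqq\lambda$ and $\nu\in W(D_r)\star\lambda$, and conclude by induction (the paper inducts on $\subseteq$ rather than on $|\lambda|$, an immaterial difference). Your extra remarks --- that possible cancellations only delete characters, and that (ii) follows because the block relation is generated by $[\Delta_0(\lambda):L_0(\mu)]\ne0$ --- are correct elaborations of steps the paper leaves implicit.
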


\begin{proof}
Assertion (ii) follows immediately from (i). We will show (i) by induction on $\lambda$ with respect to the ordering $\subseteq$. If $\mu=\lambda$, then there is nothing to prove. So assume that $\mu\ne\lambda$. Then $\Delta_0(\lambda)$ is not irreducible and we must have $u\ne0$. Furthermore, $L_0(\mu)$ is a composition factor of $\bigoplus_{i>0}\Delta_0(\lambda)^i$. Since the ${\rm ch}\,L_0(\nu)$'s form a basis of $(\mb ZX_0)^{W(C_m)}$, we must have that ${\rm ch}\,L_0(\mu)$ occurs in some $\schi(s_\alpha\star\lambda)$ occurring on the right in \eqref{eq.sumformula}. By Lemma~\ref{lem.inclusion} we have that ${\rm ch}\,L_0(\mu)$ occurs in $\schi(\nu)$ for some partition $\nu$ with $\nu\subsetneqq\lambda$ and $\nu\in W(D_r)\star\lambda$. We can now finish by applying the induction hypothesis.
\end{proof}

The next lemma shows that there are no repetitions or cancelations in the sum on the right in \eqref{eq.sumformula}.

\begin{lem}\label{lem.nocancellations}
Let $\lambda$ be a partition with $l(\lambda),l(\lambda')\le r$ and let $\alpha=\e_i+\e_j$ and $\beta=\e_k+\e_l$, $1\le i<j\le r$ and $1\le k<l\le r$, be positive roots with $\la\lambda+\hat\rho,\alpha^\vee\ra>0$, $\la\lambda+\hat\rho,\beta^\vee\ra>0$, $\schi(s_\alpha\star\lambda)\ne0$ and $\schi(s_\beta\star\lambda)\ne0$. If $\alpha\ne\beta$, then $\schi(s_\alpha\star\lambda)\ne\pm\schi(s_\beta\star\lambda)$.
\end{lem}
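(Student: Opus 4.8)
The plan is to reduce the claim to a combinatorial injectivity statement. Call a positive root $\alpha=\e_i+\e_j$ with $1\le i<j\le r$ \emph{admissible} if $\la\lambda+\hat\rho,\alpha^\vee\ra>0$ and $\schi(s_\alpha\star\lambda)\ne0$, as in the statement. By Lemma~\ref{lem.inclusion}, every admissible $\alpha$ yields a partition $\mu_\alpha\subsetneqq\lambda$ with $\schi(s_\alpha\star\lambda)=\pm\schi(\mu_\alpha)$, and in particular $\schi(\mu_\alpha)\ne0$. Since the $\schi(\mu)$ with $\mu$ dominant form a $\mb Z$-basis of $(\mb Z X_0)^{W(C_m)}$, a relation $\schi(s_\alpha\star\lambda)=\pm\schi(s_\beta\star\lambda)$ would force $\schi(\mu_\alpha)=\pm\schi(\mu_\beta)$; the sign $-$ is impossible (a sum of two basis elements is never $0$), so it would force $\mu_\alpha=\mu_\beta$. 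Hence it suffices to show that $\alpha\mapsto\mu_\alpha$ is injective on admissible roots.

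To make $\mu_\alpha$ explicit, put $x=\lambda+\hat\rho\in\mb R^r$; since $\lambda$ is a partition, $x$ is strictly decreasing, so its entries form an $r$-element set $S$. For $\alpha=\e_i+\e_j$ with $i<j$ we have $\alpha^\vee=\e_i+\e_j$, and $s_\alpha(x)$ agrees with $x$ except that its $i$-th entry is $-x_j$ and its $j$-th entry is $-x_i$; thus as a multiset $s_\alpha(x)=(S\setminus\{x_i,x_j\})\cup\{-x_i,-x_j\}$, and $\mu_\alpha+\hat\rho={\rm sort}(s_\alpha(x))$ by Lemma~\ref{lem.inclusion}. So $\mu_\alpha=\mu_\beta$ precisely when $s_\alpha(x)$ and $s_\beta(x)$ agree as multisets.

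Two elementary observations about a single admissible $\alpha=\e_i+\e_j$ drive the rest. First, $x_i>0$: indeed $x_i+x_j=\la\lambda+\hat\rho,\alpha^\vee\ra>0$ while $x_i>x_j$. Second, $-x_i\notin S$, and also $-x_j\notin S$ unless $x_j=0$. This uses that the entries of $s_\alpha(x)$ are pairwise distinct (Lemma~\ref{lem.inclusion}): if $-x_i=x_m$ or $-x_j=x_m$ for some $m$, then necessarily $m\notin\{i,j\}$ — the cases $m\in\{i,j\}$ being ruled out by $x_i>0$ and $x_i+x_j>0$, apart from $-x_j=x_j=0$ — and then the value in question would occur both at position $m$ and at position $j$ (resp.\ $i$) of $s_\alpha(x)$, a contradiction. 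Hence, if $x_j\ne0$ then $s_\alpha(x)$ is obtained from $S$ by removing the distinct nonzero elements $x_i,x_j$ and adjoining the two new elements $-x_i,-x_j$, and $x_i,x_j,-x_i,-x_j$ are pairwise distinct; while if $x_j=0$ then $s_\alpha(x)=(S\setminus\{x_i\})\cup\{-x_i\}$ with $-x_i\notin S$.

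Comparing $\alpha=\e_i+\e_j$ with $\beta=\e_k+\e_l$ then finishes the argument. If exactly one of $x_j,x_l$ vanishes, then $s_\alpha(x)\setminus S$ and $s_\beta(x)\setminus S$ have different sizes, so the two multisets differ and $\mu_\alpha\ne\mu_\beta$. If neither vanishes, then $S\setminus s_\alpha(x)=\{x_i,x_j\}$ and $S\setminus s_\beta(x)=\{x_k,x_l\}$, so $\mu_\alpha=\mu_\beta$ would give $\{x_i,x_j\}=\{x_k,x_l\}$, hence $\{i,j\}=\{k,l\}$ (the entries of $x$ being distinct), i.e.\ $\alpha=\beta$. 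If both vanish, then $j$ and $l$ both equal the unique index at which $x$ vanishes, so $j=l$; moreover $S\setminus s_\alpha(x)=\{x_i\}$ and $S\setminus s_\beta(x)=\{x_k\}$, so $\mu_\alpha=\mu_\beta$ forces $x_i=x_k$ and hence $i=k$, i.e.\ $\alpha=\beta$. In every case $\alpha\ne\beta$ implies $\mu_\alpha\ne\mu_\beta$, which proves the lemma. I expect the one genuinely delicate step to be the second observation above: one must keep careful track of what $s_\alpha$ does at the (at most one) zero entry of $x$ and exclude the degenerate coincidences $-x_i=x_i$ and $x_i+x_j=0$; everything after that is routine bookkeeping.
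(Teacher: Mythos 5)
Your proof is correct, and the opening reduction (pass from $\schi(s_\alpha\star\lambda)=\pm\schi(s_\beta\star\lambda)$ to $\mu_\alpha=\mu_\beta$ via Lemma~\ref{lem.inclusion} and the fact that the Weyl characters of dominant weights form a $\mb Z$-basis) is exactly what the paper does implicitly. Where you diverge is in how you then show $\alpha\mapsto\mu_\alpha$ is injective. The paper argues in two cases: if $i\ne k$, say $i<k$, it compares the $i^{\rm th}$ entries of the sorted tuples (for $s_\beta$ this entry is still $(\lambda+\hat\rho)_i$, for $s_\alpha$ it has strictly dropped); if $i=k$ and $j\ne l$, it observes that $|s_\gamma\star\lambda|=|\lambda|-2\la\lambda+\hat\rho,\gamma^\vee\ra$ for a short root $\gamma$, so $\mu_\alpha$ and $\mu_\beta$ are partitions of different integers. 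You instead recover the unordered pair $\{(\lambda+\hat\rho)_i,(\lambda+\hat\rho)_j\}$ from the multiset of entries of $s_\alpha(\lambda+\hat\rho)$ by intersecting with the original entry set $S$. Your route is more uniform (a single mechanism rather than two unrelated cases) and makes visible exactly which data of $\alpha$ the character $\schi(s_\alpha\star\lambda)$ remembers; the price is the careful bookkeeping around the possible zero entry of $\lambda+\hat\rho$ (the coincidence $-x_j=x_j=0$), which you correctly isolate and which the paper's argument sidesteps entirely since neither sorted-position comparison nor the coordinate-sum count is sensitive to it. Both arguments rest on the same input from Lemma~\ref{lem.e_i+e_j}/Lemma~\ref{lem.inclusion}, namely that the entries of $s_\alpha(\lambda+\hat\rho)$ are pairwise distinct.
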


\begin{proof}
Put $\mu_\alpha={\rm sort}(s_\alpha(\lambda+\hat\rho))-\hat\rho$ and $\mu_\beta={\rm sort}(s_\beta(\lambda+\hat\rho))-\hat\rho$. First assume that $i\ne k$, say $i<k$. The we have that the $i^{\rm th}$ value of ${\rm sort}(s_\beta(\lambda+\hat\rho))$ is $(\lambda+\hat\rho)_i$, but the $i^{\rm th}$ value of ${\rm sort}(s_\alpha(\lambda+\hat\rho))$ is strictly less than this value. So $\mu_\alpha\ne\mu_\beta$.

Now assume that $i=k$ and that $j\ne l$. We have $s_\gamma\star x=x-\la x+\hat\rho,\gamma^\vee\ra\gamma$.
So for the coordinate sum $|x|$ of $x$ we have $|s_\gamma\star x|=|x|-2\la x+\hat\rho,\gamma^\vee\ra$ if $\gamma$ is a short root. Now $\la\lambda+\hat\rho,\alpha^\vee\ra\ne\la\lambda+\hat\rho,\beta^\vee\ra$ and we always have $|{\rm sort}(x)|=|x|$, so $\mu_\alpha$ and $\mu_\beta$ are partitions of different numbers.
\end{proof}

\begin{cornn}
Let $\lambda$ be a partition with $l(\lambda),l(\lambda')\le r$ and let $\Lambda_\lambda$ be the set of partitions $\nu$ such that $\nu={\rm sort}(s_\alpha(\lambda+\hat\rho))-\hat\rho$ for some positive root $\alpha=\e_i+\e_j$ with $\la\lambda+\hat\rho,\alpha^\vee\ra>0$ and $\schi(s_\alpha\star\lambda)\ne0$. Assume that $\Lambda_\lambda\ne\emptyset$ and let $\mu$ be an $\subseteq$-maximal element of $\Lambda_\lambda$. Then $[\Delta_0(\lambda):L_0(\mu)]\ne0$.
\end{cornn}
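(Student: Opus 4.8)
The plan is to combine the strengthened sum formula \eqref{eq.sumformula} with Lemmas~\ref{lem.inclusion} and~\ref{lem.nocancellations} to rewrite $\sum_{i>0}{\rm ch}\,\Delta_0(\lambda)^i$ as a cancellation-free $\mb Z$-combination of the Weyl characters $\schi(\nu)$, $\nu\in\Lambda_\lambda$, and then to extract the multiplicity of $L_0(\mu)$ using the linkage statement Lemma~\ref{lem.linkage}(i) together with the $\subseteq$-maximality of $\mu$. First I would note that $\Lambda_\lambda\neq\emptyset$ forces $u\neq0$: when $u=0$ one has $|\delta|=n$ (so $n+2r<p$ and every $\Delta_0(\lambda)$ is irreducible) and, moreover, $\hat\rho$ agrees with $\rho$ on the first $r$ coordinates; then for $\alpha=\e_i+\e_j$ with $1\le i<j\le r$ and $\la\lambda+\hat\rho,\alpha^\vee\ra>0$ the minimum coordinate of $s_\alpha(\lambda+\hat\rho)$ is $-(\lambda+\hat\rho)_i\le-(m-r+1)$, so the last coordinate of ${\rm sort}(s_\alpha(\lambda+\hat\rho))-\hat\rho$ is $\le-2$ and no such tuple is a partition, whence $\Lambda_\lambda=\emptyset$. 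So $u\neq0$, the formula \eqref{eq.sumformula} applies, and $\nu_p(up)\ge1$.

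For each root $\alpha=\e_i+\e_j$, $1\le i<j\le r$, occurring in \eqref{eq.sumformula} with $\schi(s_\alpha\star\lambda)\neq0$, Lemma~\ref{lem.inclusion} produces a partition $\nu_\alpha:={\rm sort}(s_\alpha(\lambda+\hat\rho))-\hat\rho\in\Lambda_\lambda$ with $\nu_\alpha\subsetneq\lambda$ and a permutation $w_\alpha$ with $\schi(s_\alpha\star\lambda)={\rm sgn}(w_\alpha)\,\schi(\nu_\alpha)$. By the very definition of $\Lambda_\lambda$ the assignment $\alpha\mapsto\nu_\alpha$ maps onto $\Lambda_\lambda$, and by Lemma~\ref{lem.nocancellations} distinct such $\alpha$ give $\schi(\nu_\alpha)$ that do not agree up to sign, hence distinct $\nu_\alpha$; so $\alpha\mapsto\nu_\alpha$ is a bijection onto $\Lambda_\lambda$, and \eqref{eq.sumformula} becomes
\begin{equation*}
\sum_{i>0}{\rm ch}\,\Delta_0(\lambda)^i=\nu_p(up)\sum_{\nu\in\Lambda_\lambda}\eta_\nu\,\schi(\nu),\qquad\eta_\nu\in\{\pm1\}.
\end{equation*}

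Finally I would expand each $\schi(\nu)=\sum_\kappa d_{\nu\kappa}\,{\rm ch}\,L_0(\kappa)$, where $d_{\nu\kappa}$ is the multiplicity of $L_0(\kappa)$ in $\nabla_0(\nu)$ (equivalently in $\Delta_0(\nu)$, since these have the same character). Every $\nu\in\Lambda_\lambda$ satisfies $\nu\subseteq\lambda$, hence $l(\nu)\le l(\lambda)\le r$ and $l(\nu')=\nu_1\le\lambda_1=l(\lambda')\le r$, so Lemma~\ref{lem.linkage}(i) gives $d_{\nu\kappa}=0$ unless $\kappa\subseteq\nu$, and $d_{\nu\nu}=1$. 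Since the ${\rm ch}\,L_0(\kappa)$ are linearly independent, I compare the coefficient of ${\rm ch}\,L_0(\mu)$ on both sides of the displayed identity: on the right it equals $\nu_p(up)\sum_{\nu\in\Lambda_\lambda,\,\mu\subseteq\nu}\eta_\nu d_{\nu\mu}$, and since $\mu$ is $\subseteq$-maximal in $\Lambda_\lambda$ the only contributing $\nu$ is $\nu=\mu$, giving $\nu_p(up)\,\eta_\mu\neq0$. On the left it equals $\sum_{i>0}[\Delta_0(\lambda)^i:L_0(\mu)]$, a sum of non-negative integers; being nonzero it is positive, and since $\Delta_0(\lambda)^i\subseteq\Delta_0(\lambda)^1$ for $i\ge1$ this forces $[\Delta_0(\lambda)^1:L_0(\mu)]\neq0$. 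As $\mu\neq\lambda$ we have $[\Delta_0(\lambda)/\Delta_0(\lambda)^1:L_0(\mu)]=[L_0(\lambda):L_0(\mu)]=0$, whence $[\Delta_0(\lambda):L_0(\mu)]=[\Delta_0(\lambda)^1:L_0(\mu)]\neq0$, as required.

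The main obstacle is the middle step: one must carefully marry Lemmas~\ref{lem.inclusion} and~\ref{lem.nocancellations} to see that the right-hand side of \eqref{eq.sumformula} is genuinely cancellation-free and is naturally indexed by $\Lambda_\lambda$, and then check that the $\subseteq$-maximality of $\mu$ isolates exactly one surviving contribution to ${\rm ch}\,L_0(\mu)$, with no copy of $L_0(\mu)$ leaking in from a larger $\nu\in\Lambda_\lambda$. The final conversion of a nonzero Weyl-character coefficient into a true composition-factor statement is then the routine positivity argument.
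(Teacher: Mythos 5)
Your proof is correct and follows essentially the same route as the paper's: use Lemma~\ref{lem.linkage}(i) plus the $\subseteq$-maximality of $\mu$ to see that only $\pm\schi(\mu)$ can contribute ${\rm ch}\,L_0(\mu)$ to the right-hand side of \eqref{eq.sumformula}, invoke Lemma~\ref{lem.nocancellations} to ensure that contribution is genuinely nonzero, and conclude by positivity of the left-hand side. Your extra checks (that $\Lambda_\lambda\ne\emptyset$ forces $u\ne0$, and that the roots biject onto $\Lambda_\lambda$) are correct elaborations of what the paper leaves implicit.
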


\begin{proof}
If ${\rm ch}(L_0(\mu))$ occurs in $\schi(\nu)$ for some $\nu\in\Lambda_\lambda$, then $\mu\subseteq\nu$ by Lemma~\ref{lem.linkage} and $\nu=\mu$ by the maximality of $\mu$. So in the sum in \eqref{eq.sumformula} only $\pm\schi(\mu)$ contains ${\rm ch}(L_0(\mu))$ and $\schi(\mu)$ must appear with positive coefficient.
\end{proof}

\begin{lem}\label{lem.chi}
Let $\lambda$ be a partition with $l(\lambda),l(\lambda')\le r$ and let $\alpha=\e_i+\e_j$, $1\le i<j\le r$, be a positive root with $\la\lambda+\hat\rho,\alpha^\vee\ra>0$. Then $\schi(s_\alpha\star\lambda)\ne0$ if and only if $(\lambda+\hat\rho)_i,(\lambda+\hat\rho)_j<\frac{\delta}{2}+r$ and $(\lambda+\hat\rho)_k\ne-(\lambda+\hat\rho)_i$ and $(\lambda+\hat\rho)_k\ne-(\lambda+\hat\rho)_j$ for all $k\in\{1,\ldots,r\}\sm\{i,j\}$.
\end{lem}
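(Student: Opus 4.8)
The plan is to test everything against the elementary non-vanishing criterion for Weyl characters recalled in the proof of Theorem~\ref{thm.sumformula}: for $\nu\in\mb Z^m$, $\schi(\nu)\ne0$ exactly when the entries of $\nu+\rho$ are all nonzero and pairwise distinct in absolute value. Accordingly the first step is to write out $q:=s_\alpha\star\lambda+\rho$. Set $h_k=(\lambda+\hat\rho)_k$ for $1\le k\le r$; since $\lambda$ is a partition and $\hat\rho$ is strictly decreasing, $h_1>h_2>\cdots>h_r$. From $s_\alpha\star\lambda=\lambda-\la\lambda+\hat\rho,\alpha^\vee\ra\alpha$ with $\la\lambda+\hat\rho,\alpha^\vee\ra=h_i+h_j$, together with $(\rho-\hat\rho)_k=\tfrac{u}{2}p$ for $k\le r$ (a consequence of $\delta=up-2m$) and $(\lambda+\rho)_k=m+1-k$ for $k>r$, one finds $q_i=\tfrac{u}{2}p-h_j$, $q_j=\tfrac{u}{2}p-h_i$, and $q_k=(\lambda+\rho)_k$ for $k\notin\{i,j\}$; in particular $q\in\mb Z^m$. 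Three remarks then organize the bookkeeping: the $q_k$ with $k\notin\{i,j\}$ are exactly the entries of $\lambda+\rho$ in positions $\ne i,j$, hence pairwise distinct and positive, those in positions $>r$ being precisely $1,\dots,m-r$; $h_1>\cdots>h_r$ gives $q_j<q_i$; and $\tfrac{\delta}{2}+r=\tfrac{u}{2}p-(m-r)$, so the asserted inequalities $h_i,h_j<\tfrac{\delta}{2}+r$ say exactly that $q_i,q_j>m-r$.

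The "if" direction is then routine: if $q_i,q_j>m-r\ge0$, all entries of $q$ are positive integers, hence nonzero; $q_i\ne\pm q_j$ because $0<q_j<q_i$; for $k>r$ one has $q_k=m+1-k\le m-r<q_i,q_j$, disposing of those positions; and for $k\le r$, $k\ne i,j$, positivity rules out $q_k=-q_i,-q_j$, while $q_k=q_i\iff h_k=-h_j$ and $q_k=q_j\iff h_k=-h_i$ are precisely the coincidences excluded by hypothesis — so all entries of $q$ are nonzero and pairwise distinct in absolute value, i.e.\ $\schi(s_\alpha\star\lambda)\ne0$. Conversely, if $\schi(s_\alpha\star\lambda)\ne0$ then the same two equivalences make $h_k=-h_i$ and $h_k=-h_j$ impossible for $k\le r$, $k\ne i,j$, which is the second asserted condition; and since $1,\dots,m-r$ already occur among the entries of $q$ (at positions $>r$), distinctness in absolute value forces $|q_k|\ge m-r+1$ for every $k\le r$, in particular $|q_i|,|q_j|>m-r$.

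The only genuinely non-formal point — and the step I expect to be the main obstacle — is upgrading $|q_i|,|q_j|>m-r$ to $q_i,q_j>m-r$, i.e.\ ruling out $q_i<0$ or $q_j<0$. For this I would use the standing hypothesis $|\delta|+2r<p/2$ (so $4r<p$) together with $l(\lambda')=\lambda_1\le r$. First, $n>0$ gives $u\ge0$, and we may assume $u\ge1$: if $u=0$ then $n+2r<p$, so by Theorem~\ref{thm.sumformula} $\Delta_0(\lambda)$ is irreducible and \eqref{eq.sumformula} is not in force. Now suppose $q_i<0$; then $h_j=\tfrac{u}{2}p-q_i=\tfrac{u}{2}p+|q_i|\ge\tfrac{u}{2}p+m-r+1$, whereas writing $h_j$ out directly gives $h_j=\lambda_j+m-j+1-\tfrac{u}{2}p\le r+m-\tfrac{u}{2}p$ (using $\lambda_j\le\lambda_1\le r$, $j\ge1$). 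Comparing the two bounds yields $up\le 2r-1<4r<p\le up$, a contradiction; the case $q_j<0$ is symmetric with $h_i,\lambda_i$ replacing $h_j,\lambda_j$. Hence $q_i,q_j>0$, so $q_i=|q_i|>m-r$ and $q_j=|q_j|>m-r$, which is the first asserted condition. (Alternatively, the strict positivity of every entry of $q$ is exactly the extra conclusion of Lemma~\ref{lem.e_i+e_j} applied with $l=u\ge1$ — whose hypotheses $\la\lambda+\hat\rho,\alpha^\vee\ra>0$ and $l>0$ hold — so one may simply invoke that lemma in place of this computation.)
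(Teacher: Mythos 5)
Your proof is correct and takes essentially the same route as the paper: both reduce to the elementary non-vanishing criterion applied to $s_{\alpha,up}(\lambda+\rho)=s_\alpha\star\lambda+\rho$ and translate back through $\rho-\hat\rho=\frac{u}{2}p$ on the first $r$ coordinates, the only real difference being that where you prove $q_i,q_j>0$ by a direct estimate ($up\le 2r-1<p$), the paper simply quotes the final assertion of Lemma~\ref{lem.e_i+e_j}, exactly as your closing parenthesis anticipates. Be aware, though, that your reduction to $u\ge1$ is not a proof of the $u=0$ case but rather the recognition that $u\ne0$ is a genuinely necessary implicit hypothesis: for $u=0$ one has $s_\alpha\star\lambda=s_\alpha\cdot\lambda$, hence $\schi(s_\alpha\star\lambda)=-\schi(\lambda)\ne0$, while $(\lambda+\hat\rho)_i\ge m-r+1>r-m=\frac{\delta}{2}+r$ shows the stated condition can never hold — the paper's own proof makes the same tacit assumption, since Lemma~\ref{lem.e_i+e_j} requires $l=u>0$.
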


\begin{proof}
We have $s_\alpha\star\lambda=s_{\alpha,up}\cdot\lambda$. By Lemma~\ref{lem.e_i+e_j} in the proof of Theorem~\ref{thm.sumformula}, $\schi(s_\alpha\star\lambda)\ne0$ if and only if $s_{\alpha,up}(\lambda+\rho)$ has no repetitions and all its entries are strictly positive. The latter is the case if and only if $s_\alpha(\lambda+\hat\rho)$ has no repetitions and all its entries are $>-\frac{\delta}{2}-r$. But the only entries of $s_\alpha(\lambda+\hat\rho)$ that could be $\le-\frac{\delta}{2}-r$ are the $j^{\rm th}$ entry $-(\lambda+\hat\rho)_i$ and the $i^{\rm th}$ entry $-(\lambda+\hat\rho)_j$.
\end{proof}

\begin{lem}\label{lem.notirreducible}
Let $\lambda$ be a partition with $l(\lambda),l(\lambda')\le r$. Then $\Delta_0(\lambda)$ is not irreducible if and only if there exists a partition $\mu\subsetneqq\lambda$ which is conjugate to $\lambda$ under the star action of $W(D_r)$.
\end{lem}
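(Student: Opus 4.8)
The plan is to combine the Jantzen sum formula \eqref{eq.sumformula} (together with its non-cancellation refinement, Lemma~\ref{lem.nocancellations} and its corollary) with the combinatorial input of Proposition~\ref{prop.Wmin}. The case $u=0$ is trivial on both sides, since then $\Delta_0(\lambda)$ is irreducible and no proper $\mu\subsetneqq\lambda$ can be star-conjugate to $\lambda$ by Lemma~\ref{lem.linkage}. So assume $u\ne0$.

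For the forward direction, suppose $\Delta_0(\lambda)$ is not irreducible. By \eqref{eq.sumformula} the right-hand sum is nonzero, so there is a positive root $\alpha=\e_i+\e_j$, $1\le i<j\le r$, with $\la\lambda+\hat\rho,\alpha^\vee\ra>0$ and $\schi(s_\alpha\star\lambda)\ne0$; this is exactly the hypothesis of Lemma~\ref{lem.inclusion}, which produces a partition $\mu={\rm sort}(s_\alpha(\lambda+\hat\rho))-\hat\rho$ with $\mu\subsetneqq\lambda$ and $\mu=w\star s_\alpha\star\lambda\in W(C_r)\star\lambda$. The point requiring a small argument is that $\mu$ lies in the subgroup orbit $W(D_r)\star\lambda$, not merely $W(C_r)\star\lambda$: but $s_\alpha$ for $\alpha=\e_i+\e_j$ is the composite of a sign change in positions $i,j$ and a transposition, hence an even number of sign changes, so $s_\alpha\in W(D_r)$; and the sorting permutation $w$ is a pure permutation, also in $W(D_r)$. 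Hence $\mu\in W(D_r)\star\lambda$, giving the required $\mu$.

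For the converse, suppose there is a partition $\mu\subsetneqq\lambda$ with $\mu\in W(D_r)\star\lambda$, and assume for contradiction that $\Delta_0(\lambda)$ is irreducible. Apply Proposition~\ref{prop.Wmin}(iii) (the $W(D_r)$ version of (ii)) to the decreasing $r$-tuples $\lambda+\hat\rho$ and $\mu+\hat\rho$: since $x\star w = w(x+\hat\rho)-\hat\rho$, conjugacy of $\lambda,\mu$ under the star action of $W(D_r)$ is conjugacy of $\lambda+\hat\rho,\mu+\hat\rho$ under the ordinary action of $W(D_r)$. The proposition then gives that $\lambda+\hat\rho$ is $W(D_r)$-conjugate to $(\lambda+\hat\rho)\cap(\mu+\hat\rho) = (\lambda\cap\mu)+\hat\rho$. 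Since $\mu\subsetneqq\lambda$ and $\mu\ne\lambda$, we have $\lambda\cap\mu=\mu\subsetneqq\lambda$, so $\lambda+\hat\rho$ and $\mu+\hat\rho$ differ in at least one coordinate while being $W(D_r)$-conjugate; hence the signed permutation carrying one to the other is non-trivial. Writing this permutation as a product of reflections $s_\alpha$ in positive roots and tracking coordinates, one of these reflections must strictly decrease the tuple in the $\subseteq$-order at the first place it acts; choosing $\alpha=\e_i+\e_j$ appropriately (a reflection of type $\e_i-\e_j$ only permutes, so some $\e_i+\e_j$ must occur), we obtain a positive root with $\la\lambda+\hat\rho,\alpha^\vee\ra>0$ and, by Lemma~\ref{lem.chi}, $\schi(s_\alpha\star\lambda)\ne0$ — the constraints of Lemma~\ref{lem.chi} are met precisely because the result of the reflection is again (after sorting) a genuine partition $+\hat\rho$, forcing the positivity and no-sign-collision conditions. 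Then this $\alpha$ contributes a nonzero term $\pm\schi(\text{partition})$ to \eqref{eq.sumformula}; by the Corollary to Lemma~\ref{lem.nocancellations} the maximal such partition $\mu'$ satisfies $[\Delta_0(\lambda):L_0(\mu')]\ne0$ with $\mu'\ne\lambda$, contradicting irreducibility.

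The main obstacle is the converse direction, specifically the passage from "$\mu+\hat\rho$ and $\lambda+\hat\rho$ are $W(D_r)$-conjugate and distinct, with $\mu\subsetneqq\lambda$" to "there is a single reflection $s_{\e_i+\e_j}$ realizing a proper $\subseteq$-decrease and satisfying the $\schi\ne0$ criterion of Lemma~\ref{lem.chi}". Proposition~\ref{prop.Wmin}(iii) does most of the work by letting us replace $\mu$ by $\lambda\cap\mu=\mu$ and guaranteeing $W(D_r)$-conjugacy is preserved under the $\cap$ operation, so the remaining task is the one-reflection extraction; I expect this to follow by choosing $i$ minimal such that $(\lambda+\hat\rho)_i\ne(\mu+\hat\rho)_i$ and checking that the reflection moving $(\lambda+\hat\rho)_i$ down must be of type $\e_i+\e_j$ (type $\e_i-\e_j$ would move it up, since $\lambda+\hat\rho$ is strictly decreasing), after which Lemma~\ref{lem.chi}'s conditions are automatic because $s_\alpha(\lambda+\hat\rho)$, re-sorted, equals the partition-plus-$\hat\rho$ vector $\mu$, which has distinct strictly-large-enough entries.
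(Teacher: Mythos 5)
Your forward direction is fine and coincides with the paper's (Lemma~\ref{lem.inclusion} plus the observation that $s_{\e_i+\e_j}$ and the sorting permutation both lie in $W(D_r)$); the only blemish there is that the $u=0$ case is not disposed of "by Lemma~\ref{lem.linkage}" --- the correct reason is that for $u=0$ all entries of $\lambda+\hat\rho$ and $\mu+\hat\rho$ are strictly positive, so two star-conjugate partitions must be equal.

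The converse direction has a genuine gap, and it sits exactly where you flag "the main obstacle". First, the appeal to Proposition~\ref{prop.Wmin}(iii) is vacuous: since $\mu\subseteq\lambda$ entrywise, $\lambda\cap\mu=\mu$ already, so the proposition returns the hypothesis you started with and does no work. (Proposition~\ref{prop.Wmin} is used in the proof of Theorem~\ref{thm.sympschurblocks}, not here.) Second, the "one-reflection extraction" does not go through as sketched: if you write the signed permutation carrying $\lambda+\hat\rho$ to $\mu+\hat\rho$ as a product of reflections, a single factor $s_{\e_i+\e_j}$ applied to $\lambda+\hat\rho$ need not yield $\mu+\hat\rho$, nor any partition plus $\hat\rho$, so your justification that "the constraints of Lemma~\ref{lem.chi} are met because the result of the reflection is again a genuine partition $+\hat\rho$" is unfounded; and choosing $i$ minimal with $(\lambda+\hat\rho)_i\ne(\mu+\hat\rho)_i$ produces only one index, whereas Lemma~\ref{lem.chi} needs a \emph{pair} $i<j$ with $(\lambda+\hat\rho)_i+(\lambda+\hat\rho)_j>0$ together with the no-sign-collision conditions. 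The missing idea is the following. Write $\tilde\lambda=\lambda+\hat\rho$, $\tilde\mu=\mu+\hat\rho=w(\tilde\lambda)$ and factor $w=w_1w_2$ with $w_2$ a set of sign changes on an index set $I$ and $w_1$ the sorting permutation; after cancelling pairs $i,j\in I$ with $\tilde\lambda_i=-\tilde\lambda_j$ one may assume $\tilde\lambda_i\ne-\tilde\lambda_k$ for all $i\in I$, $k\ne i$, with $|I|$ still even and nonempty. The key identity is $2\sum_{i\in I}\tilde\lambda_i=|\lambda|-|\mu|>0$, which by pigeonhole yields $i,j\in I$ with $\tilde\lambda_i+\tilde\lambda_j>0$; and for this pair the hypotheses of Lemma~\ref{lem.chi} hold precisely because $-\tilde\lambda_i$ and $-\tilde\lambda_j$ are entries of $\tilde\mu$, hence are $>-\frac{\delta}{2}-r$ and distinct from the remaining $\tilde\lambda_k$. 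Without this coordinate-sum argument the existence of a suitable single root $\alpha=\e_i+\e_j$ is not established, and the contribution to \eqref{eq.sumformula} (nonzero by Lemma~\ref{lem.nocancellations}) cannot be produced.
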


\begin{proof}
First we note that $\Delta_0(\lambda)$ is not irreducible if and only if the sum on the right in \eqref{eq.sumformula} is nonzero. If $\Delta_0(\lambda)$ is not irreducible, then a $\mu$ as stated must exist by Lemma~\ref{lem.inclusion}. Now assume that such a $\mu$ exists. By Lemma~\ref{lem.nocancellations} it suffices to show that there exists a positive root $\alpha=\e_i+\e_j$, $1\le i<j\le r$, with $\la\lambda+\hat\rho,\alpha^\vee\ra>0$ and $\schi(s_\alpha\star\lambda)\ne0$.

Put $\tilde\lambda=\lambda+\hat\rho$ and $\tilde\mu=\mu+\hat\rho$. Let $w\in W(D_r)$ such that $\tilde\mu=w(\tilde\lambda)$. Write $w=w_1w_2$, where $w_1$ is a permutation and $w_2$ changes an even number of signs. Note that $w_1$ is the unique permutation that sorts $w_2(\tilde\lambda)$ into descending order. Let $I\subseteq\{1,\ldots,r\}$ be the set of indices whose signs are changed by $w_2$. For $i\in I$ and $k\in\{1,\ldots,r\}\sm I$ we have $\tilde\lambda_i\ne-\tilde\lambda_k$, since all entries of $\tilde\mu$ are distinct. If for some $i,j\in I$ with $i\ne j$ we have $\tilde\lambda_i=-\tilde\lambda_j$ then the sign changes on the $i^{\rm th}$ and $j^{\rm th}$ position cancel each other for $\tilde\lambda$. So, after modifying $w_1$, $w_2$ and $I$, we may assume that this does not happen and then we have $\tilde\lambda_i\ne-\tilde\lambda_k$ for any $i\in I$ and $k\in\{1,\ldots,r\}\sm \{i\}$. Note that $I\ne\emptyset$, since $\tilde\mu\ne\tilde\lambda$ and $\tilde\mu$ and $\tilde\lambda$ are strictly descending. Furthermore $|I|$ is even, since we removed an even number of indices from the original set $I$.

We have $2\sum_{i\in I}\tilde\lambda_i=|\tilde\lambda-\tilde\mu|=|\lambda-\mu|>0$. But then there must exist $i,j\in I$, $i\ne j$, such that $\tilde\lambda_i+\tilde\lambda_j>0$. Put $\alpha=\e_i+\e_j$. Then $\la\lambda+\hat\rho,\alpha^\vee\ra=\tilde\lambda_i+\tilde\lambda_j>0$. Since all entries of $\hat\rho$ are $>-\frac{\delta}{2}-r$, the same holds for the entries of $\tilde\mu$. In particular this holds for $-\tilde\lambda_i$ and $-\tilde\lambda_j$. Since $i,j\in I$, we have that for all $k\in\{1,\ldots,r\}\sm\{i,j\}$, $\tilde\lambda_k\ne-\tilde\lambda_i$ and $\tilde\lambda_k\ne-\tilde\lambda_j$. So, by Lemma~\ref{lem.chi}, $\schi(s_\alpha\star\lambda)\ne0$.
\end{proof}

Our proof of the block result for $S_0(n,r)$ is very similar to that for the Brauer algebra in \cite[Cor.~6.7]{CdVM1}.

\begin{thm}\label{thm.sympschurblocks}
Assume that $m\ge r$ and that $|\delta|+2r<p/2$. Let $\lambda,\mu\in\Lambda_0^+(r,r)$. Then $\lambda$ and $\mu$ are in the same block of $S_0(n,r)$ if and only if they are conjugate under the star action of $W(D_r)$.
\end{thm}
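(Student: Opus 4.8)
The \lq\lq only if\rq\rq{} direction is precisely Lemma~\ref{lem.linkage}(ii), so the content is the converse, and the plan is to reduce it to the following claim: if $\lambda,\nu\in\Lambda_0^+(r,r)$ are conjugate under the star action of $W(D_r)$ and $\nu\subsetneqq\lambda$, then $\lambda$ and $\nu$ lie in the same block of $S_0(n,r)$. Granting the claim, take $\lambda,\mu\in\Lambda_0^+(r,r)$ conjugate under the star action and put $\nu=\lambda\cap\mu$. Since $\lambda$ and $\mu$ are decreasing, $\nu$ is a partition with $\nu\subseteq\lambda$ and $\nu\subseteq\mu$, and by Proposition~\ref{prop.Wmin}(ii)--(iii) it is conjugate under the star action of $W(D_r)$ to both $\lambda$ and $\mu$. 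Conjugacy under this action preserves the parity of $|\cdot|$ on $\mb Z^r$ (a short computation from the explicit form of $\hat\rho$ and the fact that elements of $W(D_r)$ make an even number of sign changes), so $|\nu|\equiv|\lambda|\equiv r\pmod 2$ and hence $\nu\in\Lambda_0^+(r,r)$. If $\nu=\lambda$ then $\lambda\subseteq\mu$, so either $\lambda=\mu$ or $\lambda\subsetneqq\mu$ and the claim applied to $(\mu,\lambda)$ gives $\lambda\sim\mu$; the case $\nu=\mu$ is symmetric; otherwise $\nu\subsetneqq\lambda$ and $\nu\subsetneqq\mu$, and the claim gives $\lambda\sim\nu\sim\mu$.

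To prove the claim I would induct on $|\lambda|$ (note $\nu\subsetneqq\lambda$ forces $|\nu|<|\lambda|$). Since $\nu\subsetneqq\lambda$ is conjugate to $\lambda$ under the star action of $W(D_r)$, the argument in the proof of Lemma~\ref{lem.notirreducible} (via Lemma~\ref{lem.nocancellations}) produces a positive root $\alpha=\e_i+\e_j$, $1\le i<j\le r$, with $\langle\lambda+\hat\rho,\alpha^\vee\rangle>0$ and $\schi(s_\alpha\star\lambda)\ne0$, so the set $\Lambda_\lambda$ of the corollary to Lemma~\ref{lem.nocancellations} is nonempty. Let $\mu'$ be a $\subseteq$-maximal element of $\Lambda_\lambda$. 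By that corollary $[\Delta_0(\lambda):L_0(\mu')]\ne0$; since $S_0(n,r)$ is quasi-hereditary and $\Delta_0(\cdot)$, $\nabla_0(\cdot)$ have equal formal characters (hence equal composition factors), this forces $\lambda\sim\mu'$. By Lemma~\ref{lem.inclusion}, $\mu'$ is a partition with $\mu'\subsetneqq\lambda$ and $\mu'=w\star s_\alpha\star\lambda$ for a permutation $w$; as $w$ and the reflection $s_\alpha$ in the short root $\e_i+\e_j$ both lie in $W(D_r)$, $\mu'$ is conjugate to $\nu$ under the star action of $W(D_r)$. From $\mu'\subsetneqq\lambda$ we get $|\mu'|<|\lambda|$, and $|\mu'|\equiv|\lambda|\equiv r\pmod 2$ (the reflection $s_\alpha$ changes $|\cdot|$ by an even amount and sorting preserves it), so $\mu'\in\Lambda_0^+(r,r)$.

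It remains to join $\mu'$ to $\nu$. If $\mu'=\nu$, we are done. If one of $\mu',\nu$ strictly contains the other, then — since both $\mu'$ and $\nu$ are strictly contained in $\lambda$ — the inductive hypothesis applied to the pair $\{\mu',\nu\}$, whose larger member has size $<|\lambda|$, gives $\mu'\sim\nu$. If $\mu'$ and $\nu$ are incomparable, put $\rho=\mu'\cap\nu$; then $\rho\subsetneqq\mu'$ and $\rho\subsetneqq\nu$, and by Proposition~\ref{prop.Wmin}(ii)--(iii) together with the parity remark, $\rho\in\Lambda_0^+(r,r)$ is conjugate under the star action of $W(D_r)$ to both $\mu'$ and $\nu$; applying the inductive hypothesis to $(\mu',\rho)$ and to $(\nu,\rho)$, whose larger members both have size $<|\lambda|$, yields $\mu'\sim\rho\sim\nu$. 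In every case $\lambda\sim\mu'\sim\nu$, which completes the induction, hence the claim and the theorem.

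All the substantive inputs — Lemmas~\ref{lem.inclusion}, \ref{lem.notirreducible}, \ref{lem.nocancellations} with its corollary, and Proposition~\ref{prop.Wmin} — are already available, so I expect the only real obstacle to be the bookkeeping: verifying at each step that the partitions produced by $\cap$ and by sorting star-reflections still satisfy the length bounds $l(\cdot),l(\cdot')\le r$ and the size-parity condition, so that they stay in $\Lambda_0^+(r,r)$, and that the induction on $|\lambda|$ is well-founded, i.e.\ that every appeal to the inductive hypothesis is for a pair whose larger partition has strictly smaller size. This mirrors the proof of \cite[Cor.~6.7]{CdVM1} for the Brauer algebra, carried out here for $S_0(n,r)$ via the strengthened Jantzen sum formula.
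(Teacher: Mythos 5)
Your proof is correct and rests on exactly the same ingredients as the paper's, namely Lemma~\ref{lem.linkage}, Proposition~\ref{prop.Wmin}(iii), Lemma~\ref{lem.notirreducible} and the corollary to Lemma~\ref{lem.nocancellations}. The paper packages the descent more compactly, observing that each star-linkage class has a unique $\subseteq$-minimal element and that a partition which is not $\subseteq$-minimal in its linkage class cannot be $\subseteq$-minimal in its block, whereas you unwind the same descent as an explicit induction on $|\lambda|$ through $\lambda\cap\mu$; both arguments are fine.
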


\begin{proof}
By Lemma~\ref{lem.linkage}(ii) we only have to show that $\lambda$ and $\mu$ are in the same block of $S_0(n,r)$ if they are conjugate under the star action of $W(D_r)$. By Proposition~\ref{prop.Wmin}(iii) every linkage class under the star action contains a unique $\subseteq$-minimal element. So it suffices to show that if $\lambda$ is not $\subseteq$-minimal in its linkage class, then it is not $\subseteq$-minimal in its block. This follows immediately from Lemma~\ref{lem.notirreducible} and Lemma~\ref{lem.linkage}(i) (or the corollary to Lemma~\ref{lem.nocancellations}).
\end{proof}

\section{The orthogonal Schur algebra and Schur functor}\label{s.orthschur}

Throughout this section we assume that ${\rm char}\,k\ne2$. Furthermore, $n$ is an integer $\ge2$ and we put $m=\lfloor n/2\rfloor$. Let $i\mapsto i'$ be the involution of $\{1,\ldots,n\}$ defined by $i':=n+1-i$ and define the $n\times n$-matrix $J$ with coefficients in $k$ by $J_{ij}=\delta_{ij'}$. So

$$J=
\begin{bmatrix}
0&&1\\
&\rot{75}{$\ddots$}&\\
1&&0\\
\end{bmatrix}.
$$
Let $E=k^n$ be the space of column vectors of length $n$ with standard basis $e_1,\ldots,e_n$.
On $E$ we define the nondegenerate symmetric bilinear form $(\ ,\ )$ by
$$(u,v):=u^TJv=\sum_{i=1}^nu_iv_{i'}\ .$$
Then $(e_i,e_j)=J_{ij}$. The {\it orthogonal group} $\Ort_n=\Ort_n(k)$ is defined as the group of $n\times n$-matrices over $k$ that satisfy $A^TJA=J$, that is, the invertible matrices for which the corresponding automorphism of $E$ preserves the form $(\ ,\ )$. The {\it special orthogonal group} $\SO_n=\SO_n(k)$ consists of the matrices in $\Ort_n$ that have determinant $1$. The vector space $E$ is the natural module for $\GL_n$ and for $\Ort_n$ and $\SO_n$. We denote the maximal torus of $\SO_n$ that consists of the diagonal matrices by $T_1$. Then $T_1$ consists of the diagonal matrices $t$ with $t_it_{i'}=1$ for all $i\in\{1,\ldots,m\}$ and with $t_{m+1}=1$ in case $n$ is odd. The character group of $T_1$ can be identified with $\mb Z^m$. The root system of $\SO_n$ with respect to $T_1$ is of type $B_m$ if $n$ is odd and of type $D_m$ if $n$ is even. We choose the set of roots of $T_1$ in the Lie algebra of the Borel subgroup of upper triangular matrices in $\SO_{2n}$ as the system of positive roots. A weight $\lambda\in\mb Z^m$ is dominant if and only if $\lambda_1\ge\lambda_2\cdots\ge\lambda_{m-1}\ge\lambda_m\ge0$ in case $n$ is odd and it is dominant if and only if $\lambda_1\ge\lambda_2\cdots\ge\lambda_{m-1}\ge|\lambda_m|$ in case $n$ is even. We define the $\e_i$ as in Subsection~\ref{ss.sympschur}. We note that the group of weights of $\SO_n$ is a proper subgroup of the weight lattice of the root system, since $\SO_n$ is not simply connected. For $\lambda$ a dominant weight of $\SO_n$ we denote the corresponding induced module by $\nabla_1(\lambda)$. The Schur algebra, Schur functor and inverse Schur functor are as defined in Sections~\ref{s.prelim} and \ref{s.sympschur}. The definitions and results for the Schur algebra given there are of course also valid for odd $n$. Let $r$ be an integer $\ge0$. We define the {\it orthogonal Schur algebra} $S_1(n,r)$ to be the enveloping algebra of $\Ort_n$ in $\End_k(E^{\otimes r})$.

Let $B_r=B_r(n)$ be the Brauer algebra. There is a natural homomorphism $B_r\to\End_{\Ort_n}(E^{\otimes r})$. As in the symplectic case, one shows using classical invariant theory that this homomorphism is surjective and that it is injective if $n\ge 2r$. This is completely analogous to the symplectic case, see \cite[Sect.~3]{T}. In \cite{Cliff} a bideterminant basis is given for $k[\Ort_n]$. Cliff has informed us that his arguments also show that $k[\Mat_n]$ modulo the relations (60) in \cite{Doty} is spanned by $\Ort_n$-standard bideterminants multiplied by a power of the coefficient of dilation. From this one deduces that these relations generate the vanishing ideal of the orthogonal monoid. Now it follows that $\End_{B_r}(E^{\otimes r})=S_1(n,r)$ by \cite[Rem.~3.3]{T}. See \cite{DotyHu} for another approach to the double centralizer theorem for the orthogonal group.

By \cite[Prop.~3.3(iii)]{Brun} every $\GL_n$-module with a good $\GL_n$-filtration also has a good $\SO_n$-filtration.
From this one easily deduces that for every partition $\lambda$ of length at most $m$, restriction of functions defines an epimorphism $\nabla(\lambda)\to\nabla_1(\lambda)$ (Note that in case $n$ is even, $\SO_n$ also has other dominant weights). It also follows that a tilting module for $\GL_n$ is a tilting module for $\SO_n$, by restriction. Now let $S^0_1(n,r)$ be the enveloping algebra of $\SO_n$ in $\End_k(E^{\otimes r})$. Clearly $S^0_1(n,r)^*$ is the coefficient space of the $\SO_n$-module $E^{\otimes r}$. Recall the definition of $\Lambda_0^+(m,r)$ from Subsection~\ref{ss.sympschur}. We now consider the cases $n$ is even and $n$ is odd separately.

First assume that $n$ is even and $>2r$. Then $m>r$ and it is not hard to check that the set $\Lambda_0^+(m,r)$ is saturated.
The arguments in \cite[Sect.~8]{Don8} now show that $S^0_1(n,r)^*=O_{\Lambda_0^+(m,r)}(k[\SO_n])$ which means that $S^0_1(n,r)$ is the generalized Schur algebra associated to $\SO_n$ and $\Lambda_0^+(m,r)$. We will now show that $S^0_1(n,r)=S_1(n,r)$. The dimension of $S^0_1(n,r)$ is independent of the field by \cite[(2.2c)]{Don1}. By \cite[Prop.~1(i)]{T} the vector space $S_1(n,r)$ is isomorphic to the dual of the vector space $A_{\rm GO}(n,r,k)$ in \cite[Thm.~8.1]{Cliff}, the dimension of which is independent of the field. So we may now assume that ${\rm char}\,k=0$. Then the algebras $S^0_1(n,r)$ and $S_1(n,r)$ are semisimple, so it suffices to check that their centralizers coincide. By the first fundamental theorem of invariant theory for $\SO_n$, \cite[Sect.~II.9]{Weyl} or \cite[Thm.~5.6(ii)]{DeCProc}, we have that $k[\oplus^{2r}E]^{\SO_n}=k[\oplus^{2r}E]^{\Ort_n}$, since $n>2r$. Taking the multilinear invariants on both sides we obtain $\End_{\SO_n}(E^{\otimes r})\cong(E^{\otimes 2r})^{*,\SO_n}=(E^{\otimes 2r})^{*,\Ort_n}\cong\End_{\Ort_n}(E^{\otimes r})$. The two isomorphisms here were observed by Brauer in \cite{Br}.

Now we treat the case $n$ is odd. In this case the set $\Lambda_0^+(m,r)$ is clearly not saturated, since $(r-1)\e_1\le r\e_1\in\Lambda_0^+(m,r)$ and $(r-1)\e_1\notin\Lambda_0^+(m,r)$. So we will proceed differently. First note that $S^0_1(n,r)=S_1(n,r)$, since $-\id\in\Ort_n\sm\SO_n$.

Let $S$ be the enveloping algebra of $\Ort_n$ in $\End_k(E^{\otimes r}\oplus E^{\otimes (r-1)})$. By \cite[Prop.~1(ii)]{T} we have that $S^*\cong k[\Ort_n]^{\le r}$, the coalgebra of regular functions on $\Ort_n$ of filtration degree $\le r$ (the summands $E^{\otimes s}$, $s<r-1$, occurring in \cite{T} can be omitted). The defining ideal of $\Ort_n$ is homogeneous for the $\mb Z_2$-grading of $k[\Mat_n]$ (the two graded subspaces are the sums of the $\mb Z$-graded subspaces of even and odd degree respectively), so $k[\Ort_n]^{\le r}$ has a direct sum decomposition into two sub coalgebra summands, and therefore $S$ has a direct sum decomposition in two ideal summands: $S=S(0)\oplus S(1)$. We have $S_1(n,r)=S(0)$ if $r$ is even and $S_1(n,r)=S(1)$ if $r$ is odd. The bideterminant basis of $k[\Ort_n]$ in \cite[Cor~6.2]{Cliff} gives a basis for the subspace $k[\Ort_n]^{\le r}$ which is labelled by pairs of $\Ort_n$-standard tableaux of some shape $\lambda$ with $|\lambda|\le r$.
In particular $k[\Ort_n]^{\le r}$, and therefore $S$, has dimension independent of the field.

Now let $S^0$ be the enveloping algebra of $\SO_n$ in $\End_k(E^{\otimes r}\oplus E^{\otimes (r-1)})$ and let $\pi$ be the set of dominant weights $\{\lambda\in\Lambda^+(m)\,|\,|\lambda|\le r\}$. The set $\pi$ is clearly saturated and one checks using the arguments in \cite[Sect.~8]{Don8} that $S^0$ is the generalized Schur algebra associated to $\SO_n$ and $\pi$.

Now assume furthermore that $n>2r$. Then we deduce as in the case $n$ is even that $S^0=S$. This also means that the restriction of functions $k[\Ort_n]^{\le r}\to k[\SO_n]^{\le r}$ is an isomorphism. So $S$ is the generalized Schur algebra associated to $\SO_n$ and $\pi$, and one easily checks that the direct sum decomposition $S=S(0)\oplus S(1)$ corresponds to the partition of $\pi$ into the sets $\Lambda_0^+(m,r)$ and $\pi\sm\Lambda_0^+(m,r)$. So $S_1(n,r)=O_{\Lambda_0^+(m,r)}(k[\SO_n])^*$ is a direct ideal summand of the generalized Schur algebra $S$ and therefore a quasihereditary algebra; $\Lambda_0^+(m,r)$ is the set of labels of its irreducibles. From the arguments in \cite[2.2]{Don1} it is now also clear that $S_1(n,r)$ has dimension $\sum_{\lambda\in\Lambda_0^+(m,r)}\dim(\nabla_1(\lambda))^2$, which is independent of the field $k$.

\smallskip
In the remainder of this section we assume that $n>2r$.
\smallskip

We define the {orthogonal Schur functor} $f_1:{\rm mod}(S_1(n,r))\to{\rm mod}(B_r)$ and the {\it inverse orthogonal Schur functor} $g_1:{\rm mod}(B_r)\to{\rm mod}(S_1(n,r))$ in precisely the same way as in the symplectic case.

The proofs of the orthogonal versions of Lemmas~\ref{lem.dimension}, \ref{lem.diagram} and \ref{lem.surjective} are completely analogous. Once the orthogonal version of (*) in Lemma~\ref{lem.dimension} is proved for all $r$ with $2(r+1)<n$, the rest of the proof is the same as in the symplectic case.
For the weight space argument in Lemma~\ref{lem.diagram}(ii) one can use the orthogonal standard tableaux from \cite{KingWelsh}. Now we obtain the orthogonal version of Theorem~\ref{thm.sympschur}.

\begin{thm}\label{thm.orthschur}
The following holds.
\begin{enumerate}[{\rm(i)}]
\item For $\lambda\in\Lambda^+_0(m,r)$ we have
\begin{align*}
f_1(\nabla_1(\lambda))&\cong {\mc S}(\lambda),\\
f_1(S^\lambda E)&\cong {\mc M}(\lambda)\text{\quad and}\\
f_1(\bigwedge{\Nts}^\lambda E)&\cong \widetilde{\mc M}(\lambda)\text{\quad if ${\rm char}\,k=0$ or $>|\lambda|$.}
\end{align*}
\item Let $M$ be an $S(n,t)$-module. If $M$ is a direct sum of direct summands of $E^{\otimes t}$ or if $M$ is injective, then
$$f_1(M)\cong {\rm Ind}^{B_r}_{k\Sym_t}f(M).$$
\end{enumerate}
\end{thm}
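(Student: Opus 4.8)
The plan is to transcribe the proof of Theorem~\ref{thm.sympschur}, using the orthogonal versions of Lemmas~\ref{lem.dimension}, \ref{lem.diagram} and \ref{lem.surjective} and the fact (noted above) that $E^{\otimes r}$ is a tilting $\Ort_n$-module, so that $f_1$ is exact on modules with a good filtration. The one bookkeeping difference is the \emph{absence of a sign twist}: in the orthogonal case the natural homomorphism $B_r(n)\to\End_{\Ort_n}(E^{\otimes r})$ is already the one in which horizontal edges act by contraction against, and multiplication by, the symmetric invariant $\sum_i e_i\otimes e_{i'}$, and there is no intermediate isomorphism $\widetilde{B}_r(n)\stackrel{\sim}{\to}B_r(-n)$ as in Subsection~\ref{ss.braueralgebra}. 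Hence, using that the standard anti-automorphism $\iota$ of $B_r$ acts as the transpose for the symmetric form on $E^{\otimes r}$ (so $E^{\otimes r}$ is self-dual over $B_r$), the analogue of \eqref{eq.I_siso} is the \emph{untwisted} isomorphism $\varphi:I_s\stackrel{\sim}{\to}\Hom_{\Ort_n}(E^{\otimes r},E^{\otimes t})$ of $(B_r,k\Sym_t)$-bimodules, and \eqref{eq.Z_siso} likewise loses its factor $k_{\rm sg}$. Together with the identities $f(S^\lambda E)=M(\lambda)$ and $f(\bigwedge{\Nts}^\lambda E)=k_{\rm sg}\otimes M(\lambda)$ of \cite[Lemma~3.5]{Don6}, this is exactly what converts the twisted modules of Theorem~\ref{thm.sympschur}(i) into the (un)twisted modules appearing in the statement above.

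For the first isomorphism, let $\lambda\in\Lambda^+_0(m,r)$ be a partition of $t=r-2s$. The orthogonal version of Lemma~\ref{lem.surjective}(i) together with $\varphi$ gives an epimorphism $I_s\otimes_{k\Sym_t}S(\lambda)\to f_1(\nabla_1(\lambda))$. Exactly as in the proof of Theorem~\ref{thm.sympschur}(i) this kills $\varphi(I_{s,1})$: the image of a nonzero $\Ort_n$-homomorphism $E^{\otimes r}\to\nabla_1(\lambda)$ contains the socle $L_1(\lambda)$ and so has $\lambda$ as a weight, whereas $\varphi(I_{s,1})$ has a basis of homomorphisms whose image lies in a submodule of $E^{\otimes t}$ isomorphic to $E^{\otimes(t-2)}$, and so does not have $\lambda$ as a weight. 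We thus obtain an epimorphism $\mc S(\lambda)\to f_1(\nabla_1(\lambda))$, which is an isomorphism because by the orthogonal version of Lemma~\ref{lem.dimension} both sides have dimension $\frac{r!}{s!t!2^s}\dim S(\lambda)$.

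For the remaining assertions, the orthogonal version of Lemma~\ref{lem.surjective}(ii) and $\varphi$ provide a natural homomorphism $(*)\colon{\rm Ind}^{B_r}_{k\Sym_t}f(M)\to f_1(M)$ for every $S(n,t)$-module $M$, an isomorphism when $M$ is a direct sum of direct summands of $E^{\otimes t}$ and surjective when $M$ is injective. With $M=S^\lambda E$ and $M=\bigwedge{\Nts}^\lambda E$ this yields maps $\mc M(\lambda)\to f_1(S^\lambda E)$ and $\widetilde{\mc M}(\lambda)\to f_1(\bigwedge{\Nts}^\lambda E)$. If ${\rm char}\,k=0$ or $>t$ then $S(n,t)$ is semisimple, every module is a direct sum of direct summands of $E^{\otimes t}$, and $(*)$ is an isomorphism for all $M$; in particular the exterior-power isomorphism in (i) follows. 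To obtain $f_1(S^\lambda E)\cong\mc M(\lambda)$ for arbitrary ${\rm char}\,k\ne2$, note that $S^\lambda E$ is injective, so $(*)$ is surjective, and it suffices to check both dimensions are independent of the characteristic: on the left, Proposition~\ref{prop.filtration}(ii) gives $Z_{s,i}\otimes_{k\Sym_t}M(\lambda)\cong Z_{s+i}\otimes_{k\Sym_{t-2i}}\big(M(\lambda)\big)_{H_i}$, and since $M(\lambda)$ is a permutation module $\dim\big(M(\lambda)\big)_{H_i}$ is the number of $H_i$-orbits on its standard basis, independent of the characteristic (no sign homomorphism enters, so this step is easier than its symplectic counterpart); on the right, $\dim f_1(S^\lambda E)$ depends only on the formal characters of the $\Ort_n$-modules $E^{\otimes r}$ and $S^\lambda E$ by \cite[Prop.~A.2.2(ii)]{Don7}, and these are characteristic-independent. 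Finally, every injective $S(n,t)$-module is a direct sum of direct summands of the $S^\lambda E$, $\lambda\in\Lambda^+(n,t)$, by \cite[Lem.~3.4(i)]{Don6}; since the class of $M$ for which $(*)$ is an isomorphism is closed under direct sums and direct summands, the isomorphism just established for $S^\lambda E$ yields (ii) for injective $M$. The only genuine obstacle is getting the sign twists right; once the untwisted $\varphi$ is in place the argument is that of Theorem~\ref{thm.sympschur} verbatim.
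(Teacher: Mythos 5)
Your proposal is correct and follows the same route as the paper, which itself only says that the orthogonal argument is obtained from the proof of Theorem~\ref{thm.sympschur} verbatim once the orthogonal versions of Lemmas~\ref{lem.dimension}, \ref{lem.diagram} and \ref{lem.surjective} are in place. Your explicit tracking of where the sign twist disappears (the untwisted $\varphi:I_s\stackrel{\sim}{\to}\Hom_{\Ort_n}(E^{\otimes r},E^{\otimes t})$, coming from the absence of the intermediate isomorphism $\widetilde{B}_r(n)\stackrel{\sim}{\to}B_r(-n)$) and your observation that the characteristic-independence of $\dim\mc M(\lambda)$ is easier here because the coinvariants of a permutation module are counted by orbits are both accurate and consistent with the swap of twisted and untwisted modules between the two theorems.
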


For $\lambda\in\Lambda_0^+(m,r)$ we denote the indecomposable tilting module for $\SO_n$ of highest weight $\lambda$ by $T_1(\lambda)$ and for $\lambda$ $p$-regular we denote the projective cover of the irreducible $B_r$-module ${\mc D}(\lambda)$ by ${\mc P}(\lambda)$. The proof of the orthogonal version of Proposition~\ref{prop.sympmult} and its corollary are completely analogous.
\begin{prop}
Let $\lambda\in\Lambda_0^+(m,r)$. Then $T_1(\lambda)$ is a direct summand of the $\SO_n$-module $E^{\otimes r}$ if and only if $\lambda$ is $p$-regular and $\lambda\ne\emptyset$ in case $r$ is even $\ge2$ and $\delta=0$. Now assume that $\lambda$ satisfies these conditions. Then
\begin{enumerate}[{\rm(i)}]
\item $f_1(T_1(\lambda))={\mc P}(\lambda)$.
\item The multiplicity of $T_1(\lambda)$ in $E^{\otimes r}$ is $\dim{\mc D}(\lambda)$.
\item The decomposition number $[{\mc S}(\mu):{\mc D}(\lambda)]$ is equal to the $\Delta$-filtration multiplicity $(T_1(\lambda):\Delta_1(\mu))$ and to the $\nabla$-filtration multiplicity $(T_1(\lambda):\nabla_1(\mu))$.
\end{enumerate}
\end{prop}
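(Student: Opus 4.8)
The plan is to transcribe the proof of Proposition~\ref{prop.sympmult} into the orthogonal setting, replacing $\Sp_n$ by $\SO_n$, the functors $(f_0,g_0)$ by $(f_1,g_1)$, the tilting modules $T_0(\lambda)$ by $T_1(\lambda)$, and the \emph{twisted} modules $\widetilde{\mc S}(\lambda)$, $\widetilde{\mc D}(\lambda)$, $\widetilde{\mc P}(\lambda)$ by the \emph{untwisted} ones ${\mc S}(\lambda)$, ${\mc D}(\lambda)$, ${\mc P}(\lambda)$: in the orthogonal case the Brauer algebra acts on $E^{\otimes r}$ without a sign twist, so Theorem~\ref{thm.orthschur}(i) reads $f_1(\nabla_1(\lambda))\cong{\mc S}(\lambda)$. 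Throughout one uses that $S_1(n,r)$ is the enveloping algebra of $\SO_n$ in $\End_k(E^{\otimes r})$ (established above, for $n$ even or odd), that $E$ is a self-dual $\SO_n$-module since its form is symmetric and nondegenerate, that $E^{\otimes r}$ is a tilting $\SO_n$-module, that a tilting $\GL_n$-module restricts to a tilting $\SO_n$-module, and the orthogonal analogues of Theorem~\ref{thm.sympschur}, of the adjunction \eqref{eq.adjointiso}, and of the fact that $g_1\circ f_1\cong\id$ on direct summands of $E^{\otimes r}$ --- all of which the text has declared to go through verbatim.

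First I would settle the ``if and only if''. Let $\Omega$ be the set of $p$-regular $\lambda\in\Lambda_0^+(m,r)$ with $\lambda\ne\emptyset$ in case $r$ is even $\ge2$ and $\delta=0$, i.e.\ the partitions satisfying the stated conditions. As in the symplectic case, $f_1$ induces an equivalence between the category of direct sums of direct summands of the $\SO_n$-module $E^{\otimes r}$ and the category of projective $B_r$-modules (cf.\ \cite[Prop~2.1(c)]{ARS}), so the number of indecomposable summands of $E^{\otimes r}$ is $|\Omega|$, and it suffices to show each $T_1(\lambda)$, $\lambda\in\Omega$, is a summand of $E^{\otimes r}$. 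By Lemma~\ref{lem.directsummand}(ii) (whose hypothesis holds precisely for $\lambda\in\Omega$, the factor $\dim E=n$ mattering only when $t=|\lambda|=0$, i.e.\ $\delta=0$) we reduce to $t=r$; then the indecomposable $\GL_n$-tilting module $T(\lambda)$ is a summand of $E^{\otimes r}$ (e.g.\ \cite[Sect.~4.3, (1) and (4)]{Don7}), it is a tilting $\SO_n$-module, and it has highest $\SO_n$-weight $\lambda$ occurring with multiplicity one (in the even case $n>2r$ forces $m>r\ge l(\lambda)$, so $\lambda_m=0$ and there is no $D_m$-ambiguity). Hence $T(\lambda)\cong T_1(\lambda)\oplus Y$ with $Y$ a sum of $\SO_n$-tilting modules of strictly smaller highest weight, and $T_1(\lambda)$ is a summand of $E^{\otimes r}$.

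Now fix $\lambda\in\Omega$. For (i): by Theorem~\ref{thm.orthschur}(i) and exactness of $f_1$ on good filtrations, $f_1(T_1(\lambda))$ surjects onto $f_1(\nabla_1(\lambda))={\mc S}(\lambda)$, which surjects onto ${\mc D}(\lambda)$; since $f_1(T_1(\lambda))$ is projective it must be ${\mc P}(\lambda)$. Part (ii) then follows at once, the multiplicity of $T_1(\lambda)$ in $E^{\otimes r}$ being the multiplicity of ${\mc P}(\lambda)$ in $B_r=f_1(E^{\otimes r})$, which is $\dim{\mc D}(\lambda)$. For (iii), note $g_1(f_1(M))\cong M$ canonically for $M=E^{\otimes r}$ (because $f_1(E^{\otimes r})\cong B_r$), hence also for its summand $M=T_1(\lambda)$; so by the orthogonal analogue of \eqref{eq.adjointiso}, $\Hom_{\SO_n}(T_1(\lambda),M)\cong\Hom_{B_r}({\mc P}(\lambda),f_1(M))$ for every $S_1(n,r)$-module $M$. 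Then
\begin{align*}
[{\mc S}(\mu):{\mc D}(\lambda)]&=\dim\Hom_{B_r}({\mc P}(\lambda),{\mc S}(\mu))=\dim\Hom_{B_r}({\mc P}(\lambda),f_1(\nabla_1(\mu)))\\
&=\dim\Hom_{\SO_n}(T_1(\lambda),\nabla_1(\mu))=(T_1(\lambda):\Delta_1(\mu)),
\end{align*}
using Theorem~\ref{thm.orthschur}(i) in the second step and the good filtration of the tilting module $T_1(\lambda)$ in the last; finally $(T_1(\lambda):\Delta_1(\mu))=(T_1(\lambda):\nabla_1(\mu))$ since both equal the coefficient of the Weyl character $\schi(\mu)$ in $\ch T_1(\lambda)$ (as $\ch\Delta_1(\mu)=\ch\nabla_1(\mu)$ and the Weyl characters are linearly independent).

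The only points beyond pure transcription are minor: the restriction of $T(\lambda)$ to $\SO_n$ must be seen to have $\lambda$ as its unique top weight of multiplicity one, where the standing hypothesis $n>2r$ is what excludes the extra $D_m$-dominant weights in the even case; and one must keep track of the case $\lambda=\emptyset$, $\delta=0$ in the reduction to $t=r$. I expect neither to be a real obstacle --- the substantive work was already invested in setting up $f_1$, $g_1$ and Theorem~\ref{thm.orthschur}, which the paper has declared analogous to the symplectic development.
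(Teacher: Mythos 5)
Your proposal is correct and follows exactly the route the paper intends: the paper's entire ``proof'' of this proposition is the sentence that it is completely analogous to Proposition~\ref{prop.sympmult}, and you have carried out that transcription faithfully, including the two genuinely non-automatic checks (that $n>2r$ forces $m>r\ge l(\lambda)$ so the restriction of $T(\lambda)$ to $\SO_n$ has $\lambda$ as its unique highest weight with multiplicity one, with no $D_m$-type ambiguity, and the handling of $\lambda=\emptyset$ when $\delta=0$ in the reduction to $t=r$ via Lemma~\ref{lem.directsummand}). Nothing further is needed.
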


For a fixed integer $m'$ and a partition $\lambda$ with $l(\lambda)\le m$ and $l(\lambda')=\lambda_1\le m'$ we define $\lambda^\dagger$ as in Section~\ref{s.sympschur}. In the corollary below we apply the previous proposition in the case that $n=2m$ is even.

\begin{cornn}
Let $\lambda,\mu\in\Lambda_0^+(r,r)$ with $\lambda$ $p$-regular. Assume that $\lambda_1,\mu_1\le m'$. Then we have the equality of decomposition numbers
$$[{\mc S}(\mu):{\mc D}(\lambda)]=[\nabla_1'(\lambda^\dagger):L_1'(\mu^\dagger)]\,,$$
where $\nabla'_1$ and $L'_1$ denote induced and irreducible modules for $\SO_{2m'}$.
\end{cornn}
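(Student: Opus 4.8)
The plan is to argue exactly as for the corollary to Proposition~\ref{prop.sympmult} in the symplectic case, with $\Sp_{2m}$ replaced by $\SO_{2m}$; the orthogonal analogue of Proposition~\ref{prop.sympmult} has just been recorded (the proposition above), so what remains is only a formal concatenation of two identities. For the set-up, note that here $n=2m$ is even with $n>2r$, hence $m>r$, and since $\lambda,\mu\in\Lambda_0^+(r,r)$ we have $l(\lambda),l(\mu)\le r<m$ while $\lambda_1,\mu_1\le m'$ by hypothesis; thus the diagrams of $\lambda$ and $\mu$ fit into the $m\times m'$ rectangle and $\lambda^\dagger,\mu^\dagger$ are defined, with at most $m'$ parts and first part at most $m$.

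First I would apply part~(iii) of the proposition above. Since $\lambda$ is $p$-regular (the exceptional case being taken care of by the standing convention on $\mc D(\lambda)$ when $\delta=0$ in $k$ and $r$ is even $\ge2$), $T_1(\lambda)$ is an indecomposable summand of the $\SO_{2m}$-module $E^{\otimes r}$ and
$$[\mc S(\mu):\mc D(\lambda)]=(T_1(\lambda):\nabla_1(\mu))=(T_1(\lambda):\Delta_1(\mu)),$$
so that $[\mc S(\mu):\mc D(\lambda)]$ is the $\nabla$-filtration multiplicity of $\nabla_1(\mu)$ in the indecomposable tilting module $T_1(\lambda)$ for $\SO_{2m}$.

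Second I would feed this into the orthogonal counterpart of \cite[Cor~2.4]{AdRyb}, which expresses such a tilting filtration multiplicity for $\SO_{2m}$ as a decomposition number for $\SO_{2m'}$ via the transpose-of-complement operation $\dagger$:
$$(T_1(\lambda):\nabla_1(\mu))=[\nabla_1'(\lambda^\dagger):L_1'(\mu^\dagger)].$$
Chaining the two displays gives the assertion. The step I expect to be the real obstacle is this second one: one has to be sure that the Howe/Ringel-duality identity of \cite[Cor~2.4]{AdRyb} is available, with the correct normalisation, for the special orthogonal group and not merely for the symplectic (or general linear) group. If it is not stated there for $\SO$, the remedy is to check that its proof carries over to $\SO_{2m}$ --- the ingredients being Donkin's realisation of the indecomposable tilting modules of a classical group as summands of tensor powers of the natural module, together with the skew Howe duality for the pair $(\SO_{2m},\SO_{2m'})$ on the relevant exterior algebra --- after which only the formal chaining above remains.
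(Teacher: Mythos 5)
Your proposal is correct and is exactly the paper's argument: the corollary is obtained by combining part (iii) of the orthogonal analogue of Proposition~\ref{prop.sympmult} with the orthogonal case of \cite[Cor~2.4]{AdRyb}, just as the symplectic corollary follows from Proposition~\ref{prop.sympmult}(iii) and \cite[Cor~2.4]{AdRyb}. Your worry about the availability of the Adamovich--Rybnikov identity for $\SO_{2m}$ is reasonable but unproblematic, since their results are stated for the classical groups generally, which is what the paper implicitly relies on when it declares the orthogonal proofs "completely analogous".
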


The orthogonal versions of Remarks~\ref{rems.sympschur}, 1, 2 and 4 and Lemma~\ref{lem.tensor} can be proved in precisely the same way. We have the orthogonal version of \eqref{eq.g_0}
$$f_1(A_1(n,r))=\Hom_{\SO_n}(E^{\otimes r},S_1(n,r)^*)\cong\Hom_{\SO_n}(S_1(n,r),E^{\otimes r})\cong E^{\otimes r}\text{\ and}$$
\begin{equation}\label{eq.g_1}
g_1(E^{\otimes r})=E^{\otimes r}\otimes_{B_r}E^{\otimes r}\cong\End_{B_r}(E^{\otimes r})^*=S_1(n,r)^*\cong A_1(n,r).
\end{equation}

Now we obtain the orthogonal version of Proposition~\ref{prop.sympyoung}.

\begin{prop}\label{prop.orthyoung}\
\begin{enumerate}[{\rm(i)}]
\item If $n=0$ in $k$ and $t=0$, assume $r\ge4$. Then we have
$$g_1({\rm Ind}^{B_r}_{k\Sym_t} V)\cong g(V)$$
as $\SO_n$-modules, for every $k\Sym_t$-module $V$.
\item Let $\lambda\in\Lambda^+_0(m,r)$. If $\lambda=\emptyset$ and $m=0$ in $k$, then assume $r\ge4$. Then $g_1({\mc M}(\lambda))\cong S^\lambda E$ and $g_1(\widetilde{\mc M}(\lambda))\cong \bigwedge{\Nts}^\lambda E$.
\item Let $\lambda\in\Lambda^+_0(m,r)$. The $\SO_n$-module $S^\lambda E$ has a unique indecomposable summand $J(\lambda)$ in which $\nabla_1(\lambda)$ has filtration multiplicity $>0$ and this multiplicity is equal to $1$. Every summand of ${\mc M}(\lambda)$ has a Specht filtration and $f_1(J(\lambda))\cong{\mc Y}(\lambda)$.
\end{enumerate}
\end{prop}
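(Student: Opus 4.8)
The plan is to transcribe the proof of Proposition~\ref{prop.sympyoung} almost verbatim, replacing $\Sp_n$, $f_0$, $g_0$, $\nabla_0$, $\widetilde{\mc M}$, $\widetilde{\mc Y}$ by $\SO_n$, $f_1$, $g_1$, $\nabla_1$, $\mc M$, $\mc Y$. The one structural simplification is that all sign twists disappear: the copy of $\Sym_r$ inside $B_r(n)$ acts on $E^{\otimes r}$ by place permutations without a sign twist (in contrast to the symplectic $B_r(-n)$), so the factors $k_{\rm sg}\otimes(-)$ that decorated the symplectic statements drop out; this is exactly the difference already visible between Theorems~\ref{thm.sympschur}(i) and \ref{thm.orthschur}(i). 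Throughout I will invoke the orthogonal analogues of Lemmas~\ref{lem.dimension}, \ref{lem.diagram}, \ref{lem.surjective}, \ref{lem.tensor}, of the isomorphism \eqref{eq.I_siso}, of Theorem~\ref{thm.sympschur} (namely Theorem~\ref{thm.orthschur}), and of Remarks~\ref{rems.sympschur} 1 and 2, all of which were noted above to carry over unchanged.

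For (i): the orthogonal counterpart of \eqref{eq.I_siso} is a $(B_r,k\Sym_t)$-bimodule isomorphism $I_s\cong\Hom_{\SO_n}(E^{\otimes r},E^{\otimes t})$, so ${\rm Ind}^{B_r}_{k\Sym_t}V\cong\Hom_{\SO_n}(E^{\otimes r},E^{\otimes t})\otimes_{k\Sym_t}V$ and hence $g_1({\rm Ind}^{B_r}_{k\Sym_t}V)\cong\big(E^{\otimes r}\otimes_{B_r}\Hom_{\SO_n}(E^{\otimes r},E^{\otimes t})\big)\otimes_{k\Sym_t}V$. By Lemma~\ref{lem.directsummand} the $\SO_n$-module $E^{\otimes t}$ is a direct summand of $E^{\otimes r}$ when $t\ge1$ or when $n=\dim E\neq0$ in $k$, and $E^{\otimes t}=k$ when $t=0$; in either case the orthogonal version of Lemma~\ref{lem.tensor} identifies the inner tensor product with $E^{\otimes t}$, giving $g_1({\rm Ind}^{B_r}_{k\Sym_t}V)\cong E^{\otimes t}\otimes_{k\Sym_t}V=g(V)$. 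The extra hypothesis $r\ge4$ when $n=0$ in $k$ and $t=0$ is precisely what licenses the $M=k$ clause of that lemma. For (ii) I would apply (i) with $t=|\lambda|$ to $V=M(\lambda)$ and to $V=k_{\rm sg}\otimes M(\lambda)$ and use the routine identifications $g(M(\lambda))\cong S^\lambda E$ and, since ${\rm char}\,k\neq2$, $g(k_{\rm sg}\otimes M(\lambda))\cong\bigwedge{\Nts}^\lambda E$.

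For (iii), put $t=|\lambda|$, so $r-t=2s$ is even. From the orthogonal analogue of Lemma~\ref{lem.diagram}(ii) one checks that if $\nabla_1(\nu)$ has nonzero $\SO_n$-filtration multiplicity in the restriction of a Weyl module $\nabla(\mu)$ of $\GL_n$ with $\mu$ a partition, then either $\nu=\mu$ with multiplicity $1$, or $|\nu|<|\mu|$; since $\nabla(\lambda)$ occurs once in a good filtration of $S^\lambda E$, the filtration multiplicity of $\nabla_1(\lambda)$ in $S^\lambda E$ is $1$, and as a direct summand of a module with a good filtration has a good filtration, Krull--Schmidt yields the unique indecomposable summand $J(\lambda)$, proving the first assertion. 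If $\lambda=\emptyset$ then $S^\lambda E=k$, $Z_{r/2}=I_{r/2}$ and $\mc S(\emptyset)=\mc M(\emptyset)=\mc Y(\emptyset)=I_{r/2}$ by \cite[Cor.~3.2]{HarPag}, and the remaining claims are clear; so assume $\lambda\neq\emptyset$. Then (ii) together with Theorem~\ref{thm.orthschur}(i) gives $g_1(f_1(M))\cong M$ canonically for summands $M$ of $S^\lambda E$ and $f_1(g_1(V))\cong V$ canonically for summands $V$ of $\mc M(\lambda)$; as $f_1$ carries good filtrations to Specht filtrations, every summand of $\mc M(\lambda)=f_1(S^\lambda E)$ has a Specht filtration. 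Finally, let $I(\lambda)\subseteq S^\lambda E$ be the $S(n,t)$-injective hull of $\nabla(\lambda)$, so $f(I(\lambda))=Y(\lambda)$ by \cite[3.6]{Don6}, and put $\pi=\{\mu\in\Lambda^+_0(m,r):|\mu|<t\}$; by the orthogonal versions of Remarks~\ref{rems.sympschur} 1 and 2, $f_1(I(\lambda)/O_\pi(I(\lambda)))\cong Z_s\otimes_{k\Sym_t}Y(\lambda)$, which is indecomposable by \cite[Prop.~3.1]{HarPag}, whence $I(\lambda)/O_\pi(I(\lambda))$ is indecomposable; in a decomposition of $I(\lambda)$ into indecomposables exactly one summand, necessarily $\cong J(\lambda)$, survives modulo $O_\pi$, and since the kernel of $J(\lambda)\to J(\lambda)/O_\pi(J(\lambda))$ has a good filtration, $f_1(J(\lambda))$ surjects onto $Z_s\otimes_{k\Sym_t}Y(\lambda)$; being an indecomposable summand of $f_1(S^\lambda E)=\mc M(\lambda)$, it is $\mc Y(\lambda)$ by definition.

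The work here is bookkeeping rather than new mathematics; the one point needing care is checking that the extra dominant weights of the even special orthogonal group never intervene, which is guaranteed because the running hypothesis $n>2r$ forces every partition in sight to have length strictly less than $m$, so all occurring weights are genuine partitions and the symplectic arguments transcribe without change.
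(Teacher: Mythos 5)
Your proposal is correct and is exactly what the paper intends: Proposition~\ref{prop.orthyoung} is stated without a written proof precisely because it is obtained by transcribing the proof of Proposition~\ref{prop.sympyoung}, using the orthogonal analogues of Lemmas~\ref{lem.directsummand}, \ref{lem.diagram} and \ref{lem.tensor}, of \eqref{eq.I_siso}, of Remarks~\ref{rems.sympschur}~1 and~2, and Theorem~\ref{thm.orthschur} in place of Theorem~\ref{thm.sympschur}, with the sign twists removed. Your closing remark about the extra dominant weights of $\SO_n$ for $n$ even is also the right point to check, and the hypothesis $n>2r$ does indeed force $l(\lambda)\le r<m$ there, so all weights involved are partitions and the argument transcribes without change.
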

We finally note that the orthogonal versions of Remarks~\ref{rems.g_0} are also valid. Only Remark~\ref{rems.g_0}.3 needs some modifications in the case that $n$ is odd.

\section{Blocks}\label{s.blocks}
\subsection{The blocks of the Brauer algebra and the symplectic and
orthogonal Schur algebras in characteristic $p$}\label{ss.sameblocks}
In this subsection we assume that the field $k$ is of positive characteristic $p$. Furthermore, $n$ is an integer $\ge2$ and we put $m=\lfloor n/2\rfloor$. Let $\delta$ be an integer. Recall from Section~\ref{s.jantzen} (the paragraph before Proposition~\ref{prop.Wmin}) that the block relation is defined on a labeling set for the irreducibles. Since cell modules of a cellular algebra always belong to one block, we can extend the block relation of $B_r(\delta)$ to an equivalence relation on all of $\Lambda_0^+(r,r)$ (not just the $p$-regular partitions) as follows: $\lambda$ and $\mu$ are in the same block if and only if ${\mc S}(\lambda)$ and ${\mc S}(\mu)$ belong to the same block. Note that if $p>r$, we are only extending the block relation if $r$ is even $\ge2$ and $\delta$ is zero in $k$. In this case we add the empty partition.
Let $\lambda$ be a partition of $t$, $t\le r$ with $r-t=2s$ even. We have $k_{\rm sg}\otimes S(\lambda)\cong S(\lambda')^*$. Since every simple $k\Sym_t$-module is self-dual, we have that $V$ and $V^*$ have the same composition factors (with multiplicities) for every finite dimensional $k\Sym_t$-module $V$. So $k_{\rm sg}\otimes S(\lambda)$ and $S(\lambda')$ are in the same $k\Sym_t$-block. Since the functor $Z_s\otimes_{k\Sym_t}-$ is exact we get that $\widetilde{\mc S}(\lambda)$ and ${\mc S}(\lambda')$ are in the same $B_r(\delta)$-block.

To prove our next result, we need the following basic fact about quasihereditary algebras. For lack of reference we include a proof. For the general theory of quasihereditary algebras we refer to the appendix of \cite{Don7}.

\begin{lem}\label{lem.ringeldual}
Let $S$ be a finite dimensional quasihereditary algebra with partially ordered labeling set $(X,\le)$ for the irreducibles. Let $S'$ be the Ringel dual of $S$ with reversed partial order $\le'=\le^{\rm op}$ on the labeling set $X$ and let $\lambda,\mu\in X$. Then $\lambda$ and $\mu$ are in the same $S$-block if and only if they are in the same $S'$-block.
\end{lem}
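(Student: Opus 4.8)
The statement to prove is Lemma~\ref{lem.ringeldual}: if $S$ is a finite dimensional quasihereditary algebra and $S'$ is its Ringel dual (with the reversed order on the common labeling set $X$), then two weights $\lambda,\mu\in X$ lie in the same $S$-block if and only if they lie in the same $S'$-block. The plan is to translate the block relation on each side into a statement about the indecomposable tilting modules of $S$ and then invoke the standard characterization of the Ringel dual via its tilting modules.

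First I would recall the relevant facts about quasihereditary algebras from the appendix of \cite{Don7}. For each $\lambda\in X$ there is an indecomposable tilting module $T(\lambda)$ (the one with highest weight $\lambda$), and every tilting module is a direct sum of these; moreover $S'$ is by definition $\End_S(T)^{\mathrm{op}}$ for a full tilting module $T=\bigoplus_{\lambda\in X}T(\lambda)$, so that the indecomposable projective $S'$-modules are (up to the order reversal) exactly the $\Hom_S(T,T(\lambda))$. The key point is that, on the labeling set $X$, the block relation of any finite dimensional algebra coincides with the equivalence relation generated by ``$P(\lambda)$ and $P(\mu)$ have a common indecomposable summand after restricting to the radical quotient'' — more usefully, $\lambda\sim\mu$ for $S$ iff $T(\lambda)$ and $T(\mu)$ lie in the same $S$-block, since the block decomposition of $S$ induces a block decomposition of the module category into which tilting modules, being indecomposable, fall into a single block, and conversely a block contains at least one tilting module with each highest weight in that block. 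So the $S$-block relation is the finest equivalence relation on $X$ such that $\lambda\sim\mu$ whenever $T(\lambda)$ and $T(\mu)$ are in the same block, i.e.\ whenever there is a chain of indecomposable tilting modules linking them via nonsplit extensions (equivalently, via nonzero homomorphisms through the category of modules with both a $\Delta$- and a $\nabla$-filtration).

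The crux is then the observation that the block decomposition of $S$ and of $S'=\End_S(T)^{\mathrm{op}}$ correspond under the functor $\Hom_S(T,-)$ restricted to the subcategory of tilting modules. Concretely: $\Hom_S(T,-)$ gives an equivalence from the full additive subcategory of $S$-tilting modules to the full additive subcategory of projective $S'$-modules (this is the Ringel equivalence at the level of the additive hull of the tilting modules), sending $T(\lambda)$ to the indecomposable projective $S'$-module $P'(\lambda)$. Hence $T(\lambda)$ and $T(\mu)$ are linked by a chain of indecomposable tilting summands of a common $\Hom$-space (equivalently lie in the same $S$-block) if and only if $P'(\lambda)$ and $P'(\mu)$ are linked by the corresponding chain, i.e.\ lie in the same $S'$-block. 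Since the $S$-block relation is generated by the former and the $S'$-block relation (read off from the indecomposable projectives $P'(\lambda)$) is generated by the latter, the two equivalence relations on $X$ agree. Note the reversal of the partial order plays no role here: blocks depend only on the algebra, not on the chosen quasihereditary structure, so it is harmless.

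The main obstacle will be making the bridge ``same block $\iff$ tilting modules linked by $\Hom$'' fully rigorous in a way that is symmetric between $S$ and $S'$: one must be careful that a block of $S$ really does contain a tilting module with each prescribed highest weight (true, since $T(\lambda)$ lies in the block of $\lambda$ because $\Delta(\lambda)$ is a quotient of $T(\lambda)$), and that $\Hom_S(T,-)$ does not merge or split blocks when restricted to tilting modules (true, since it is a fully faithful additive functor onto the projective $S'$-modules, and block membership of a projective module is detected by its indecomposable summands). Once these two points are pinned down, the equivalence of the two block relations on $X$ is immediate, and no computation beyond bookkeeping of indecomposable summands is required.
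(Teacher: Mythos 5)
Your overall framework --- identify the $S'$-block relation with the linkage of the indecomposable projectives $P'(\lambda)=\Hom_S(T,T(\lambda))$ and transport everything through the fully faithful functor $\Hom_S(T,-)$ --- is the right setting, and one direction of the lemma does come out of it: since $\Hom_{S'}(P'(\mu),P'(\lambda))\cong\Hom_S(T(\mu),T(\lambda))$, the block relation of $S'$ is generated by $[P'(\lambda):L'(\mu)]=\dim\Hom_{S'}(P'(\mu),P'(\lambda))\neq0$, and a nonzero map between indecomposable modules forces them into the same block, so same $S'$-block implies same $S$-block. The gap is in the converse. At the pivotal ``i.e.''\ you equate ``$T(\lambda)$ and $T(\mu)$ lie in the same $S$-block'' with ``$T(\lambda)$ and $T(\mu)$ are linked by a chain of indecomposable tilting modules with nonzero homomorphisms (or nonsplit extensions) between consecutive terms''. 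Only one implication there is formal: such a chain certainly stays inside a single block. The reverse implication --- that the block relation, which is generated by extensions between simples (equivalently by shared composition factors of indecomposable projectives), can always be realized by a chain passing only through tilting modules --- is precisely the nontrivial half of the lemma, and neither of the two points you propose to ``pin down'' at the end (that each block contains tiltings of its weights, and that $\Hom_S(T,-)$ is fully faithful on tiltings) addresses it. A priori a single block of $S$ could split into several hom-linkage classes of tilting modules, which is exactly the possibility that $S'$ has strictly more blocks than $S$; your argument as written assumes this cannot happen rather than proving it.

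The paper closes this gap with one short extra idea: the double Ringel dual $(S')'$ is Morita equivalent to $S$, so it suffices to prove the single formal implication (same $S'$-block $\Rightarrow$ same $S$-block) for an arbitrary quasihereditary algebra and then apply it once to $S$ and once to $S'$. To complete your proof you would need either this Morita-equivalence step or an independent argument that the block relation of a quasihereditary algebra is generated by $\Hom_S(T(\mu),T(\lambda))\neq0$.
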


\begin{proof}
We have $S'=\End_S(T)^{\rm op}$ for a full tilting module $T$ of $S$. Denote the irreducible, indecomposable projective and indecomposable tilting module with label $\lambda$ by $L(\lambda)$, $P(\lambda)$ and $T(\lambda)$ respectively. The analogues for $S'$ are \lq\lq primed". Recall that we have the canonical functor $F=\Hom_S(T,-):{\rm mod}(S)\to{\rm mod}(S')$. Since $(S')'$ is Morita-equivalent to $S$, it suffices to show that $\lambda$ and $\mu$ are in the same $S$-block if they are in the same $S'$-block. The block relation of $S'$ is generated by the relation $[P'(\lambda):L'(\mu)]\ne0$, so it suffices to show that $[P'(\lambda):L'(\mu)]\ne0$ implies that $\lambda$ and $\mu$ are in the same $S$-block. So assume the former. We have $[P'(\lambda):L'(\mu)]=\dim\Hom_{S'}(P'(\mu),P'(\lambda))$. Since $F(T(\nu))=P'(\nu)$ for every $\nu\in X$, we have that $\Hom_S(T(\mu),T(\lambda))\ne0$, by the isomorphism \cite[Prop.~A4.8(i)]{Don7}. This clearly implies that $\lambda$ and $\mu$ are in the same $S$-block.
\end{proof}

\begin{thm}\label{thm.sameblocks}
Let $\lambda,\mu\in\Lambda_0^+(r,r)$.
\begin{enumerate}[{\rm(i)}]
\item Assume $n$ is even and $m\ge r$. Then $\lambda$ and $\mu$ are in the same $S_0(n,r)$-block if and only if $\lambda'$ and $\mu'$ are in the same $B_r(-n)$-block.
\item Assume $p\ne2$ and $n>2r$. Then $\lambda$ and $\mu$ are in the same $S_1(n,r)$-block if and only if they are in the same $B_r(n)$-block.
\end{enumerate}
\end{thm}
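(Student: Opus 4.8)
For (i), I would argue in detail, (ii) then following by the same method: one uses the orthogonal Schur functor $f_1$, the orthogonal analogue of Proposition~\ref{prop.sympmult}, and the fact established in Section~\ref{s.orthschur} that $S_1(n,r)$ is quasi-hereditary with labelling set $\Lambda_0^+(m,r)$ (for $n$ odd, via its realization as a direct ideal summand of a generalized Schur algebra for $\SO_n$); the hypothesis $p\ne2$ is in force there, and no transposition intervenes because there $f_1(\nabla_1(\lambda))\cong{\mc S}(\lambda)$ is an \emph{untwisted} cell module. For (i): since $\widetilde{\mc S}(\lambda)$ and ${\mc S}(\lambda')$ lie in the same $B_r(-n)$-block (recalled just before the theorem), the claim is equivalent to the statement that $\lambda,\mu\in\Lambda_0^+(r,r)$ lie in the same $S_0(n,r)$-block if and only if $\widetilde{\mc S}(\lambda)$ and $\widetilde{\mc S}(\mu)$ lie in the same $B_r(-n)$-block.

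I would then describe both block relations concretely. As $S_0(n,r)$ is quasi-hereditary with $\Delta_0(\lambda)$ and $\nabla_0(\lambda)$ sharing composition factors, Lemma~\ref{lem.ringeldual} applied to its Ringel dual $S_0'=\End_{\Sp_n}\big(\bigoplus_{\tau\in\Lambda_0^+(r,r)}T_0(\tau)\big)^{\rm op}$ shows that its block relation is generated by the relations $\Hom_{\Sp_n}(T_0(\nu),T_0(\lambda))\ne0$; and since $\dim\Hom_{\Sp_n}(T_0(\nu),T_0(\lambda))=\sum_\sigma(T_0(\nu):\nabla_0(\sigma))(T_0(\lambda):\Delta_0(\sigma))$, such a relation holds precisely when some $\nabla_0(\sigma)$ is a section both of a good filtration of $T_0(\nu)$ and of a Weyl filtration of $T_0(\lambda)$. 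On the other side, $B_r(-n)$ is cellular with cell modules $\widetilde{\mc S}(\lambda)$, $\lambda\in\Lambda_0^+(r,r)$, all of whose composition factors are $\widetilde{\mc D}(\nu)$ with $\nu$ ranging over the set $\Omega$ of $p$-regular partitions labelling the simple $B_r(-n)$-modules, and $\widetilde{\mc S}(\nu)$ has simple head $\widetilde{\mc D}(\nu)$ for $\nu\in\Omega$; so its block relation is generated by: ``$\lambda\sim\nu$ whenever $\nu\in\Omega$ and $[\widetilde{\mc S}(\lambda):\widetilde{\mc D}(\nu)]\ne0$''. Via the equivalence of Proposition~\ref{prop.sympmult}, where $\widetilde{\mc P}(\nu)=f_0(T_0(\nu))$, the $B_r(-n)$-relation is thus governed by $\Hom$ between the tiltings $T_0(\nu)$ with $\nu\in\Omega$, and the $S_0(n,r)$-relation by $\Hom$ between \emph{all} the $T_0(\lambda)$.

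The inclusion ``$B_r(-n)$-linked $\Rightarrow S_0(n,r)$-linked'' is then immediate: by Proposition~\ref{prop.sympmult}(iii) one has $[\widetilde{\mc S}(\lambda):\widetilde{\mc D}(\nu)]=(T_0(\nu):\nabla_0(\lambda))$ for $\nu\in\Omega$, and if this is nonzero then $L_0(\lambda)$ is a composition factor of the indecomposable module $T_0(\nu)$, which lies in the $S_0(n,r)$-block of $\nu$; hence each generating relation of the $B_r(-n)$-block relation forces $\lambda$ and $\nu$ into the same $S_0(n,r)$-block. For the converse I would use two observations. (a) Every $\lambda\in\Lambda_0^+(r,r)$ is $B_r(-n)$-linked to some $\nu(\lambda)\in\Omega$: pick a composition factor $\widetilde{\mc D}(\nu(\lambda))$ of the nonzero cell module $\widetilde{\mc S}(\lambda)$. (b) For $\nu\in\Omega$, applying $f_0$ — exact on good filtrations, with $f_0(\nabla_0(\sigma))\cong\widetilde{\mc S}(\sigma)$ by Theorem~\ref{thm.sympschur}(i) — to a good filtration of $T_0(\nu)$ shows that $\widetilde{\mc P}(\nu)=f_0(T_0(\nu))$ has a twisted Specht filtration whose sections are the $\widetilde{\mc S}(\sigma)$ with $(T_0(\nu):\nabla_0(\sigma))\ne0$; since $\widetilde{\mc P}(\nu)$ is indecomposable, all such $\sigma$ are $B_r(-n)$-linked to $\nu$. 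Combining (a), (b) and the easy inclusion, the converse reduces to: if $\nu,\nu'\in\Omega$ are $S_0(n,r)$-linked, they are $B_r(-n)$-linked. Taking a chain $\rho_0,\dots,\rho_k$ with $\Hom_{\Sp_n}(T_0(\rho_{i-1}),T_0(\rho_i))\ne0$ throughout, consecutive terms share a common $\nabla$-section, so (b) settles the case where every $\rho_i\in\Omega$.

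\textbf{The main obstacle} is to eliminate the $p$-singular intermediate terms $\rho_i$, i.e. to show that for a $p$-singular $c$ the twisted Specht sections of $f_0(T_0(c))$ all lie in a single $B_r(-n)$-block, so that such a $\rho_i$ can be bridged through $\Omega$. I would attack this using the self-duality of the tilting module $T_0(c)$ — whence of $f_0(T_0(c))$ — together with a bookkeeping of its good filtration via Proposition~\ref{prop.sympmult}(iii). This step is the exact analogue, in our setting, of the classical coincidence of the blocks of $S(n,r)$ with those of $k\Sym_r$ for $n\ge r$, and I expect the known argument to carry over with only minor changes; it is where the real work lies, the rest of the proof being formal manipulation of the functors $f_0$, $g_0$ and of Ringel duality.
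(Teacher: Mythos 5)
Your reduction to comparing the $S_0(n,r)$-block relation with the linkage of the twisted cell modules, and your easy direction ($B_r(-n)$-linked $\Rightarrow$ $S_0(n,r)$-linked, via Proposition~\ref{prop.sympmult}(iii) and the indecomposability of $T_0(\nu)$), are both fine and close in spirit to the paper, which handles that direction by realizing $B_r(-n)$ as $eS'e$ inside the Ringel dual $S'$ and invoking Lemma~\ref{lem.ringeldual}. But the converse direction, which you yourself flag as ``the main obstacle,'' is a genuine gap, and the route you sketch for closing it does not work. For a $p$-singular $c$ the module $f_0(T_0(c))$ has no reason to be indecomposable: by Proposition~\ref{prop.sympmult}, $T_0(c)$ is then not even a direct summand of $E^{\otimes r}$, so $f_0(T_0(c))$ is not projective and can split; and for a decomposable module with a twisted Specht filtration the sections need not lie in a single block, which is precisely the statement you would need. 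Self-duality of $T_0(c)$ gives self-duality of $f_0(T_0(c))$ but says nothing about indecomposability, so the analogy with the $S(n,r)$ versus $k\Sym_r$ case does not ``carry over with only minor changes'' along this route.

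The idea you are missing is to abandon tilting modules in this direction and use injectives instead. The block relation of $S_0(n,r)$ is generated by $[\Delta_0(\mu):L_0(\lambda)]\ne0$, and by reciprocity this equals the good-filtration multiplicity $(I_0(\lambda):\nabla_0(\mu))$, where $I_0(\lambda)\subseteq A_0(n,r)$ is the injective hull of $\nabla_0(\lambda)$. Now $f_0$ is exact on good filtrations and sends $\nabla_0(\sigma)$ to $\widetilde{\mc S}(\sigma)$, so $f_0(I_0(\lambda))$ has a twisted Specht filtration containing both $\widetilde{\mc S}(\lambda)$ (as a submodule) and $\widetilde{\mc S}(\mu)$; and, crucially, $f_0(I_0(\lambda))$ \emph{is} indecomposable, because by \eqref{eq.g_0} and Remark~\ref{rems.g_0}.2 one has $g_0(f_0(M))\cong M$ canonically for every injective $S_0(n,r)$-module $M$, so a decomposition of $f_0(I_0(\lambda))$ would decompose $I_0(\lambda)$. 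Since an indecomposable module lies in a single block and each cell module lies in a single block, $\widetilde{\mc S}(\lambda)$ and $\widetilde{\mc S}(\mu)$ are $B_r(-n)$-linked, which is exactly what you needed. This replaces your whole chain argument through $\Omega$ and removes the $p$-singular difficulty entirely.
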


\begin{proof}
(i).\ First assume that $\lambda'$ and $\mu'$ are in the same $B_r(-n)$-block. As we have seen, this means that $\widetilde{\mc S}(\lambda)$ and $\widetilde{\mc S}(\mu)$ are in the same $B_r(-n)$-block. Let $T$ be the full tilting module $\bigoplus_{\nu\in\Lambda_0^+(r,r)}\bigwedge{\nts}^\nu E$ and let $S'=\End_{S_0(n,r)}(T)$ be the Ringel dual of $S_0(n,r)$. Let $F:{\rm mod}(S_0(n,r))\to{\rm mod}(S')$ be as above. Denote the projection of $T$ onto $E^{\otimes r}$ by $e$. Then we have $eS'e=B_r(-n)$ and $f_0(M)=eF(M)$ for every $S_0(n,r)$-module $M$. By Theorem~\ref{thm.sympschur}(i) we have $e\Delta'(\nu)=\widetilde{\mc S}(\nu)$ for all $\nu\in X$. Here we have used that $F(\nabla_0(\nu))=\Delta'(\nu)$, the standard module with label $\nu$ of $S'$. By assumption there exists an indecomposable summand (block) $A$ of $B_r(-n)$ such that $A$ is nonzero on $\widetilde{\mc S}(\lambda)$ and $\widetilde{\mc S}(\mu)$. Let $R$ be the indecomposable summand of $S'$ such that $A\subseteq eRe$. Then $R$ is nonzero on $\Delta'(\lambda)$ and $\Delta'(\mu)$. It follows that $\lambda$ and $\mu$ are in the same $S'$-block and therefore also in the same $S_0(n,r)$-block, by Lemma~\ref{lem.ringeldual}.

Now we will show that $\lambda'$ and $\mu'$ are in the same $B_r(-n)$-block if $\lambda$ and $\mu$ are in the same $S_0(n,r)$-block. Since the relation $[\Delta_0(\mu):L_0(\lambda)]\ne0$ generates the block relation of $S_0(n,r)$, we may assume that $[\Delta_0(\mu):L_0(\lambda)]\ne0$. Then $(I_0(\lambda):\nabla_0(\mu))=[\Delta_0(\mu):L_0(\lambda)]\ne0$; see e.g. \cite[Prop. A2.2]{Don7}. Here $I_0(\lambda)\subseteq A_0(n,r)$ denotes the $S_0(n,r)$-injective hull of $\nabla_0(\lambda)$. Applying the symplectic Schur functor we obtain that $\widetilde{\mc S}(\mu)$ occurs in a twisted Specht filtration of $f_0(I_0(\lambda))$. By \eqref{eq.g_0} we have that $f_0(I_0(\lambda))$ is indecomposable; see also Remark~\ref{rems.g_0}. Since, clearly, $\widetilde{\mc S}(\lambda)$ is a submodule of $f_0(I_0(\lambda))$, we get that $\widetilde{\mc S}(\lambda)$ and $\widetilde{\mc S}(\mu)$ are in the same $B_r(-n)$-block. As we have seen, this means that $\lambda'$ and $\mu'$ are in the same $B_r(-n)$-block.

The proof of (ii) is completely analogous. Here we use Theorem~\ref{thm.orthschur}(i) and \eqref{eq.g_1} instead of Theorem~\ref{thm.sympschur}(i) and \eqref{eq.g_0}.
\end{proof}

In the corollaries below the star action of $W(D_r)$ is defined as in Section~\ref{s.jantzen}. The definitions of $\hat\rho$ and the star action given there make sense for any integer $\delta$.

\begin{corgl}\label{cor.brauerblocks}
Let $\delta$ be an integer. Assume that $p\ne2$ and that $|\delta|+2r<p/2$. Let $\lambda,\mu\in\Lambda_0^+(r,r)$. Then $\lambda$ and $\mu$ are in the same block of $B_r(\delta)$ if and only if $\lambda'$ and $\mu'$ are conjugate under the star action of $W(D_r)$.
\end{corgl}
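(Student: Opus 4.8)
The plan is to deduce this from Theorem~\ref{thm.sameblocks}(i) and Theorem~\ref{thm.sympschurblocks} after a characteristic-$p$ reduction of $B_r(\delta)$ to a Brauer algebra of the form $B_r(-n)$ with $n=2m$ even and $m\ge r$.

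First I would choose the auxiliary integer $n$. Since $p\ne2$ is an odd prime, one can pick an even integer $n$ with $n\equiv-\delta\pmod p$ and $n\ge2r$; put $m=n/2$, so $m\ge r$. Because $\mathrm{char}\,k=p$ and $n+\delta\equiv0\pmod p$, we have $\delta=-n$ in $k$, hence $B_r(\delta)=B_r(-n)$ as $k$-algebras, and the two have the same blocks. Setting $u=(n+\delta)/p\in\mb Z$ we get $n-up=-\delta$, and $|n-up|=|\delta|<p/2$, so $u$ is the unique integer with $-p/2<n-up<p/2$. Thus the integer denoted $\delta$ in Section~\ref{s.jantzen} for this $n$ is exactly our $\delta$, and consequently $\hat\rho$ and the star action of $W(D_r)$ built from $n$ in Section~\ref{s.jantzen} are precisely the ones in the statement of the corollary. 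Note also that $\Lambda_0^+(r,r)$ is stable under transposition: if $\lambda\in\Lambda_0^+(r,r)$ then $|\lambda'|=|\lambda|\le r$ with $r-|\lambda'|$ even, $l(\lambda')=\lambda_1\le|\lambda|\le r$ and $l(\lambda)\le r$, so $\lambda'\in\Lambda_0^+(r,r)$; hence the assertion about $\lambda',\mu'$ makes sense.

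Now I would chain the two results. Applying Theorem~\ref{thm.sameblocks}(i) to the partitions $\lambda',\mu'$ (and using $(\lambda')'=\lambda$, $(\mu')'=\mu$) shows that $\lambda'$ and $\mu'$ are in the same $S_0(n,r)$-block if and only if $\lambda$ and $\mu$ are in the same $B_r(-n)$-block. On the other hand, the hypotheses $m\ge r$ and $|\delta|+2r<p/2$ of Theorem~\ref{thm.sympschurblocks} hold, so $\lambda'$ and $\mu'$ are in the same $S_0(n,r)$-block if and only if they are conjugate under the star action of $W(D_r)$. Combining the two equivalences with $B_r(\delta)=B_r(-n)$ gives the corollary.

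I do not expect a genuine obstacle here: the substance is entirely contained in Theorems~\ref{thm.sameblocks}(i) and \ref{thm.sympschurblocks}. The only point needing a little care is the first step — checking that $n$ can be chosen so that, simultaneously, $B_r(\delta)=B_r(-n)$ holds in characteristic $p$, $m=n/2\ge r$, and the data ($\delta$, $\hat\rho$, star action) that Section~\ref{s.jantzen} attaches to $n$ coincide with the data in the corollary. This uses that $p$ is odd (to control the parity of $n$) and that $|\delta|<p/2$ (to pin down $u$).
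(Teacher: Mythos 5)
Your proposal is correct and is essentially the paper's proof: the paper likewise chooses an integer $u$ with $\delta-up=-2m=-n$, $m\ge r$ (equivalently your choice of an even $n\equiv-\delta\pmod p$ with $n\ge 2r$), and then chains Theorem~\ref{thm.sameblocks}(i) with Theorem~\ref{thm.sympschurblocks}. The extra care you take — verifying that the Section~\ref{s.jantzen} data ($u$, $\delta$, $\hat\rho$, star action) attached to $n$ agree with those in the corollary, and that $\Lambda_0^+(r,r)$ is closed under transposition — is exactly the bookkeeping the paper leaves implicit.
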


\begin{proof}
Choose and integer $u$ such that $\delta-up=-2m$, where $m\ge r$. Now apply Theorem~\ref{thm.sameblocks}(i) and Theorem~\ref{thm.sympschurblocks}.
\end{proof}

\begin{corgl}
Let $\delta$ be the unique integer with $|\delta|<p/2$ and $n-\delta\in p\mb Z$. Assume that $p\ne2$, that $n>2r$ and that $|\delta|+2r<p/2$. Let $\lambda,\mu\in\Lambda_0^+(r,r)$. Then $\lambda$ and $\mu$ are in the same block of $S_1(n,r)$ if and only if $\lambda'$ and $\mu'$ are conjugate under the star action of $W(D_r)$.
\end{corgl}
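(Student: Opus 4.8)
The plan is to reduce this corollary to the previously established results by the same reduction-mod-$p$ argument that was used in Corollary~\ref{cor.brauerblocks}. First I would observe that the hypotheses $p\ne 2$, $n>2r$ and $|\delta|+2r<p/2$, together with $n-\delta\in p\mb Z$, put us exactly in the situation where Theorem~\ref{thm.sameblocks}(ii) applies: $\lambda$ and $\mu$ are in the same $S_1(n,r)$-block if and only if they are in the same $B_r(n)$-block. So the orthogonal Schur algebra part of the statement is handled by transporting the question to the Brauer algebra $B_r(n)$.

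Next I would invoke the key point about the Brauer algebra, namely that its isomorphism type depends only on the image of the parameter in $k$, hence only on $\delta\bmod p$: since $n\equiv\delta\pmod p$, we have $B_r(n)=B_r(\delta)$ as algebras (and, being cellular with the same cell modules, with the same block relation on $\Lambda_0^+(r,r)$ in the extended sense of Section~\ref{s.blocks}). Now choose an integer $u$ with $\delta-up=-2m$ and $m\ge r$ — this is possible precisely because $|\delta|<p/2$ guarantees $\delta$ is represented by an integer, and subtracting a large multiple of $p$ makes $\delta-up$ a negative even integer $-2m$ with $m\ge r$, without changing the Brauer algebra. Then $B_r(\delta)=B_r(-2m)=B_r(-n')$ where $n'=2m$, and Theorem~\ref{thm.sameblocks}(i) identifies its blocks with those of the symplectic Schur algebra $S_0(n',r)$ under transposition: $\lambda'$ and $\mu'$ are in the same $B_r(-n')$-block iff $\lambda$ and $\mu$ are in the same $S_0(n',r)$-block. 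Finally, since $|\delta|+2r<p/2$ and $m\ge r$, Theorem~\ref{thm.sympschurblocks} tells us that $\lambda$ and $\mu$ are in the same $S_0(n',r)$-block if and only if they are conjugate under the star action of $W(D_r)$.

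Chaining these equivalences: $\lambda,\mu$ in the same $S_1(n,r)$-block $\iff$ same $B_r(n)$-block $\iff$ same $B_r(\delta)$-block $\iff$ same $B_r(-n')$-block $\iff$ $\lambda',\mu'$ in the same $S_0(n',r)$-block $\iff$ $\lambda',\mu'$ conjugate under the star action of $W(D_r)$. One small bookkeeping point I would check carefully is that the star action used in Theorem~\ref{thm.sympschurblocks} (defined from $\hat\rho$ built out of the parameter $\delta$ of the Brauer algebra, i.e. from $-n'$ after the shift) agrees with the star action in the statement (defined from the original $\delta$ with $n-\delta\in p\mb Z$): but $\hat\rho$ depends on $\delta$ only through $\delta/2$, and shifting $\delta$ by a multiple of $p$ shifts each coordinate of $\hat\rho$ by the same multiple of $p/2$, which is absorbed into the translation part of the star action and does not affect conjugacy classes — exactly as in the proof of Corollary~\ref{cor.brauerblocks}. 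The main obstacle is thus not any new mathematics but making sure the various normalizations of $\delta$, $n$, $n'=2m$ and $\hat\rho$ are consistently matched across the three cited theorems; once that is checked the corollary follows immediately.
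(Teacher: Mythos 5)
Your argument is correct and is essentially the paper's: the paper simply cites Theorem~\ref{thm.sameblocks}(ii) together with the preceding corollary, whose proof is exactly the chain $B_r(n)=B_r(\delta)=B_r(-2m)$, Theorem~\ref{thm.sameblocks}(i), Theorem~\ref{thm.sympschurblocks} that you unwind. Your final bookkeeping worry is moot --- with $u$ chosen so that $\delta-up=-2m$, the parameter $-(2m-up)$ entering Section~\ref{s.jantzen} is literally the original $\delta$, so $\hat\rho$ is unchanged and nothing needs to be ``absorbed'' (which is fortunate, since translating $\hat\rho$ by a constant vector would in general change the orbits of the finite group $W(D_r)$ under the star action, unlike for the affine group).
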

\setcounter{corgl}{0}
\begin{proof}
This follows immediately from Theorem~\ref{thm.sameblocks}(ii) and the preceding corollary.
\end{proof}

\begin{rems}
1.\ Assume $p\ne2$. Let $\delta$ be any integer and let $r$ be an integer $\ge 0$. Let $W_p(D_r)$ be the affine Weyl group of type $D_r$. One can define the star action of $W_p(D_r)$ in the same way as for $W(D_r)\subseteq W(C_r)$. Cox, De Visscher and Martin \cite[Cor.~6.3]{CdVM2} obtained the following linkage principle. Let $\lambda,\mu\in\Lambda_0^+(r,r)$. Then $\lambda'$ and $\mu'$ are conjugate under the star action of $W_p(D_r)$ if $\lambda$ and $\mu$ are in the same $B_r(\delta)$-block. From Theorem~\ref{thm.sameblocks}(i) we now deduce that for $n=2m\ge 2r$, $\lambda$ and $\mu$ are conjugate under the star action of $W_p(D_r)$ if they are in the same $S_0(n,r)$-block; here $\delta$ is the unique integer with $|\delta|<p/2$ and $-n-\delta\in p\mb Z$. Similarly, we deduce that for $n>2r$, $\lambda'$ and $\mu'$ are conjugate under the star action of $W_p(D_r)$ if $\lambda$ and $\mu$ are in the same $S_1(n,r)$-block; here $\delta$ is the unique integer with $|\delta|<p/2$ and $n-\delta\in p\mb Z$.

From the linkage principles for the symplectic and orthogonal group we could deduce similar, but weaker linkage principles.\\
2.\ Assume that $p>r$. Then the Schur algebra $S(n,r)$ is semisimple. Now assume further that $n=2m$ is even. Recall that $\bigoplus_{\lambda\in\Lambda_0^+(m,r)}\bigwedge{\nts}^\lambda E$ is a full tilting module for $S_0(n,r)$. The natural epimorphisms $E^{\otimes t}\to \bigwedge{\nts}^\lambda E$ are split, since they are $S(n,r)$-epimorphisms. From Lemma~\ref{lem.directsummand} we now deduce that $E^{\otimes r}$ is a full tilting module if $n\ne0$ in $k$ and that, in general, $E^{\otimes r}\oplus k$ is a full tilting module. In particular, if $n=2m$, $p>r$ and $n\ne0$ in $k$, then the Brauer algebra $B_r(-n)$ is the Ringel dual of the symplectic Schur algebra $S_0(n,r)$. Similar remarks apply to $B_r(n)$ and orthogonal Schur algebra $S_1(n,r)$.
\end{rems}

\subsection{Generalities on reduction mod $p$}\label{ss.redmodp}
We shall need that, for a fixed integer $\delta$, the blocks of the Brauer algebra over a field of characteristic zero \lq\lq agree"
with the blocks over a field of large prime characteristic.   In this subsection we
recall the general reduction argument. The notation used here is completely independent from that in the rest of the paper. Let $R$ be a Dedekind domain with field of fractions $K$. We fix a finite dimensional $K$-algebra $A$ and an order $\Lambda$ in $A$. Thus $\Lambda$ is an $R$-subalgebra of $A$ which is finitely generated as an $R$-module and the $K$-span of $\Lambda$ is $A$. We assume that $A$ is split, in the sense that $\End_A(V)=K$
for every irreducible $A$-module $V$. We will show that the separation
into blocks of the irreducible modules over $K$ agrees with that of $\Lambda/M\Lambda$, for all but finitely many maximal ideals $M$ of $R$.

By a lattice we mean a $\Lambda$-module that is finitely generated and torsion free as an $R$-module. If $V$ is an $A$-module of finite $K$-dimension we shall say that a $\Lambda$-submodule $L$ of $V$ is a
(full) lattice in $V$ if $L$ is finitely generated over $R$ and the $K$-span of $L$ is $V$.

We write $\Max(R)$ for the set of maximal ideals of $R$. For $M\in \Max(R)$ and $F=R/M$ we have the finite dimensional $F$-algebra $\Lambda_F=F\otimes_R \Lambda$. If $L$ is a lattice over $\Lambda$ then we obtain a finite dimensional $\Lambda_F$-module $L_F=F\otimes_R L$ by base change.

We will use that an exact sequence $0\to L_1\to L_2\to L_3\to0$ of $R$-modules splits if $L_3$ is finitely generated and torsion free. This follows immediately from the fact that a finitely generated torsion free module over a Dedekind ring is projective.

For a finite dimensional algebra $S$ we write $\Grot(S)$ for the Grothendieck group of finitely generated left $S$-modules. We write $[V]$ for the  class in $\Grot(S)$ of a finitely generated
$S$-module $V$.     Recall that if $V$ is  a finite dimensional
$A$-module  and $L$ is a lattice in $V$ then the class $[L_F]$ is independent
of the choice of the lattice $L$. (See, for example, the argument of \S15.1, Th\'eor\`eme 32 of \cite{Serre1}.)
We have the decomposition homomorphism $\Grot(A)\to\Grot(\Lambda_F)$, taking $[V]$ to $[L_F]$.

We now label the set of maximal ideals $M_t$, $t\in T$ (with $M_s\neq M_t$ for $s\neq t$). We set
$F_t=R/M_t$ and $A_t=\Lambda_{F_t}$,    for $t\in T$.
We fix a complete set of pairwise irreducible $A$-modules $V_1,\ldots,V_n$ and choose corresponding lattices  $L_1,\ldots,L_n$ in these modules. Let $d_i=\dim V_i$, $1\leq i\leq n$.

We note the following.
\bigskip

{(1)\quad\sl For all but finitely many $t\in T$, the algebra $\Lambda_{F_t}$ is split and \\
$L_{1,F_t},\ldots,L_{n,F_t}$ is a complete set of irreducible pairwise non-isomorphic\\  $A_t$-modules.  \bigskip}

First suppose  $A$ is semisimple.  Let $e(i)$ be the central idempotent that acts as the identity on $V_i$ and as $0$ on $V_j$, for $j\neq i$. We have the orthogonal decomposition $1=e(1)+\cdots+e(n)$  of $1\in A$ as a sum of centrally primitive idempotents.

For $1\leq k\leq n$, the algebra  $e(k)A$ is a $d_k\times d_k$ matrix algebra. We choose a total matrix basis $e(k)_{ij}$,  $1\leq i,j\leq d_k$, of $e(k)A$, for $1\leq k\leq m$.  For some $0\neq g \in R$, we have $ge(k)_{ij}\in\Lambda$ for all $k,i,j$.  Now $g$ is contained in only finitely many maximal ideals. If $M_t$ is a maximal ideal not containing $g$ and $ \overline{g}=g+M$ then defining elements $f(k)=\overline{g}^{-1}(1\otimes ge(k))$ and
$f(k)_{ij}=\overline{g}^{-1}(1\otimes ge(k)_{ij})$,   of $A_t$, we
have an orthogonal idempotent  decomposition $1=f(1)+\cdots+f(n)$ in $A_t$ and a total matrix basis $f(k)_{ij}$, $1\leq i,j\leq n_k$, of
$A_t f(k)$, for $1\leq k\leq n$.   In particular $A_t f(k)$ is a
matrix algebra, $f(k)$ is centrally primitive, $1\leq k\leq n$, and $A_t$ has $n$ pairwise non-isomorphic simple modules.  Now $f(k)$ acts as the identity on $L_{k,F_t}$ and $L_{k,F_t}$ has $F_t$-dimension $d_k$. Hence $L_{k,F}$ is simple and absolutely irreducible  as a $A_t f(k)$-module and hence as a $A_t$-module.  Thus $L_{1,F},\ldots,L_{n,F}$ is a complete set of pairwise non-isomorphic simple $A_t$-modules and $A_t$ is split, for almost all values of $t$.

We now consider the general case. Let $J$ be the Jacobson radical of $A$ and put $I=J\bigcap \Lambda$. Then $I$ acts annihilates  each $V_i$ and hence each $L_i$. Let $t\in T$.  We identify $I_{F_t}=F_t\otimes_R I$ with an ideal of $A_t$ and $\Lambda_{F_t}/I_{F_t}$ with $(\Lambda/I)_{F_t}$.  Then $I_{F_t}$ is a nilpotent ideal of $A_t$ and  $I_{F_t}$ acts as $0$ on each $L_{i,F_t}$. By the case already considered, for all but finitely many values of $t$, the modules $L_{1,F_t},\ldots,L_{n,F_t}$ form a compete set of pairwise non-isomorphic simple $\Lambda_{F_t}/I_{F_t}$-modules all of which are absolutely irreducible.  Hence the modules $L_{1,F_t},\ldots,L_{n,F_t}$ form a compete set of pairwise non-isomorphic simple  $A_t$-modules and $A_t$ is split.

\bigskip

 Let $T^0$ be the set of those $t\in T$ such that $\Lambda_{F_t}$ is
split and   $L_{1,F_t},\ldots,L_{n,F_t}$ form  a complete set of
pairwise non-isomorphic
$\Lambda_{F_t}$-modules. For $t\in T^0$ and $1\leq i\leq n$,  we
set $V_{it}=L_{i,F_t}$. Thus $V_{1t},\ldots,V_{nt}$ is a complete set of pairwise non-isomorphic irreducible $A_t$-modules, for $t\in T^0$.
For $1\leq i,j\leq n$ we have the Cartan invariant   $c_{ij}$ of $A$,
i.e., the composition multiplicity of $V_j$ in the projective cover of $V_i$.  For $t\in T^0$, we have the corresponding Cartan invariant $c^t_{ij}$ of $\Lambda_{F_t}$.

\bigskip

{(2) \sl For all but finitely many values of $t\in T^0$ we have $c_{ij}=c^t_{ij}$ for all $1\leq i,j\leq n$ and in particular the modules $V_k$ and $V_l$ belong to the same block if and only if the modules $V_{kt}$ and $V_{lt}$ belong to the same block, for $1\leq k,l\leq n$.}

\bigskip

For $1 \leq i\leq n$ we let $P_i$ be the projective cover of $V_i$.
For $t\in T^0$ we denote by $P_{it}$ the projective cover of
$V_{it}$.   We choose a decomposition of $1\in A$ as an orthogonal sum
of primitive idempotents, $1=\sum_{i=1}^n\sum_{j=1}^{d_i}e_{ij}$ such that $Ae_{ij}$ is isomorphic to $P_i$, for $1\leq i\leq n$, $1\leq j\leq d_i$. We choose $0\neq h\in R$ such that $he_{ij}\in \Lambda$, for all $i,j$.

We define $T^1$ to be  the set of $t\in T^0$ such that $h\not\in M_t$. Thus the set $T^1$ is cofinite in $T$.
We have the lattice $Y_{ij}=h\Lambda e_{ij}$ in $Ae_{ij}$, for all $i,j$, and the lattice $Y=\bigoplus_{ij} Y_{ij}$ in $A$.

For $t\in T^1$, the inclusion $Y\to \Lambda$ induces an isomorphism $Y_{F_t}\to \Lambda_{F_t}$. Thus each $Y_{ij,F_t}=F_t\otimes_R Y_{ij}$ is a non-zero projective $A_{F_t}$-module. Moreover, the number of summands in a $\Lambda_{F_t}$-module decomposition of  the left regular module $\Lambda_{F_t}$ as a direct sum of indecomposable projective modules is $\sum_{i=1}^nd_i^2$. Hence each $Y_{ij,F_t}$ is indecomposable and projective.

We fix $1\leq i\leq n$ and $1\leq j\leq d_i$. Let $W$ be the maximal submodule of $Ae_{ij}$ and let $H=W\bigcap Y_{ij}$. Then $Ae_{ij}/W$ is isomorphic to $V_i$. Hence $Y_{ij}/H$ is isomorphic to a lattice in  $V_i$ and $(Y_{ij}/H)_{F_t}$ is isomorphic to $V_{it}$, by (1). Thus the projective indecomposable $A_t$-module $Y_{ij,F_t}$ has $V_{it}$ as a homomorphic image and hence $Y_{ij,F_t}$ is a projective cover of $V_{it}$, i.e. we have $Y_{ij,F_t}\cong P_{it}$.

By (1) the decomposition map $d_t:\Grot(A)\to \Grot(\Lambda_{F_t})$ is an isomorphism, taking $[V_i]$ to $[V_{it}]$,
$1\leq i\leq n$.   Moreover,   $d_t$ takes $[P_i]$ to $[Y_{ij,F_t}]=
[P_{it}]$.  Now we have $[P_i]=\sum_{j=1}^n c_{ij}[V_j]$ and applying $d_t$ we obtain $[P_{it}]=\sum_{j=1}^n c_{ij} [V_{jt}]$, which shows that $c_{ij}=c^t_{ij}$, for all $1\leq i,j\leq n$ and $t\in T^1$.

\subsection{The blocks of the Brauer algebra in characteristic $0$}\label{ss.CdVM}
In this final subsection we give a Lie theoretic proof of the block result of Cox, De Visscher and Martin. We assume that ${\rm char}\,k=0$. Furthermore $\delta$ is an arbitrary integer, $r$ is an integer $\ge0$ and we define $\hat\rho$ and the star action of the Weyl group $W(D_r)\subseteq W(C_r)$ as in Section~\ref{s.jantzen}. If $r$ is even $\ge2$ and $\delta=0$, we extend the block relation of $B_r(\delta)$ to all of $\Lambda_0^+(r,r)$ as in Subsection~\ref{ss.sameblocks}. We will apply the general results of Subsection~\ref{ss.redmodp} to the case that $R=\mb Z$, $K=\mb Q$, $A=B_r(\delta)_{\mb Q}$ and $\Lambda=B_r(\delta)_{\mb Z}$.
\begin{thm}[{\cite[Thm.~4.2]{CdVM2}}]
Let $\lambda,\mu\in\Lambda_0^+(r,r)$. Then $\lambda$ and $\mu$ are in the same $B_r(\delta)$-block if and only if $\lambda'$ and $\mu'$ are conjugate under the star action of $W(D_r)$.
\end{thm}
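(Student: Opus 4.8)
The plan is to deduce the characteristic-zero block classification from the positive-characteristic result in Corollary~\ref{cor.brauerblocks} via the reduction-mod-$p$ machinery of Subsection~\ref{ss.redmodp}. The first step is to fix $\lambda,\mu\in\Lambda_0^+(r,r)$ and set $R=\mb Z$, $K=\mb Q$, $A=B_r(\delta)_{\mb Q}$, $\Lambda=B_r(\delta)_{\mb Z}$. One checks that $A$ is split: over $\mb Q$ the Brauer algebra $B_r(\delta)$ is not semisimple in general, but all its irreducible modules are absolutely irreducible, e.g.\ because the cell modules $\mc S(\nu)_{\mb Q}$ are defined over $\mb Q$ and their simple quotients $D(\nu)$ (the $\mc D(\nu)$ in the notation of Subsection~\ref{ss.braueralgebra}, which over $\mb Q$ coincide with the cell modules except in the degenerate case) give a complete set of absolutely irreducible modules. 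So the hypotheses of Subsection~\ref{ss.redmodp} are met, with the index set $T=\Max(\mb Z)$ the set of primes.

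The second step is to invoke fact (2) of Subsection~\ref{ss.redmodp}: for all but finitely many primes $p$, the Cartan invariants of $\Lambda_{\mb F_p}=B_r(\delta)_{\mb F_p}$ coincide with those of $A=B_r(\delta)_{\mb Q}$, and consequently two irreducibles $V_k,V_l$ of $A$ lie in the same $A$-block if and only if the corresponding irreducibles $V_{kp},V_{lp}$ lie in the same $B_r(\delta)_{\mb F_p}$-block. Under the decomposition isomorphism $d_p:\Grot(A)\to\Grot(\Lambda_{\mb F_p})$ the class of the cell module $\mc S(\nu)_{\mb Q}$ goes to that of $\mc S(\nu)_{\mb F_p}$ (the cell module is defined integrally, being $Z_s\otimes_{k\Sym_t}S(\nu)$ with everything defined over $\mb Z$), so the extended block relation on $\Lambda_0^+(r,r)$ — defined in Subsection~\ref{ss.sameblocks} by saying $\lambda\sim\mu$ iff $\mc S(\lambda)$ and $\mc S(\mu)$ are in the same block — transports correctly between $\mb Q$ and $\mb F_p$ for $p$ large. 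Thus for $p$ large enough, $\lambda$ and $\mu$ are in the same $B_r(\delta)_{\mb Q}$-block if and only if they are in the same $B_r(\delta)_{\mb F_p}$-block.

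The third step is to choose such a large prime $p$ with the additional property $|\delta_0|+2r<p/2$, where $\delta_0$ is the unique integer congruent to $\delta$ mod $p$ with $|\delta_0|<p/2$; since $B_r(\delta)_{\mb F_p}=B_r(\delta_0)_{\mb F_p}$ (the parameter only matters through its image in $k$), Corollary~\ref{cor.brauerblocks} applies and gives: $\lambda$ and $\mu$ are in the same $B_r(\delta_0)_{\mb F_p}$-block if and only if $\lambda'$ and $\mu'$ are conjugate under the star action of $W(D_r)$. Here one must be slightly careful that the $\hat\rho$ and the star action are defined using $\delta$, not $\delta_0$, but since $\hat\rho$ mod $p$ determines the residues that enter the combinatorics — and, more to the point, conjugacy of integer $r$-tuples under the star action of $W(D_r)$ depends only on $\delta$ and not on $p$ — one checks that replacing $\delta$ by $\delta_0$ does not change the relation "$\lambda'\sim_\star\mu'$" on $\Lambda_0^+(r,r)$: indeed balancedness (the combinatorial reformulation used in \cite{CdVM1}, \cite{CdVM2}) is a statement purely about $\delta\in\mb Z$. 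Chaining the three equivalences yields the theorem.

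The main obstacle I expect is the bookkeeping in the third step: making sure that the star action of $W(D_r)$ with parameter $\delta$ and the one with parameter $\delta_0=\delta-up$ induce the same equivalence relation on $\Lambda_0^+(r,r)$. The cleanest way around this is to observe that $w\star_\delta x - w\star_{\delta_0} x$ is, for $w\in W(C_r)$, a constant vector whose value depends only on the number of sign changes in $w$; for $w\in W(D_r)$ (an even number of sign changes) and under the extra hypothesis $|\delta_0|+2r<p/2$ one argues as in the proof of Theorem~\ref{thm.sumformula} (Lemma~\ref{lem.e_i+e_j}) that the only relevant reflections shifting $|x|$ are the short-root ones coming with $l=u$, so that conjugacy over $\mb Z$ under $\star_\delta$ restricted to partitions with $l(\lambda),l(\lambda')\le r$ matches conjugacy under $\star_{\delta_0}$. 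Granting this, the proof is a short assembly of Corollary~\ref{cor.brauerblocks} and the reduction-mod-$p$ generalities, exactly as announced in the introduction.
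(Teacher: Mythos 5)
Your proposal is correct and follows essentially the same route as the paper: pass to $\mb Q$ using splitness of $B_r(\delta)_{\mb Q}$, invoke facts (1) and (2) of Subsection~\ref{ss.redmodp} to find a large prime at which the irreducibles, Cartan invariants and hence blocks agree (with the cell modules and their heads reducing compatibly, so that the extended block relation on all of $\Lambda_0^+(r,r)$ transports), and then apply Corollary~\ref{cor.brauerblocks}. Your third step is an unnecessary detour: once $p>2(|\delta|+2r)$ the integer $\delta$ itself satisfies $|\delta|<p/2$, so $\delta_0=\delta$ and no comparison of the star actions for $\delta$ and $\delta_0$ is needed (and, incidentally, your parenthetical claim that over $\mb Q$ the simple heads $\mc D(\nu)$ coincide with the cell modules outside the degenerate case is false in general --- splitness instead follows from the general theory of cellular algebras).
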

\begin{proof}
We have $B_r(\delta)=k\otimes_{\mb Q}B_r(\delta)_{\mb Q}$. Since $B_r(\delta)_{\mb Q}$ is split, i.e. all irreducibles are absolutely irreducible. 
From this one deduces easily that the block relation of $B_r(\delta)$ is the same as that of $B_r(\delta)_{\mb Q}$. The same holds for $B_r(\delta)_{\mb F_p}$ and $B_r(\delta)_{\ov{\mb F}_p}$, where $p$ is any prime and $\ov{\mb F}_p$ is the algebraic closure of the prime field $\mb F_p$. By (1) and (2) in Subsection~\ref{ss.redmodp} we can choose a prime $p>2(|\delta|+2r)$ such that the irreducibles of $B_r(\delta)_{\mb F_p}$ are the reductions mod $p$ of the irreducibles of $B_r(\delta)_{\mb Q}$ and such that both algebras have the same block relation. Note that ${\mc S}(\lambda)_{\mb F_p}$ is a reduction mod $p$ of ${\mc S}(\lambda)_{\mb Q}$. One easily checks that ${\mc D}(\lambda)_{\mb F_p}:={\rm hd}\,{\mc S}(\lambda)_{\mb F_p}$ is the reduction mod $p$ of ${\mc D}(\lambda)_{\mb Q}$, $\lambda\ne\emptyset$ in case $r$ is even $\ge2$ and $\delta=0$. Since, by Subsection~\ref{ss.redmodp}(1), the decomposition homomorphism $\Grot(B_r(\delta)_{\mb Q})\to \Grot(B_r(\delta)_{\mb F_p})$ is an isomorphism, we have that  ${\mc D}(\lambda)_{\mb F_p}$ is a composition factor of ${\mc S}(\mu)_{\mb F_p}$ if and only if ${\mc D}(\lambda)_{\mb Q}$ is a composition factor of ${\mc S}(\mu)_{\mb Q}$. The result now follows immediately from Corollary~\ref{cor.brauerblocks} of Theorem~\ref{thm.sameblocks}.
\end{proof}

\begin{remnn}
Since the Brauer algebra is cellular over $\mb Z$, one can actually deduce equality of the block relations of $B_r(\delta)_{\mb F_p}$ and $B_r(\delta)_{\mb Q}$ whenever the irreducibles of $B_r(\delta)_{\mb F_p}$ are the reductions mod $p$ of the irreducibles of $B_r(\delta)_{\mb Q}$.
\end{remnn}

\noindent{\it Acknowledgement}. We would like to thank A.~Cox for mentioning Proposition~\ref{prop.notinFp} to us. Furthermore, we acknowledge funding from a research grant from The Leverhulme Trust and the second author acknowledges funding from EPSRC Grant EP/C542150/1.

\bigskip

{\sc\noindent Department of Mathematics,
University of York,
Heslington, York, UK, YO10~5DD.
{\it E-mail addresses : }{\tt sd510@york.ac.uk, rht502@york.ac.uk}
}


\begin{thebibliography}{99}
\bibitem{AdRyb} A.\ M.\ Adamovich, G.\ L.\ Rybnikov, {\it Tilting modules for classical groups and Howe duality in positive characteristic}, Transform. Groups {\bf 1} (1996), no. 1-2, 1-34.
\bibitem{ARS} M.\ Auslander, I.\ Reiten, S.\ Smal{\o}, {\it Representation theory of Artin algebras}, Cambridge Studies in Advanced Mathematics, 36, Cambridge University Press, Cambridge, 1995.
\bibitem{BenCa} D.\ J.\ Benson, J.\ F.\ Carlson, {\it Nilpotent elements in the Green ring}, J. Algebra {\bf 104} (1986), no. 2, 329-350.
\bibitem{Br} R.\ Brauer, {\it On algebras which are connected with the semisimple continuous groups}, Ann. of Math. (2) {\bf 38} (1937), no. 4, 857-872.
\bibitem{Brown1} W.\ P.\ Brown, {\it An algebra related to the orthogonal group}, Michigan Math. J. {\bf 3} (1955), 1-22.
\bibitem{Brown2} W.\ P.\ Brown, {\it The semisimplicity of $\omega\sb f\sp n$}, Ann. of Math. (2) {\bf 63} (1956), 324-335.
\bibitem{Brun} J.\ Brundan, {\it Dense orbits and double cosets}, Algebraic groups and their representations (Cambridge, 1997), 259-274.
\bibitem{Cliff} G.\ Cliff {\it A basis of bideterminants for the coordinate ring of the orthogonal group}, preprint.
\bibitem{CdVM1} A.\ Cox,  M.\ De Visscher, P.\ Martin, {\it The blocks of the Brauer algebra in characteristic zero},  preprint.
\bibitem{CdVM2} A.\ Cox,  M.\ De Visscher, P.\ Martin, {\it A geometric characterisation of the blocks of the Brauer algebra},  preprint.
\bibitem{DeCProc} C.\ De Concini, C.\ Procesi, {\it A characteristic free approach to invariant theory}, Advances in Math. {\bf 21} (1976), no. 3, 330-354.
\bibitem{Dotyetal} R.\ Dipper, S.\ Doty, J.\ Hu, {\it Brauer algbras, symplectic Schur algebras and Schur-Weyl duality}, Trans. Amer. Math. Soc. {\bf 360} (2008), no. 1, 189-213.
\bibitem{Don1} S.\ Donkin, {\it On Schur algebras and related algebras, I}, J. Algebra 104 (1986), no. 2, 310-328.
\bibitem{Don2} \bysame, {\it On Schur algebras and related algebras, II}, J. Algebra 111 (1987), no. 2, 354-364.
\bibitem{Don3} \bysame, {\it Good filtrations of rational modules for reductive groups} in {\it The Arcata Conference on Representations of Finite Groups} (Arcata, Calif., 1986), Proc. Sympos. Pure Math. {\bf 47:1}, Amer. Math. Soc., Providence, RI, 1987, 69-80.
\bibitem{Don4} \bysame, {\it Representations of symplectic groups and the symplectic tableaux of R. C. King}, Linear and Multilinear Algebra {\bf 29} (1991), no. 2, 113-124.
\bibitem{Don5} \bysame, {\it On tilting modules and invariants for algebraic groups}, Finite-dimensional algebras and related topics (Ottawa, ON, 1992), 59-77, NATO Adv. Sci. Inst. Ser. C Math. Phys. Sci. 424, Kluwer Acad. Publ., Dordrecht, 1994.
\bibitem{Don6} \bysame, {\it On tilting modules for algebraic groups}, Math. Z. {\bf 212} (1993), no. 1, 39-60.
\bibitem{Don7} \bysame, {\it The $q$-Schur algebra}, LMS Lecture Note Series 253, Cambridge University Press, Cambridge, 1998
\bibitem{Don8} \bysame, {\it Tilting modules for algebraic groups and finite dimensional algebras}, Handbook of Tilting Theory 215-257, LMS Lecture Note Series 332, Cambridge Univ. Press, Cambridge.
\bibitem{DorHanWal} W.\ F.\ Doran, P.\ Hanlon, D.\ Wales, {\it On the semisimplicity of the Brauer centralizer algebras}, J. Algebra {\bf 211} (1999), no. 2, 647-685.
\bibitem{Doty} S.\ Doty, {\it Polynomial representations, algebraic monoids, and Schur algebras of classical type}, J. Pure Appl. Algebra {\bf 123} (1998), no. 1-3, 165-199.
\bibitem{DotyHu} S.\ Doty and J.\ Hu, {\it Schur-Weyl duality for orthogonal groups}, Preprint.
\bibitem{ErdSa} K.\ Erdmann, C.\ S\'aenz, {\it On standardly stratified algebras}, Comm. Algebra {\bf 31} (2003), no. 7, 3429-3446.
\bibitem{Green} J.\ A.\ Green, {\it Polynomial representations of ${\rm GL}\sb{n}$}, Lecture Notes in Mathematics, {\bf 830}, Springer-Verlag, Berlin-New York, 1980.
\bibitem{HanWal} P.\ Hanlon, D.\ Wales, {\it On the decomposition of Brauer's centralizer algebras}, J. Algebra {\bf 121} (1989), no. 2, 409-445.
\bibitem{HarPag} R.\ Hartmann, R.\ Paget, {\it Young modules and filtration multiplicities for Brauer algebras}, Math. Z. {\bf 254} (2006), no. 2, 333-357.
\bibitem{James} G.\ D.\ James,  {\it The representation theory of the symmetric groups}, Lecture Notes in Mathematics, 682, Springer, Berlin, 1978.
\bibitem{Jan} J.\ C.\ Jantzen, {\it Representations of algebraic groups},
Pure and Applied Math., vol.~131. Academic Press, Boston, 1987.
\bibitem{King} R.\ C.\ King, {\it Weight multiplicities for the classical groups}, Group theoretical methods in physics (Fourth Internat. Colloq., Nijmegen, 1975), Lecture Notes in Phys., Vol. 50, Springer, Berlin, 1976, 490-499.
\bibitem{KingWelsh} R.\ C.\ King, T.\ A.\ Welsh, {\it Construction of orthogonal group modules using tableaux}, Linear and Multilinear Algebra {\bf 33} (1993), no. 3-4, 251-283.
\bibitem{koxi} S.\ K\"onig, C.\ Xi, {\it A characteristic free approach to Brauer algebras}, Trans. Amer. Math. Soc. {\bf 353} (2001), no. 4, 1489-1505.
\bibitem{Mac} I.\ G.\ Macdonald, {\it Symmetric functions and Hall polynomials}, Second edition, Oxford Mathematical Monographs, Oxford Science Publications, Oxford University Press, New York, 1995.
\bibitem{MaWo} P.\ Martin, D.\ Woodcock, {\it The partition algebras and a new deformation of the Schur algebras}, J. Algebra {\bf 203} (1998), no. 1, 91-124.
\bibitem{smartin} S.\ Martin, {\it Schur algebras and representation theory}, Cambridge Tracts in Mathematics, 112, Cambridge University Press, Cambridge, 1993.
\bibitem{Oe} S.\ Oehms, {\it Centralizer coalgebras, FRT-construction, and symplectic monoids}, J. Algebra {\bf 244} (2001), no. 1, 19-44.
\bibitem{Rot} J.\ J.\ Rotman, {\it An introduction to homological algebra}, Pure and Applied Mathematics, 85, Academic Press, New York-London, 1979.
\bibitem{Serre1} J-P. Serre, {\it Repr\'esentations lin\'eaires des groupes finis}, Hermann, Paris 1967.
\bibitem{Serre2} J-P. Serre, {\it Semisimplicity and tensor products of group representations: converse theorems}, with an appendix by Walter Feit, J. Algebra \textbf{194} (1997), no. 2, 496-520.
\bibitem{T} R.\ H.\ Tange, {\it The symplectic ideal and a double centraliser theorem}, J.~London Math. Soc. {\bf 77} (2008), no. 3, 687-699.
\bibitem{Wen} H.\ Wenzl, {\it On the structure of Brauer's centralizer algebras}, Ann. of Math. (2) {\bf 128} (1988), no. 1, 173-193.
\bibitem{Weyl} H.\ Weyl, {\it The classical groups, Their invariants and representations}, second edition, Princeton University Press, 1946.
\end{thebibliography}
\end{document}